\numberwithin{equation}{section}
\let\cal\mathcal
\def\Ascr{{\cal A}}
\def\Bscr{{\cal B}}
\def\Cscr{{\cal C}}
\def\Dscr{{\cal D}}
\def\Fscr{{\cal F}}
\def\Oscr{{\cal O}}
\def\Tscr{{\cal T}}
\def\Uscr{{\cal U}}
\def\Vscr{{\cal V}}
\let\blb\mathbb
\def\CC{{\blb C}}
\def \ZZ{{\blb Z}}
\def \NN{{\blb N}}
\def\Dis{\operatorname{Dis}}
\def\id{\text{id}}
\def\Id{\operatorname{id}}
\def\Res{\operatorname{Res}}
\def\Lotimes{\overset{L}{\otimes}}
\def\mod{\operatorname{mod}}
\def\rad{\operatorname {rad}}
\def\PC{\operatorname {PC}}
\def\dis{\operatorname {dis}}
\def\pc{\operatorname {pc}}
\def\Rep{\operatorname {rep}}
\def\GL{\operatorname {GL}}
\def\Ext{\operatorname {Ext}}
\def\End{\operatorname {End}}
\def\Gl{\operatorname {GL}}
\def\im{\operatorname {im}}
\def\coker{\operatorname {coker}}
\def\ker{\operatorname {ker}}
\def\Ker{\operatorname {ker}}
\def\End{\operatorname {End}}
\def\id{{\operatorname {id}}}
\def\r{\rightarrow}
\def\d{\downarrow}
\def\u{\uparrow}
\def\GL{\operatorname {GL}}
\def\cohom{\operatorname{cohom}}
\DeclareMathOperator{\Ind}{Ind}
\DeclareMathOperator{\Aut}{Aut}
\DeclareMathOperator{\eval}{ev}
\DeclareMathOperator{\Ob}{Ob}
\DeclareMathOperator{\Sym}{Sym}
\DeclareMathOperator{\SL}{SL}
\DeclareMathOperator{\addc}{add}
\DeclareMathOperator{\perf}{Perf}
\DeclareMathOperator{\Ab}{Ab}
\DeclareMathOperator{\usl}{\underline{sl}}
\theoremstyle{definition}
\newtheorem{lemma}{Lemma}[section]
\newtheorem{proposition}[lemma]{Proposition}
\newtheorem{theorem}[lemma]{Theorem}
\newtheorem{corollary}[lemma]{Corollary}
\newtheorem{example}[lemma]{Example}
\newtheorem{definition}[lemma]{Definition}
\newtheorem{remark}[lemma]{Remark}
\DeclareMathOperator\derived{\mathbf{D}}
\DeclareMathOperator\Hom{Hom}
\DeclareMathOperator\comod{comod}
\DeclareMathOperator\Vect{vect}
\DeclareMathOperator\VVect{Vect}
\DeclareMathOperator\determinant{det}
\DeclareMathOperator\coend{coend}
\DeclareMathOperator\aut{\underline{aut}}
\DeclareMathOperator\induced{ind}
\DeclareMathOperator{\uaut}{\underline{gl}}
\DeclareMathOperator{\uend}{\underline{end}}
\DeclareMathOperator{\dist}{dis}
\def\perf{\operatorname{perf}}
\def\opp{\operatorname{op}}
\def\cha{\operatorname{char}}
\newcommand*\bigcdot{\mathpalette\bigcdot@{.5}}
\newcommand*\bigcdot@[2]{\mathbin{\vcenter{\hbox{\scalebox{#2}{$\m@th#1\bullet$}}}}}
\mathchardef\mhyphen="2D
\newcommand{\mylabel}[2]{#2\def\@currentlabel{#2}\label{#1}}
\title[The Tannaka-Krein formalism \ldots]{The Tannaka-Krein formalism and (re)presentations of universal quantum groups}
\author{Theo Raedschelders}
\email[Theo Raedschelders]{Theo.Raedschelders@glasgow.ac.uk}
\address{School of Mathematics and Statistics, University of Glasgow, Glasgow G12 8QQ, 
United Kingdom}
\thanks{The first author is supported by EPSRC postdoctoral fellowship EP/R005214/1, and would like to thank Julian K\"ulshammer and Dimitry Leites for valuable comments.}
\author{Michel Van den Bergh}
\email[Michel Van den Bergh]{michel.vandenbergh@uhasselt.be}
\address{Departement WNI, Universiteit Hasselt, Universitaire Campus \\
B-3590 Diepenbeek}
\thanks{The second author is a senior researcher at the Research Foundation Flanders (FWO).  While working on this project he was supported by
the FWO grant G0D8616N: ``Hochschild cohomology and deformation theory of triangulated categories''.}
\let\oldmarginpar\marginpar
\def\marginpar#1{\oldmarginpar{\tiny \raggedright #1}}
\begin{document}

\begin{abstract}
This is a draft version for an extra chapter in the second edition of the book ``Quantum Groups and Noncommutative Geometry" by Yu. I. Manin \cite{manin-second}. We survey our work in \cite{thesis,MR3570144,MR3713007}, placing particular emphasis on the Tannaka-Krein formalism.
\end{abstract}

\maketitle

\tableofcontents

\section{Introduction}
In \cite[\S 13.8]{MR1016381}, Manin discusses the possibility of ``hidden symmetry" in algebraic geometry. He showed that certain universal quantum groups coact on the homogeneous coordinate ring of an (embedded) projective variety, and these quantum groups are typically much larger than the honest automorphism groups of the variety. In fact, Manin's construction works much more generally, and in this survey, we aim to convince the reader that, as long as one starts with a reasonable algebra $A$, the universal bi- and Hopf algebras $\uend(A)$ and $\uaut(A)$ coacting on $A$ are well behaved objects. 

To do so, it seems natural to start by looking at $A=\mathbb{K}[x_1,\ldots,x_n]$, which we think of as a homogeneous coordinate ring for projective space. Our work in \cite{thesis,MR3570144,MR3713007} has focused on the representation theory of the universal Hopf algebra $\uaut(A)$ coacting on $A$ (and on its noncommutative counterparts). We show that the representations are as nice as can be: the category of comodules for $\uaut(A)$ can be given the structure of a highest weight category (see \ref{def:qh}), and it shares many more similarities with the category of rational representations of the general linear group $\GL_n$, or equivalently, the category of comodules over the coordinate Hopf algebra $\Oscr(\GL_n)$. In contrast to $\Oscr(\GL_n)$, however, the universal Hopf algebras $\uaut(A)$ have
rather complicated presentations and are, moreover, of exponential growth. In order to deal with them, we resort to using a different set of techniques, which go by the name of the \textit{Tannaka-Krein formalism}\index{Tannaka-Krein formalism}. 

\section{The Tannaka-Krein formalism}
\label{sec:tk}
\subsection{The basic example}

Consider a finite group $G$. The starting point for the Tannaka-Krein formalism is the basic question: can $G$ be recovered from its category of finite-dimensional complex representations $\Rep_{\CC}(G)$?

As stated, this question is at best unclear, since one needs to specify what structure on the category $\Rep_{\CC}(G)$ is taken into account. Indeed, one can consider $\Rep_{\CC}(G)$ as:
\begin{enumerate}
\item (abelian) category,
\item monoidal category,
\item symmetric monoidal category.
\end{enumerate}

For $(1)$, the answer is no: for any finite group $H$ with the same number of conjugacy classes as $G$, say $n$, there is an equivalence of categories $\Rep_{\CC}(G) \cong \Rep_{\CC}(H)$, since by the Artin-Wedderburn theorem both categories are equivalent to the category $\mod(\CC^n)$.

For $(2)$, the answer is also no, though this is more subtle.
Consider, for example, the two non-abelian groups of order $8$, the dihedral group $D_8$ and the quaternion group $Q_8$. These groups have the same character table, and even isomorphic Grothendieck rings, but one can check that $\Rep_{\CC}(D_8)$ and $\Rep_{\CC}(Q_8)$ are not equivalent as monoidal categories. Nevertheless, there are two non-isomorphic groups $G$ and $H$ of order $64$, both of which arise as semi-direct products of $\ZZ/2\ZZ \times \ZZ/2\ZZ$ and $\ZZ/4\ZZ \times \ZZ/4\ZZ$, such that $\Rep_{\CC}(G) \cong \Rep_{\CC}(H)$ as monoidal categories, see \cite{MR1932664}. Finite groups with monoidally equivalent categories of finite-dimensional complex representations are called \textit{isocategorical}\index{isocategorical} in \cite{MR1810480}. In loc. cit. all groups isocategorical to a given group are classified in terms of group-theoretical data.

For $(3)$, the answer is yes: by \cite[Theorem 3.2 (b)]{MR654325}, the forgetful functor 
\begin{equation}
\label{eq:forgetful}
F:\Rep_{\CC}(G) \to \Vect_{\CC}
\end{equation}
is the unique $\CC$-linear exact faithful symmetric monoidal functor from $\Rep_{\CC}(G)$ to $\Vect_{\CC}$, where $\Vect_{\CC}$ is the category of finite-dimensional $\CC$-vector spaces. Hence, we can assume $F$ is known, and the following proposition then allows one to reconstruct $G$.

\begin{proposition}
\label{prop:reconstruction}
There is an isomorphism of groups
\begin{equation}
G \to \Aut^{\otimes}(F),
\end{equation}
where $\Aut^{\otimes}(F)$ denotes the group of natural isomorphisms of $F$ which are compatible with the tensor product on $\Rep_{\CC}(G)$.
\end{proposition}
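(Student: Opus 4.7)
The plan is to define a map $\phi\colon G \to \Aut^\otimes(F)$ by sending $g$ to the natural transformation whose component at $(V,\rho_V)$ is $\rho_V(g)$, and to show this is a bijection. Naturality of $\phi(g)$ is just the $G$-equivariance of morphisms in $\Rep_{\CC}(G)$, tensor-compatibility is the identity $\rho_{V\otimes W}(g) = \rho_V(g)\otimes \rho_W(g)$, and the homomorphism property is clear. Injectivity is immediate: evaluating $\phi(g)$ on the regular representation $R = \CC[G]$ at the identity vector sends $e$ to $g$, so $\phi(g) = \id$ forces $g=e$.

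Surjectivity is the substantive part. Given $\eta \in \Aut^\otimes(F)$, I would first pin down $\eta_R$. Because right multiplication $r_h\colon R \to R$, $x\mapsto xh$, is $G$-equivariant for the left regular action, naturality forces $\eta_R$ to commute with every $r_h$; equivalently, $\eta_R$ is itself right multiplication by some $\alpha = \sum_{g\in G} a_g\, g \in R$. Tensor-compatibility on the unit object $\CC$ forces $\eta_{\CC} = \id$, and naturality applied to the augmentation $R \twoheadrightarrow \CC$ then yields $\sum_g a_g = 1$.

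The main obstacle, and the only place the monoidal (rather than merely additive) structure is used essentially, is to promote $\alpha$ from a general element of the group algebra to a single group element. For this I would apply naturality to the diagonal $d\colon R \to R \otimes R$, $g\mapsto g\otimes g$, which is $G$-equivariant when $R\otimes R$ carries the diagonal action. Combining naturality with $\eta_{R\otimes R} = \eta_R \otimes \eta_R$ and evaluating at $e$, comparison of coefficients in the basis $\{g'\otimes g''\}$ of $R\otimes R$ yields $a_{g'}a_{g''} = a_{g'}\delta_{g',g''}$. Thus every $a_g$ lies in $\{0,1\}$ with at most one nonzero coefficient, and $\sum_g a_g = 1$ then forces $\alpha = g_0$ for a unique $g_0 \in G$.

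To conclude I would show $\eta_W = \rho_W(g_0)$ on arbitrary $W$ by exploiting the universal property of the regular representation: for each $w\in W$, the map $\phi_w\colon R \to W$, $g\mapsto \rho_W(g)w$, is $G$-equivariant, so naturality gives $\eta_W(w) = \eta_W(\phi_w(e)) = \phi_w(\eta_R(e)) = \phi_w(g_0) = \rho_W(g_0)w$. Hence $\eta = \phi(g_0)$, and $\phi$ is a bijection.
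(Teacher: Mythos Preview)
Your proof is correct and follows essentially the same strategy as the paper: both use the regular representation to identify an arbitrary $\eta\in\Aut^\otimes(F)$ with a single group element. One small slip: an endomorphism of $\CC[G]$ that commutes with all right multiplications $r_h$ is \emph{left} multiplication by $\alpha=\eta_R(e)$, not right multiplication; your subsequent computations (which only ever evaluate at $e$) are unaffected.

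The main difference from the paper is a matter of duality. The paper works with the function algebra $\Oscr(G)$ and uses tensor-compatibility applied to the \emph{multiplication} $\Oscr(G)\otimes\Oscr(G)\to\Oscr(G)$ to show that $\eta_{\Oscr(G)}$ is an algebra automorphism, hence of the form $f\mapsto f(-\cdot g)$ for a unique $g$. You work with the group algebra $\CC[G]$ and use tensor-compatibility applied to the \emph{comultiplication} $d\colon\CC[G]\to\CC[G]\otimes\CC[G]$ to show that $\alpha=\eta_R(e)$ is grouplike. Since $\Oscr(G)$ and $\CC[G]$ are dual Hopf algebras for finite $G$, these are the same argument read through the duality; neither is more general, though your version is perhaps slightly more explicit in that the key coefficient identity $a_{g'}a_{g''}=\delta_{g',g''}a_{g'}$ is written out directly.
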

\begin{proof} 
Every element of $G$ acts via a linear map in every finite-dimensional $G$-representation, so define
\begin{equation}
\phi: G \to \Aut^{\otimes}(F): g \mapsto (\rho(g))_{(V,\rho)},
\end{equation}
where $\rho: G \to \GL(V)$ is a $G$-representation. One easily checks that $\phi$ is a well-defined group morphism. Now assume $\phi(g)=\id_F$, then, in particular, 
\begin{equation}
\rho(g)=\id:\Oscr(G) \to \Oscr(G): \delta_h \mapsto \delta_{gh},
\end{equation}
where $(\Oscr(G),\rho)$ is the representation of $G$ on the  algebra of functions on $G$, and $\delta_h$ denotes the indicator function at $h$. This implies that $g=1$, so $\phi$ is injective.

Surjectivity is harder to check. For a given $\alpha \in \Aut^{\otimes}(F)$, we again look at the representation $(\Oscr(G),\rho)$. In fact, one can show that $\alpha_{(\Oscr(G),\rho)}$ is an algebra morphism, and from there one shows that there is a unique $g \in G$ such that\begin{equation}
\alpha_{(\Oscr(G),\rho)}:\Oscr(G) \to \Oscr(G):f \mapsto f(- \cdot g).
\end{equation}
In other words, $\alpha_{(\Oscr(G),\rho)}=\phi(g)_{(\Oscr(G),\rho)}$. This then suffices to ensure that $\alpha=\phi(g)$. We omit the computational details.
\end{proof}

Already from this basic example, it is clear that the  algebra of functions $\Oscr(G)$ of $G$ plays a crucial role.

\subsection{Tannaka-Krein reconstruction}
\label{ss:TK}
From now on we will work over an arbitrary algebraically closed field $\mathbb{K}$. 

The key ingredient in the reconstruction for finite groups was the existence of the forgetful functor $F$ \eqref{eq:forgetful}. In this section we consider the more general setting of a covariant functor $F:\Ascr\r \Vect_{\mathbb{K}}$, where $\Ascr$ is an
\begin{enumerate}
\item essentially small,
\item $\mathbb{K}$-linear,
\item abelian
\end{enumerate} 
category. To this data we will associate a certain coalgebra serving as a substitute for $\Oscr(G)$ which occurs in the proof of Proposition \ref{prop:reconstruction}. We will, however, start by taking a more intuitive dual point of view.

We first give a brief reminder on pseudo-compact algebras, for more details, see \cite[\S 4]{MR3338683}. For us a \textit{pseudo-compact algebra}\index{pseudo-compact algebra} $A$ is a topological algebra whose topology is generated by 2-sided ideals of finite codimension and
which is, moreover, complete. Denote the category of pseudocompact algebras with continuous algebra morphisms by $\PC_{\mathbb{K}}$.

Similarly, a right linear topological $A$-module $M$ is called a \textit{pseudocompact $A$-module}\index{module! pseudo-compact} if its topology is generated by right $A$-submodules of finite codimension, and $M$ is complete. The corresponding category is denoted $\PC(A)$\index{$\PC(A)$}. It is an abelian category.
We will also need the category $\Dis(A)$\index{$\Dis(A)$} of \textit{discrete $A$-modules}\index{module! discrete}, i.e., the right linear topological $A$-modules equipped with the discrete topology. These categories are dual in the following sense. There are functors 
\begin{equation}
\begin{tikzcd}
\Dis(A) \arrow[bend left]{r}{(-)^*} & \PC(A^{\opp}) \arrow[bend left]{l}{(-)^{\circ}}
\end{tikzcd}
\end{equation}
where $(-)^*$\index{$(-)^*$} denotes taking the vector space dual and $(-)^{\circ}$\index{$(-)^{\circ}$} the continuous dual\footnote{For a pseudocompact $A$-module $M$, $M^{\circ}$ consists of the continuous linear functionals $M \to \mathbb{K}$, where $\mathbb{K}$ has the discrete topology.}. These functors define mutually inverse anti-equivalences of categories.

The pseudocompact algebra associated to $(\Ascr,F)$ is denoted $\End(F)$, and consists of all natural transformations $F \Rightarrow F$, with $\mathbb{K}$-linear structure coming from $\Vect_{\mathbb{K}}$, and multiplication defined via composition of natural transformations. The topology is determined by associating to every finite $\alpha \subset \Ob(\Ascr)$ a base open set
\begin{equation}
U(\alpha)=\bigcap_{X \in \alpha} \Ker(\End(F) \to \End(FX)),
\end{equation}
which is an ideal of finite codimension.

It is clear that for every object $X \in \Ascr$, $FX$ is a  finite-dimensional discrete $\End(F)$-module, so we can consider the evaluation functor
\begin{equation}
\label{eq:eval}
\eval_F:\Ascr \to \dist(\End(F)): X \mapsto FX,
\end{equation}
where $\dist(\End(F))$\index{$\dist(\End(F))$} denotes the category of finite-dimensional discrete $\End(F)$-modules. The following theorem is the quintessential \textit{Tannaka-Krein reconstruction theorem}\index{Tannaka-Krein reconstruction theorem}.

\begin{theorem}\cite{MR0232821,MR0472967}
\label{th:tkreconstruction}
If $F$ is faithful and exact, then 
\begin{equation}
\eval_F:\Ascr \to \dist(\End(F))
\end{equation}
defines an equivalence of categories.
\end{theorem}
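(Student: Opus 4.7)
The plan is to identify the target $\dist(\End(F))$ with a category of comodules and then recognize $\eval_F$ as the standard comparison functor. Set $C := \coend(F) = \End(F)^\circ$, the continuous dual coalgebra. By the $(-)^\circ$ anti-equivalence recalled above, $\dist(\End(F))$ is equivalent to the category $\comod(C)$ of finite-dimensional right $C$-comodules, and $\eval_F$ corresponds to the natural functor $\widetilde F \colon \Ascr \to \comod(C)$ sending $X$ to $FX$ endowed with its canonical $C$-coaction. It suffices to prove that $\widetilde F$ is fully faithful and essentially surjective.

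The technical heart is the following \emph{subobject correspondence}, which I would establish first: for every $Z \in \Ascr$, the assignment $Z' \mapsto FZ'$ is a bijection between subobjects of $Z$ in $\Ascr$ and $C$-subcomodules of $FZ$. A preliminary observation is that faithful exact functors reflect zero objects (if $FX = 0$ then $F(\id_X) = 0 = F(0_X)$, hence $\id_X = 0$) and therefore reflect isomorphisms (apply this to the kernel and cokernel). Injectivity of $Z' \mapsto FZ'$ follows by applying this to the quotients $(Z_1 + Z_2)/Z_i$. For surjectivity, given a stable $V \subseteq FZ$, take $Z' := \sup\{W \subseteq Z : FW \subseteq V\}$, well-defined by essential smallness of $\Ascr$; exactness gives $FZ' \subseteq V$, and the crux is the reverse inclusion, which reduces to showing that every $v \in V$ lies in the image of $F(g)$ for some $g \colon Y \to Z$ in $\Ascr$ whose image under $F$ lands in $V$.

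Granting the subobject correspondence, fullness of $\widetilde F$ follows from the \emph{graph trick}. For an $\End(F)$-equivariant $\phi \colon FX \to FY$, the graph $\Gamma_\phi = \{(v,\phi(v)) : v \in FX\}$ is a stable subspace of $F(X \oplus Y)$ and hence equals $FW$ for a subobject $W \hookrightarrow X \oplus Y$. Applying $F$ to the first projection $\pi \colon W \to X$ yields the bijection $(v,\phi(v)) \mapsto v$; since $F$ reflects isomorphisms, $\pi$ is invertible in $\Ascr$, and $f := p_Y \circ \pi^{-1} \colon X \to Y$ satisfies $F(f) = \phi$. Faithfulness of $\widetilde F$ is immediate from that of $F$.

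For essential surjectivity, let $M \in \dist(\End(F))$. Discreteness forces the action to factor through a finite-codimensional quotient $\End(F)/U(\alpha)$ for some finite $\alpha \subseteq \Ob(\Ascr)$. Setting $X_0 := \bigoplus_{Y \in \alpha} Y$, the finite-dimensional algebra $\End(F)/U(\alpha)$ embeds into $\End_{\mathbb{K}}(FX_0)$, so elementary representation theory exhibits $M$ as a subquotient of $(FX_0)^{\oplus n} = F(X_0^{\oplus n})$ for some $n$. The subobject correspondence now yields subobjects $W_1 \subseteq W_2 \subseteq X_0^{\oplus n}$ with $M \cong F(W_2)/F(W_1) \cong F(W_2/W_1)$ by exactness of $F$, completing the proof. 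The main obstacle I expect is the surjectivity clause in the subobject correspondence; every other step essentially reduces to it.
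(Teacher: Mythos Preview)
Your route differs genuinely from the paper's. The paper argues top-down via Gabriel's theory of locally noetherian Grothendieck categories: it embeds $\Ascr^{\opp}$ into $\Ind(\Ascr^{\opp})$, shows that $F$ is an injective cogenerator there, and then invokes Gabriel's result that $\Hom(-,F)$ gives an anti-equivalence $\Ind(\Ascr^{\opp}) \to \PC(\End(F))$, which on finite-length objects restricts to $\eval_F$. No element-level or subobject-level reasoning enters.

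Your outline is the elementary Deligne--Milne style, and the architecture is sound: the graph trick for fullness and the subquotient argument for essential surjectivity are correct \emph{once} the subobject correspondence is available. But you have not proved that correspondence, and as you yourself flag, everything hinges on it. Your proposed ``reduction'' of its surjectivity is only a restatement: since $F$ is exact, $\im F(g) = F(\im g)$, so producing $g$ with $v \in \im F(g) \subseteq V$ is exactly producing a subobject $W = \im g \subseteq Z$ with $v \in FW \subseteq V$, which is what was to be shown. (A minor point: the sup defining $Z'$ exists because $Z$ has finite length --- a consequence of $F$ being faithful exact into $\Vect_{\mathbb{K}}$ --- not merely because $\Ascr$ is essentially small.)

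The standard way to fill the gap in this style is to localize: for each $X$, the full subcategory $\langle X \rangle$ of subquotients of powers of $X$ is equivalent to $\mod(B)$ for a finite-dimensional algebra $B$, and a faithful exact $F|_{\langle X \rangle}$ is Morita-equivalent to the forgetful functor, for which the subobject correspondence is tautological; one then checks that $\End(F)$ and $\End(F|_{\langle X \rangle})$ have the same image in $\End_{\mathbb{K}}(FX)$. That last step still carries real content and is where the substance of the theorem lives in the elementary approach. As written, your proposal is a correct outline with its load-bearing lemma left unproved.
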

\begin{proof}
If $\Bscr$ is an essentially small abelian category, then the category $\Ind(\Bscr)$ of \textit{ind-objects}\index{ind-objects} of $\Bscr$ is the category of left exact contravariant functors $\Bscr\r \Ab$.
By \cite[Ch. II]{MR0232821} $\Ind(\Bscr)$ is a Grothendieck category, the Yoneda embedding 
\[
\Bscr\r\Ind(\Bscr):B\mapsto \Bscr(-,B)
\]
is fully faithful and its essential image yields a family of finitely presented generators for $\Ind(\Bscr)$. Moreover,
if every object in $\Bscr$ is noetherian, then the essential image of $\Bscr$ coincides with the category of noetherian objects in $\Ind(\Bscr)$.

We will apply this with $\Bscr=\Ascr^{\opp}$.
Since $F$ is exact and faithful, it follows that $\Ascr$ has finite-dimensional Hom-spaces and every object is of finite length. Therefore $\Ascr^{\opp}$ enjoys the same properties.
Hence, the noetherian objects in the Grothendieck category $\Ind(\Ascr^{\opp})$ are given by $\Ascr^{\opp}$.
Since finite length objects are noetherian, this is then also true for the category of finite length objects.

We claim that $F$ is in fact an injective cogenerator for $\Ind(\Ascr^{\opp})$. We first note that $\Hom(-,F)$
coincides with $F$ when restricted to $\Ascr^{\opp}$. Indeed, the composition is given by
\begin{equation}
\label{eq:restriction}
A\mapsto \Ascr(A,-)\mapsto \Ind(\Ascr^{{\opp}})(\Ascr(A,-),F)=F(A).
\end{equation}
We see in particular that $\Hom(-,F)$ is exact when restricted to $\Ascr^{\opp}$, and therefore $F$ is at least  fp-injective, i.e. $\Ext^1(X,F)=0$ for every finitely presented functor $X$. Since $\Ind(\Ascr^{\opp})$ is locally noetherian, this implies that $F$ is injective (by \cite[A.4, Proposition A.11]{krause2001spectrum}). 
Faithfulness then ensures that $F$ is a
cogenerator. 

By \cite[Ch IV, Prop. 13]{MR0232821} $\End(F)$ is pseudo-compact. Note that the definition in \cite[Ch IV, \S 3]{MR0232821} of pseudo-compactness is more general than ours, but using the fact that $F$ takes
values in finite-dimensional vector spaces, one checks that $\End(F)$ is indeed pseudo-compact in our sense. A similar statement holds for pseudo-compact $\End(F)$-modules.

By \cite[Ch. IV, Th. 4]{MR0232821}, 
 there is a commuting diagram 
\begin{equation}
\label{diag:equivalence}
\begin{tikzcd}
\Ind(\Ascr^{\opp}) \arrow{r}{\Hom(-,F)} &[5em] \PC(\End(F)) \\
\Ascr^{\opp} \arrow{r}{\Hom(-,F)} \arrow[hook]{u}{\text{Yoneda}} & \pc(\End(F)) \arrow[hook]{u}
\end{tikzcd}
\end{equation}
where the horizontal arrows are anti-equivalences of categories, and $\pc(\End(F))$\index{$\pc(\End(F))$} denotes the category of pseudocompact $\End(F)$-modules of finite length. In particular, the lower
row gives an equivalence
\begin{equation}
\label{eq:equivalence}
\Ascr\xrightarrow{\Hom(-,F)} \pc(\End(F))
\end{equation}

Now we observe that we have, in fact, $\pc(\End(F))=\dis(\End(F)$ since the topology on any pseudo-compact $\End(F)$-module is generated by submodules of finite codimension.
Combining this observation with \eqref{eq:restriction} (which shows that $\Hom(-,F)$ restricted to $\Ascr$ is $\operatorname{ev}_F$) we see that \eqref{eq:equivalence} ultimately translates into an equivalence
\[
\operatorname{ev}_F:\Ascr \r \dis(\End(F)).\qedhere
\]
\end{proof}

We now introduce the coalgebra which will play the role of $\Oscr(G)$ for $F:\Ascr \to \Vect_{\mathbb{K}}$. Using the mutually inverse dualities of categories 
\begin{equation}
\begin{tikzcd}
\text{Coalg}_{\mathbb{K}} \arrow[bend left]{r}{(-)^*} & \text{PC}_{\mathbb{K}} \arrow[bend left]{l}{(-)^{\circ}}
\end{tikzcd}
\end{equation}
where $\text{Coalg}_{\mathbb{K}}$ is the category of $\mathbb{K}$-coalgebras, we define 
\begin{equation}
\coend(F):=\End(F)^{\circ}. 
\end{equation}
This gives rise to the equivalence of categories
\begin{equation}
\comod(\coend(F)) \cong \dis(\End(F)),
\end{equation}
where $\comod$ denotes the category of finite-dimensional comodules. 

A more concrete description of $\coend(F)$ can be given as follows: 
\begin{equation}
\label{eq:coend}
\coend(F)=\frac{\bigoplus_{X \in \Ascr} FX^* \otimes FX}{E},
\end{equation}
where $E$ is the following subspace:
\begin{equation}
E=\langle y_* \otimes (Ff)(x) - (Ff)^*(y_*) \otimes x \mid f \in \Hom(X,Y), x \in FX, y_* \in FY^* \rangle_{\mathbb{K}},
\end{equation}
with comultiplication and counit
\begin{align}
\Delta([\xi \otimes x]) &= \sum_i [\xi \otimes x_i] \otimes [\xi_i \otimes x], \\
\epsilon([\xi \otimes x]) &= \sum_i \xi(x_i)\xi_i(x), \end{align}
for $\xi \otimes x \in FX^* \otimes FX$ and $\sum_i \xi_i \otimes x_i \in FX^* \otimes FX$ a dual basis. One checks that $\Delta$ and $\epsilon$ are well-defined and satisfy the coassociativity and counitality conditions. 

\begin{remark}
\label{rem:abstract-coend}
More abstractly, for any functor $G:\Cscr^{\opp} \times \Cscr \to \Dscr$, where $\Cscr$ is small and $\Dscr$ is cocomplete (i.e. it has all small colimits), the $\coend(G)$ can be defined as the colimit
\begin{equation}
\label{eq:colimit}
\begin{tikzcd}
\bigsqcup_{c \to c'} G(c',c) \arrow[shift left=1.2]{r} \arrow[shift right=1.2]{r} & \bigsqcup_{c} G(c,c) \arrow{r} & \coend(G).
\end{tikzcd}
\end{equation}   
In our case, one takes $G:\Ascr^{\opp} \times \Ascr \to \VVect_{\mathbb{K}}:X \mapsto FX^* \otimes FX$, where $\VVect_{\mathbb{K}}$ denotes the category of all $\mathbb{K}$-vector spaces\footnote{The appearance of $\VVect_{\mathbb{K}}$ instead of $\Vect_{\mathbb{K}}$ here is to make sure the colimit \eqref{eq:colimit} makes sense.}.
\end{remark}

From now on, we will only work with $\coend(F)$ and $\comod(\coend(F))$. Theorem \ref{th:tkreconstruction} affords a dictionary between categorical structures on the pair $(\Ascr,F)$ and algebraic structures on the coalgebra $\coend(F)$. An example of how this dictionary works is provided by the following proposition.

\begin{proposition}
\label{prop:extra}
\begin{enumerate}
\item If $\Ascr$ is monoidal, and $F$ is a monoidal functor, then $\coend(F)$ can be made into a bialgebra.
\item If $\Ascr$ moreover has left duals, then $\coend(F)$ can be made into a Hopf algebra.
\item If $\Ascr$ moreover has right duals (i.e. $\Ascr$ is rigid monoidal), then $\coend(F)$ can be made into a Hopf algebra with invertible antipode.
\end{enumerate}
\end{proposition}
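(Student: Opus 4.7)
My plan is to build the claimed algebraic structure directly on the explicit presentation \eqref{eq:coend} of $\coend(F)$. Equivalently, by Theorem \ref{th:tkreconstruction} one can transport structure along the equivalence $\comod(\coend(F))\simeq \Ascr$: a $\mathbb{K}$-linear monoidal structure on $\comod(C)$ for which the forgetful functor to $\Vect_{\mathbb{K}}$ is strong monoidal is well known to be the same datum as a bialgebra structure on $C$, a left-rigid lift corresponds to an antipode, and a rigid lift to invertibility of the antipode. I would present the direct construction for usability and merely remark the abstract reformulation.

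For (1), I would define multiplication on classes in \eqref{eq:coend} by
\[
[\xi\otimes x]\cdot [\eta\otimes y] := \bigl[(\xi\otimes\eta)\circ J_{X,Y}^{-1}\;\otimes\;J_{X,Y}(x\otimes y)\bigr]\in F(X\otimes Y)^*\otimes F(X\otimes Y),
\]
where $J_{X,Y}\colon FX\otimes FY\xrightarrow{\sim} F(X\otimes Y)$ is the monoidal constraint, and take as unit the image of $1\in \mathbb{K}\cong F\mathbf{1}^*\otimes F\mathbf{1}$. Well-definedness modulo the subspace $E$ reduces to naturality of $J$, while associativity, unitality and compatibility with $\Delta$ and $\epsilon$ (the bialgebra axioms) follow from the pentagon, triangle and naturality axioms combined with the dual basis identities in \eqref{eq:coend}.

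For (2), I would use that a left dual $(X^\vee,\operatorname{ev}_X,\operatorname{coev}_X)$ of $X$ in $\Ascr$ yields a left dual $(FX^\vee,F\operatorname{ev}_X,F\operatorname{coev}_X)$ of $FX$ in $\Vect_{\mathbb{K}}$, which canonically identifies $FX^\vee \cong FX^*$ and $FX \cong (FX^\vee)^*$. I would then define the antipode by
\[
S\colon \coend(F)\to \coend(F)\,:\,[\xi\otimes x]\mapsto [\widetilde{x}\otimes \widetilde{\xi}]\in F(X^\vee)^*\otimes FX^\vee,
\]
where $\widetilde{x}\in (FX^\vee)^*$ and $\widetilde{\xi}\in FX^\vee$ are the images of $x$ and $\xi$ under these identifications. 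The convolution identities $m\circ(S\otimes\id)\circ\Delta=\eta\circ\epsilon=m\circ(\id\otimes S)\circ\Delta$ then reduce to the zig-zag equations for the duality, applied to a dual basis for $FX$. For (3), right duals ${}^\vee X$ furnish, by the symmetric construction, a map $S'\colon \coend(F)\to \coend(F)$, and one checks that $S'$ is a two-sided inverse of $S$ using the canonical isomorphisms $({}^\vee X)^\vee\cong X \cong {}^\vee(X^\vee)$.

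The main obstacle is of a bookkeeping nature: verifying that these formulas descend to the quotient by $E$ and satisfy the bialgebra/Hopf axioms on the nose, not merely up to coherence. The abstract route through $\comod(\coend(F))$ avoids most of this by reducing the statements to well-known equivalences between extra structures on a coalgebra and extra structures on its comodule category, and I would invoke it to shortcut coherence verifications wherever they become onerous.
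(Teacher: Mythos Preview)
Your proposal is correct, but it takes a somewhat different route from the paper. You work on the explicit coend presentation \eqref{eq:coend}, defining multiplication and antipode on classes $[\xi\otimes x]$ and then checking descent through the quotient by $E$, while offering the abstract route via $\comod(\coend(F))$ as a fallback for coherence issues. The paper instead works on the dual object $\End(F)$ directly as the algebra of natural transformations $F\Rightarrow F$: for (2) it defines the antipode in one line by $S(\phi)_X=(\phi_{X^*})^*\colon FX\to FX$, checks continuity, and observes that the zig-zag identities for left duals translate immediately into the antipode axiom. The paper's approach buys brevity and avoids all quotient bookkeeping, since there are no equivalence classes to track on the $\End(F)$ side; your approach has the advantage of being phrased directly on $\coend(F)$, which is the object one ultimately cares about and which is used explicitly later in the paper (e.g.\ in \S\ref{sec:alg-pres}). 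One small caveat: your invocation of Theorem~\ref{th:tkreconstruction} to transport monoidal structure along $\comod(\coend(F))\simeq\Ascr$ presupposes that $F$ is faithful and exact, which is not part of the hypotheses of the proposition; since you use this only as an auxiliary shortcut and not as the main argument, this is harmless, but it is worth noting.
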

\begin{proof}
Consider $(2)$ for example. The antipode is defined as 
\begin{equation}
S:\End(F) \to \End(F):\phi \mapsto S(\phi),
\end{equation}
where $S(\phi)_X=\phi_{X^*}^*:FX \to FX$ for any $X \in \Ascr$. One then checks that $S$ is continuous and the axioms for left duals in $\Ascr$ correspond exactly to the antipode axiom.
\end{proof}

\begin{example}
\label{ex:coordinatering}
If $G$ is an affine algebraic group scheme over $\mathbb{K}$, denote by $\Rep_{\mathbb{K}}(G)$ the rigid monoidal category of finite-dimensional rational $G$-representations and $F:\Rep_{\mathbb{K}}(G) \to \Vect_{\mathbb{K}}$ the forgetful (monoidal) functor. Then one computes that 
\begin{equation}
\coend(F) \cong \Oscr(G)
\end{equation}
as Hopf algebras, and $\eval_F$ is the familiar equivalence between $\Rep_{\mathbb{K}}(G)$ and the category of finite-dimensional comodules over the coordinate ring of $G$.
\end{example}

\section{Presentations of $\uaut(A)$}
\label{sec:pres}
The starting point of our work in \cite{MR3570144,MR3713007} is the following simple observation: in order to construct the bialgebra (resp. Hopf algebra) $\coend(F)$ defined in Section \ref{ss:TK}, one does not need to start from an \textit{abelian} category, not even from a \textit{linear} one. This is only necessary to ensure that the corresponding evaluation functor \eqref{eq:eval} defines an equivalence, as in Theorem \ref{th:tkreconstruction}. For the actual construction of the bialgebra (resp. Hopf algebra) $\coend(F)$, it suffices to start with a monoidal (resp. rigid monoidal category), and these can be defined via generators and relations, much in the same way as monoids and groups. 

In many situations, Hopf algebras appear which are defined abstractly via a universal property (for some examples, see Section \ref{sec:examples}), and we hope to convince the reader that it is often a good idea to try and express them in the form $\coend_{\Cscr}(F)$, for a rigid monoidal category $\Cscr$ defined via generators and relations. This allows for much greater flexibility since there are many ways of changing the pair $(\Cscr,F)$ that do not influence the resulting Hopf algebra. 

\subsection{Algebra presentations for $\coend(F)$}
\label{sec:alg-pres}
We now consider presentations for a (strict) monoidal category $\Cscr$, and use the following notation: 
\begin{equation}
\Cscr=\langle (X_k)_k \hspace{.1cm} | \hspace{.1cm} (\phi_l)_l \hspace{.1cm} | \hspace{.1cm} (\chi_m)_m \rangle_{\otimes}.
\end{equation}
Here, $(X_k)_k$ denote the $\otimes$-generating objects, $(\phi_l)_l$ the $\otimes$-generating morphisms, and $(\chi_m)_m$ the relations among the morphisms. Also, let $F:\Cscr \to \Vect_{\mathbb{K}}$ denote a monoidal functor. Then $\coend(F)$ is a bialgebra by Proposition \ref{prop:extra}, and an algebra presentation can be obtained as follows: 
\begin{enumerate}
\item If $\Ob(\Cscr)$ is not a free monoid on the generating objects $(X_k)_k$, then change the presentation of $\Cscr$ by adding isomorphisms (both arrows and relations) to reduce to this case.
\item Choose bases $(e_{ki})_i$ for each $F(X_k)$.
\item The corresponding ``matrix coefficients" $(z_{kij})_{kij} \in \coend(F)$ are defined via the coaction
\begin{equation}
\label{eq:coaction}
\delta(e_{ki})=\sum_j z_{kij} \otimes e_{kj}.
\end{equation}
They generate $\coend(F)$ as an algebra.
\item Writing out the compatibility of \eqref{eq:coaction} with the generating morphisms $(\phi_l)_l$ produces the relations amongst the generators.
\item The comultiplication and counit are defined via
\begin{align}
\Delta(z_{kij})&=\sum_p z_{kip} \otimes z_{kpj}\\
\epsilon(z_{kij})&=\delta_{ij}
\end{align}
\end{enumerate}
Note that the relations $(\chi_l)_l$ are not used. In practice this process can often be shortened by clever combinatorics. Finally, the procedure outlined above is a more explicit version of formula \eqref{eq:coend} for monoidal categories. In particular, if there are only finitely many $X_k$ and $\phi_l$, then one obtains a finite presentation for $\coend(F)$. In the following sections this will be applied to $\uend(A)$ and $\uaut(A)$ defined in \cite[Ch. 4,7]{MR1016381}.

\subsection{Tannakian reconstruction of $\uend(A)$}
\label{section:tk-end}
Assume $A=TV/(R)$, for $R \subset V \otimes V$ and $\dim_{\mathbb{K}}(V)<\infty$, i.e. $A$ is a quadratic algebra. In \cite[\S 5.3]{MR1016381}, it was shown that the universal bialgebra $\uend(A)$ coacting on $A$ has the following presentation (where $\bullet$ denotes the black product, introduced in \cite[Ch. 3]{MR1016381}):
\begin{equation}
\label{eq:uendblack}
\uend(A)=A^!\bullet A=\frac{T(V^* \otimes V)}{(\sigma_{(23)}(R^{\perp} \otimes R))},
\end{equation}
where 
\begin{equation}
R^{\perp}=\{\phi \in (V \otimes V)^* \mid \phi(R)=0\},
\end{equation} 
and 
\begin{equation}
\sigma_{(23)}:V^* \otimes V^* \otimes V \otimes V \to V^* \otimes V \otimes V^* \otimes V
\end{equation} 
transposes the second and third factors. The structure maps $\Delta$ and $\epsilon$ are given by the usual matrix comultiplication and counit.

Consider the monoidal category
\begin{equation}
\Cscr=\langle r_1,r_2 \mid r_2 \to r_1 \otimes r_1 \rangle_{\otimes}
\end{equation}
and the monoidal functor $F:\Cscr \to \Vect_{\mathbb{K}}$, which is uniquely determined by 
\begin{align}
r_1 &\mapsto V, \\
r_2 &\mapsto R, \\
(r_2 \to r_1 \otimes r_1) &\mapsto (R \hookrightarrow V \otimes V).
\end{align}

\begin{proposition}
There is an isomorphism of bialgebras
\begin{equation}
\coend(F) \cong \uend(A).
\end{equation}
\end{proposition}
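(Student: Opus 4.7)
My plan is to apply the algebra-presentation recipe of \S\ref{sec:alg-pres} to the pair $(\Cscr,F)$ and verify that the resulting bialgebra matches the presentation in \eqref{eq:uendblack}. A more conceptual alternative is to argue by universal property: both $\coend(F)$ and $\uend(A)$ are the universal bialgebra $B$ equipped with a $B$-coaction on $V$ whose diagonal extension to $V\otimes V$ restricts to the subspace $R$.

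First I would fix bases $(e_i)$ of $V=F(r_1)$ and $(f_\alpha)$ of $R=F(r_2)$, and write the inclusion $\iota:R\hookrightarrow V\otimes V$ (the image under $F$ of the generating morphism $r_2\to r_1\otimes r_1$) as $\iota(f_\alpha)=\sum_{i,j} c^{\alpha}_{ij}\, e_i\otimes e_j$. The recipe then produces two families of algebra generators for $\coend(F)$: elements $z_{ij}$ attached to $r_1$ via $\delta(e_i)=\sum_j z_{ij}\otimes e_j$, and elements $w_{\alpha\beta}$ attached to $r_2$ via $\delta(f_\alpha)=\sum_\beta w_{\alpha\beta}\otimes f_\beta$, equipped in each case with the matrix comultiplication and counit. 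Since $\Ob(\Cscr)$ is already the free monoid on $\{r_1,r_2\}$, step (1) of the recipe is vacuous.

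Next I would unpack the relations arising from step (4) applied to the sole generating morphism $\iota$. Naturality of $\delta$ along $\iota$ gives
\[
\sum_{\beta,k,l} w_{\alpha\beta}\, c^{\beta}_{kl}\otimes e_k\otimes e_l \;=\; \sum_{i,j,k,l} c^{\alpha}_{ij}\, z_{ik} z_{jl}\otimes e_k\otimes e_l
\]
inside $\coend(F)\otimes V\otimes V$. I read off two consequences. First, every $w_{\alpha\beta}$ is thereby expressed as a quadratic polynomial in the $z$'s, so the $w$'s may be eliminated as independent generators. Second, the right-hand side must actually lie in $\coend(F)\otimes R$, and pairing with the annihilator detects this: for each $r=\sum_{i,j} r_{ij}\, e_i\otimes e_j\in R$ and each $\psi=\sum_{k,l}\psi_{kl}\, e^{*}_k\otimes e^{*}_l\in R^{\perp}$ one obtains
\[
\sum_{i,j,k,l} r_{ij}\,\psi_{kl}\, z_{ik} z_{jl}=0.
\]
Identifying $z_{ij}$ with $e^{*}_i\otimes e_j\in V^{*}\otimes V$, the subspace of $(V^{*}\otimes V)^{\otimes 2}$ spanned by these quadratic expressions is exactly $\sigma_{(23)}(R^{\perp}\otimes R)$; together with the tautological match of comultiplications and counits, this yields the desired bialgebra isomorphism with \eqref{eq:uendblack}.

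The hard part will be the indexing bookkeeping needed to confirm that the relation subspace matches $\sigma_{(23)}(R^{\perp}\otimes R)$ on the nose and not the image under some permutation of the four tensor factors. To sidestep this, I would keep in reserve the universal-property argument: bialgebra maps $\coend(F)\to B$ biject with $\mathbb{K}$-linear monoidal functors $\Cscr\to\comod(B)$ whose underlying functor to $\Vect_{\mathbb{K}}$ is $F$, and such a functor is nothing but a $B$-comodule structure on $V$ whose diagonal extension to $V\otimes V$ restricts to $R$\,---\,precisely the universal property defining $\uend(A)$.
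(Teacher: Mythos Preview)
Your proof is correct. The paper actually mentions both routes you consider---the recipe of \S\ref{sec:alg-pres} and the direct use of formula \eqref{eq:coend}---and then carries out the latter rather than the former. In the paper's version one observes that, since $F(r_2)=R\subset V\otimes V=F(r_1)^{\otimes 2}$, the numerator in \eqref{eq:coend} collapses to $T(V^*\otimes V)$, and the relation subspace $E$ is generated by $y_*\otimes x - (y_*|_R)\otimes x$ for $y_*\in (V\otimes V)^*$ and $x\in R$, which is visibly $\sigma_{(23)}(R^\perp\otimes R)$ without any choice of basis. Your coordinate computation reaches the same presentation but, as you yourself flag, demands care with the index conventions; the paper's coordinate-free argument sidesteps this entirely. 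The universal-property argument you keep in reserve is a clean independent check and does not appear in the paper's proof.
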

\begin{proof}
This follows from implementing the procedure described in Section \ref{sec:alg-pres} or by using formula \eqref{eq:coend}. Let us try the second approach:
\begin{equation}
\label{eq:coend-explicit}
\coend(F)=\frac{\bigoplus_{X \in \Cscr} FX^* \otimes FX}{E}=\frac{T(F(v)^* \otimes F(v))}{E},
\end{equation}
where in the second equality we used that $F(r) \subset F(v) \otimes F(v)$, so we only need to consider sums of powers of $F(v)^* \otimes F(v)$. Now because $F$ is monoidal, the numerator of \eqref{eq:coend-explicit} reduces to $T(V^* \otimes V)$. To compute $E$, it again suffices to consider the generating morphism $r_2 \to r_1 \otimes r_1$, and we see that $E$ is generated by elements
\begin{equation}
\label{eq:coend-end}
y_* \otimes x - y_*|_R \otimes x, \text{ where } y_* \in (V \otimes V)^* \text{ and } x \in R,
\end{equation}
where we identified $x\in R$ with its image under the inclusion $R \hookrightarrow V \otimes V$. This description clearly shows that $E=(\sigma_{(23)}(R^{\perp} \otimes R))$. Finally, one easily checks that the bialgebra structures for $\coend(F)$ and $\uend(A)$ coincide.
\end{proof}

\subsection{Tannakian reconstruction of $\uaut(A)$}
\label{ss:tanrecaut}
At this point we have only provided an alternative construction of $\uend(A)$, but have not gained anything. This situation changes if one considers $\uaut(A)$, as defined in \cite[\S 7.5]{MR1016381}. It was shown there that $\uaut(A)$ can be constructed from $\uend(A)$ by formally adding an infinite number of generators and relations. It is however not clear that this gives rise to a finitely generated Hopf algebra, or even that $\uaut(A)$ does not collapse. 

In order to ensure that $\uaut(A)$ has good properties, we restrict our class of quadratic algebras to Koszul Frobenius algebras, which were considered in \cite[Ch. 8]{MR1016381}. In fact, we will consider their Koszul duals which are more natural from our viewpoint. 

\begin{definition} 
\label{def:AS-regular}
A connected graded algebra $A$ is \textit{Artin-Schelter (AS) regular}\index{Artin-Schelter regular algebra} of dimension $d$ if it has finite global dimension $d$, and
\begin{equation}
\Ext^i_{A}(\mathbb{K},A) = \left \{ 
\begin{array}{ll}
0 & \textrm{if $i \neq d$} \\
\mathbb{K}(l) & \textrm{if $i=d$},
\end{array} \right.
\end{equation}
for some $l \in \mathbb{Z}$ called the \textit{AS-index}\index{AS-index}.
\end{definition}
	
\begin{remark}
This definition requires a few comments:
\begin{enumerate}
\item Many authors also ask for finite GK-dimension (i.e. $A$ is of polynomial growth), but we will not need it in what follows.
\item For AS-regular algebras the AS-index $l>0$, see~\cite[Proposition 3.1]{MR1371143}.
\item One can define both left and right AS-regular algebras, but it turns out that the definition is left-right symmetric, see \cite[Proposition 2.6]{MR3169570}. In particular, Definition \ref{def:AS-regular} is unambiguous.
\end{enumerate}
\end{remark}

\begin{lemma}\cite[Proposition 5.10]{MR1388568} 
\label{lem:koszulfrobas}
Suppose $A$ is Koszul and of finite global dimension, then $A$ is AS-regular if and only if its Koszul dual $A^!$ is Frobenius.
\end{lemma}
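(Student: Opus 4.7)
My plan is to compute $\Ext^\bullet_A(\mathbb{K}, A)$ explicitly by applying $\Hom_A(-,A)$ to the Koszul resolution, and to translate the two sides of the equivalence into a comparison between the resulting complex and its dual.

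Since $A$ is Koszul with $\gldim A = d$, the minimal graded free resolution of $\mathbb{K}$ as a left $A$-module is the Koszul resolution
\[
K_\bullet: \quad 0 \to A \otimes (A^!_d)^* \to \cdots \to A \otimes (A^!_1)^* \to A \to \mathbb{K} \to 0,
\]
and in particular $A^!$ is finite-dimensional, concentrated in degrees $0, 1, \ldots, d$. Applying $\Hom_A(-, A)$ and using $\Hom_A(A \otimes W, A) \cong A \otimes W^*$ for finite-dimensional $W$, I obtain the cochain complex
\[
C^\bullet: \quad 0 \to A \to A \otimes A^!_1 \to \cdots \to A \otimes A^!_d \to 0,
\]
whose cohomology is $\Ext^\bullet_A(\mathbb{K}, A)$. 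Crucially, $C^\bullet$ carries a natural right $A^!$-action extending the regular action on the second tensor factor, and its differential is dual to the Koszul differential.

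For the direction ($\Leftarrow$), assume $A^!$ is Frobenius with Nakayama form $\omega: A^! \to \mathbb{K}$ concentrated in degree $-l$. Then $a \mapsto \omega(a \cdot -)$ induces a graded bimodule isomorphism $A^! \cong (A^!)^*(-l)$, which restricts in each degree to isomorphisms $A^!_i \cong (A^!_{d-i})^*$ up to shift. Tensoring with $A$ identifies $C^i$ with $K_{d-i}$ (up to shift), and the differentials intertwine because both are built from the same canonical element in $V \otimes V^*$. Since $K_\bullet$ is exact in positive homological degrees, $H^i(C^\bullet) = 0$ for $i < d$ and $H^d(C^\bullet) \cong \mathbb{K}(l)$, which is AS-regularity.

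For the direction ($\Rightarrow$), assume $A$ is AS-regular with AS-index $l$. Then $H^d(C^\bullet) \cong \mathbb{K}(l)$ and $H^i(C^\bullet) = 0$ for $i < d$. Reading $C^\bullet$ backwards, the augmented complex becomes a minimal graded free resolution of $\mathbb{K}(l)$ by left $A$-modules, which by uniqueness of minimal resolutions must coincide with the shifted Koszul resolution. This forces vector-space isomorphisms $A^!_{d-i} \cong (A^!_i)^*(l)$ for each $i$. The right $A^!$-action on the Koszul complex, inherited from the bar construction, is unique with respect to the augmentation, so these isomorphisms assemble into an isomorphism of graded right $A^!$-modules $A^! \cong (A^!)^*(l)$, that is, a non-degenerate associative bilinear form on $A^!$, which is precisely a Frobenius structure.

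The main obstacle lies in the converse direction: promoting the vector-space isomorphism furnished by uniqueness of minimal resolutions into an isomorphism of right $A^!$-modules, so that the resulting pairing is associative rather than merely non-degenerate. This rests on the canonicity and uniqueness of the right $A^!$-action on the Koszul complex regarded as a projective resolution of $\mathbb{K}$, which forces any chain-map identification to intertwine the two actions.
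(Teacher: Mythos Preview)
Your approach is essentially the same as the paper's: both compute $\Ext^\bullet_A(\mathbb{K},A)$ by applying $\Hom_A(-,A)$ to the Koszul resolution, observe that AS-regularity amounts to the resulting complex being isomorphic (as a complex of $A$-modules) to the original Koszul resolution shifted, and then translate this into an isomorphism $A^!\cong (A^!)^*$ of one-sided $A^!$-modules. The paper compresses the last step into the single phrase ``using the explicit description of the Koszul differentials,'' whereas you spell out the two directions separately and flag the genuine subtlety---that a chain isomorphism of $A$-module complexes must be shown to intertwine the $A^!$-action. Your justification for this is on the right track: since the Koszul differential is built from multiplication by degree-one elements of $A^!$ (dually, the canonical element of $V\otimes V^*$), compatibility with the differential forces compatibility with the action of $A^!_1$, and hence with all of $A^!$ since it is generated in degree one. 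That is precisely the content hidden in the paper's phrase, so your proof and the paper's coincide.
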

\begin{proof}
The Koszul resolution of $A$ looks like 
\begin{equation}
\label{eq:koszul-complex}
K_{\bullet}(A):0 \to A \otimes_{\mathbb{K}} (A^!_d)^* \to \cdots \to A \otimes_{\mathbb{K}} (A^!_1)^* \to A \to \mathbb{K} \to 0,
\end{equation}
which is finite since $A$ has finite global dimension, and in particular $A^! \cong \Ext^{\bullet}_A(\mathbb{K},\mathbb{K})$ is finite-dimensional, with top non-zero degree equal to $d$. 

From Definition \ref{def:AS-regular}, $A$ is AS-regular if and only if the Koszul complex \eqref{eq:koszul-complex} is isomorphic to the complex
\begin{equation}
0 \to A \to A_1^! \otimes A \to \cdots \to A_d^! \otimes A \to 0
\end{equation}
as complexes of right $A$-modules. Using the explicit description of the Koszul differentials, this condition is equivalent to to the existence of an isomorphism of left $A$-modules $A^! \to (A^!)^*[-d]$. The existence of such an isomorphism is in turn equivalent to $A^!$ being Frobenius and hence we are done.
\end{proof}

In \cite[\S 7.5]{MR1016381}, $\uaut(A)$ was introduced as the Hopf envelope of $\uend(A)$, which was denoted $H(\uend(A))$. We will make use of a categorical analogue of this notion.

\begin{proposition}~\cite[Lemma 4.2]{MR1289425}
Let $\Cscr$ be a monoidal category. Then there exists a unique monoidal category $\Cscr^*$ admitting right duals and a monoidal functor $*:\Cscr \to \Cscr^*$ such that for any small monoidal category $\Dscr$ admitting right duals and monoidal functor $F$, there exists a unique monoidal functor $F^*$ making the diagram
\begin{equation}
\begin{tikzcd}
\Cscr \rar{*} \dar[swap]{F} & \Cscr^* \dlar[dashed]{F^*} \\
\mathcal{D}
\end{tikzcd}
\end{equation}
commute. Moreover, this construction is compatible with  Hopf envelopes: for any monoidal  category $\Cscr$ and monoidal functor $F:\Cscr \to \Vect_{\mathbb{K}}$, there are isomorphisms of Hopf algebras
\begin{equation}
H(\coend_{\Cscr}(F)) \cong \coend_{\Cscr^*}(F^*).
\end{equation}
\end{proposition}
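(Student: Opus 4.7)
The plan is in two parts: construct $\Cscr^*$ explicitly by freely adjoining right duals, then deduce the Hopf-envelope comparison by matching universal properties. For the construction, take $\Cscr^*$ to be the strict monoidal category whose objects are finite words in formal symbols $X^+$ and $X^-$ with $X \in \Ob(\Cscr)$, and concatenation as tensor product. Morphisms are generated under composition and tensor product by (a) the morphisms of $\Cscr$, regarded as acting between $+$-words, and (b) formal evaluations $\eval_X:X^- \otimes X^+ \to \mathbf{1}$ and coevaluations $\coeval_X:\mathbf{1} \to X^+ \otimes X^-$ for each $X \in \Cscr$. These are subject to (i) the relations already holding in $\Cscr$, (ii) naturality of $\eval$ and $\coeval$ with respect to morphisms of $\Cscr$, and (iii) the snake identities witnessing $X^-$ as a right dual of $X^+$. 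The canonical monoidal functor $*:\Cscr \to \Cscr^*$ sends $X$ to $X^+$ and acts identically on morphisms. Given any $\Dscr$ admitting right duals and any monoidal $F:\Cscr\to\Dscr$, setting $F^*(X^-) := (FX)^\vee$ and using the right-dual (co)evaluations chosen in $\Dscr$ extends $F$ uniquely to a monoidal $F^*:\Cscr^*\to\Dscr$; the uniqueness of $\Cscr^*$ is then formal from its universal property.

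For the Hopf envelope identity, I would argue by Yoneda: for every Hopf algebra $K$, chain together the natural bijections
\begin{align}
\Hom_{\text{Hopf}}(H(\coend_{\Cscr}(F)),K) &\cong \Hom_{\text{bialg}}(\coend_{\Cscr}(F),K) \\
&\cong \{\text{monoidal lifts of } F \text{ through } \comod(K)\to\Vect_{\mathbb{K}}\} \\
&\cong \{\text{monoidal lifts of } F^* \text{ through } \comod(K)\to\Vect_{\mathbb{K}}\} \\
&\cong \Hom_{\text{Hopf}}(\coend_{\Cscr^*}(F^*),K).
\end{align}
The first is the defining universal property of the Hopf envelope; the second is the universal property of $\coend$ (the analogue of \eqref{eq:coend} for monoidal functors, which underlies Proposition \ref{prop:extra}); the third uses that $\comod(K)$ has right duals via the antipode of $K$, together with the universal property of $\Cscr^*$ just established; the fourth combines the universal property of $\coend_{\Cscr^*}(F^*)$ with the fact that any bialgebra morphism between Hopf algebras is automatically a Hopf algebra morphism. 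Yoneda then supplies the desired isomorphism.

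The delicate step is the universal property of $\Cscr^*$: one must verify that the relations (i)--(iii) are strong enough to force coherence for the right-dual structure, yet weak enough that no non-trivial morphism of $\Cscr$ is accidentally collapsed (so that the embedding $\Cscr \hookrightarrow \Cscr^*$ remains faithful). This is a diagrammatic argument best carried out in the language of string diagrams \`a la Joyal--Street, which provide a normal form for morphisms of $\Cscr^*$ and make the extension $F \mapsto F^*$ unambiguous. Once this coherence is in place, the Hopf-envelope identity is essentially formal.
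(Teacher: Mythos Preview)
The paper does not supply its own proof of this proposition; it is stated with a bare citation to \cite[Lemma 4.2]{MR1289425} and used as a black box. So there is no ``paper's proof'' to compare against, and your outline is essentially the intended argument from the cited source. That said, there is one genuine gap and one unnecessary worry in what you wrote.

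The gap is in your construction of $\Cscr^*$. You take as objects the finite words in symbols $X^+$ and $X^-$, with $X^-$ a right dual of $X^+$. But $\Cscr^*$ is required to admit right duals for \emph{all} of its objects, in particular for $X^-$ itself. Nothing in your category plays the role of a right dual of $X^-$: the pairing $\eval_X:X^-\otimes X^+\to\mathbf{1}$ exhibits $X^+$ as a \emph{left} dual of $X^-$, not a right one. The standard fix is to iterate: introduce symbols $X^{(n)}$ for all $n\ge 0$ with $X^{(0)}=X$, together with (co)evaluations making $X^{(n+1)}$ a right dual of $X^{(n)}$, and take words in these. With this correction the universal property goes through exactly as you describe.

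The unnecessary worry is faithfulness of $*:\Cscr\to\Cscr^*$. The universal property as stated does not assert that $*$ is faithful, and your Yoneda chain for the Hopf-envelope comparison nowhere uses it. (Indeed, faithfulness can fail in general and is a separate, harder question.) Once $\Cscr^*$ is correctly built, the sequence of bijections you wrote down is exactly right: Hopf-envelope adjunction, the universal property of $\coend$ as a bialgebra, the universal property of $\Cscr^*$ applied to the rigid target $\comod(K)$, and the fact that bialgebra maps between Hopf algebras are automatically Hopf maps.
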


\begin{corollary}
\label{cor:aut-coend}
For $(\Cscr,F)$ as in Section \ref{section:tk-end}, there is an isomorphism 
\begin{equation}
\uaut(A) \cong \coend_{\Cscr^*}(F^*)
\end{equation}
of Hopf algebras.
\end{corollary}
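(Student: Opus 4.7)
The proof is essentially an assembly of three pieces that have already been put in place, so the main work is to verify that the chain of isomorphisms is genuinely one of Hopf algebras (not merely of bialgebras or algebras).

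The plan is to write down the chain
\[
\uaut(A) \;=\; H(\uend(A)) \;\cong\; H(\coend_{\Cscr}(F)) \;\cong\; \coend_{\Cscr^*}(F^*),
\]
and justify each step. The first equality is the definition of $\uaut(A)$ recalled just before the proposition, coming from \cite[\S 7.5]{MR1016381}. The second isomorphism is the Tannakian reconstruction of $\uend(A)$ established in Section \ref{section:tk-end}; I would invoke the fact that the Hopf envelope is a functor on bialgebras (left adjoint to the forgetful functor from Hopf algebras to bialgebras) so that it transports the bialgebra isomorphism $\uend(A) \cong \coend_{\Cscr}(F)$ to a Hopf algebra isomorphism of their envelopes. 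The third isomorphism is precisely the second statement of the preceding proposition applied to our particular pair $(\Cscr, F)$.

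The step I would pay the most attention to (though it is not really an obstacle) is checking that the bialgebra structures identified in Section \ref{section:tk-end} are the ones with respect to which the Hopf envelope is formed, so that the naturality of the universal property in the preceding proposition genuinely applies. Once this bookkeeping is in place, composing the three isomorphisms yields the desired identification, and there is nothing further to verify since the universal property of $\Cscr^*$ guarantees uniqueness of the induced monoidal functor $F^*$ and hence uniqueness of the coend.
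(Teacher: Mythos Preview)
Your proposal is correct and matches the paper's intended argument: the corollary is stated without proof precisely because it is the immediate concatenation of the definition $\uaut(A)=H(\uend(A))$ from \cite[\S 7.5]{MR1016381}, the bialgebra isomorphism $\uend(A)\cong\coend_{\Cscr}(F)$ from Section~\ref{section:tk-end}, and the compatibility $H(\coend_{\Cscr}(F))\cong\coend_{\Cscr^*}(F^*)$ asserted in the preceding proposition. Your remark about functoriality of the Hopf envelope is the only point not made explicit in the paper, and it is indeed what is needed to pass the middle isomorphism through $H(-)$.
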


Hence, to obtain a presentation for $\uaut(A)$, it suffices to construct \textit{rigidisations}\index{rigidisation} of $(\Cscr,F)$. Because one is only interested in the resulting $\coend$, there is again a lot of extra freedom. The following proposition illustrates the end result of such a construction, and provides a minimal presentation of $\uaut(A)$.

\begin{proposition}\cite[Appendix A]{MR3570144}
\label{prop:pres-aut}
For $A=TV/(R)$ a Koszul, Artin-Schelter regular algebra of global dimension $d$, consider the monoidal category 
\begin{equation}
\Dscr=\langle r_1,r_2, \ldots, r_{d-1},r_d^{\pm 1} \mid r_i \to r_1^{\otimes i}, r_ar_d^{-1}r_{d-a} \to 1 \rangle_{\otimes},
\end{equation}
where $i$ runs over $\{2,\ldots,d\}$ and $a \in \{1, \ldots, d-1\}$ is fixed (so there are $d$ generating morphisms). Define a monoidal functor $G:\Dscr \to \Vect_{\mathbb{K}}$ via
\begin{align}
G(r_1) &= V\\
G(r_i) &= \bigcap_{k+l+2=i}V^{\otimes k} \otimes R \otimes V^{\otimes l} \text{ for } i \geq 2\\
G(r_i \to r_1^{\otimes i}) &= \bigg(\bigcap_{k+l+2=i}V^{\otimes k} \otimes R \otimes V^{\otimes l} \hookrightarrow V^{\otimes i}\bigg)
\end{align}
Then $\coend_{\Dscr}(G) \cong \uaut(A)$.
\end{proposition}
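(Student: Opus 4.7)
The plan is to use Corollary \ref{cor:aut-coend}, which identifies $\uaut(A)$ with $\coend_{\Cscr^*}(F^*)$ where $(\Cscr,F)$ is the pair from Section \ref{section:tk-end}. It suffices to show that the pair $(\Dscr,G)$ computes the same coend, and I would achieve this by exhibiting $\Dscr$ itself as a rigid monoidal category together with mutually quasi-inverse monoidal functors between $\Dscr$ and $\Cscr^*$ compatible with $F^*$ and $G$.

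First, I would verify that $\Dscr$ admits right duals. The object $r_d$ is formally invertible by construction, and for each $a \in \{1,\ldots,d-1\}$ the given morphism $r_a \otimes r_d^{-1} \otimes r_{d-a} \to 1$ plays the role of an evaluation exhibiting $r_d^{-1} \otimes r_{d-a}$ as a right dual of $r_a$. The corresponding coevaluation must be assembled from the generating morphisms $r_i \to r_1^{\otimes i}$ together with the formal inverse of $r_d$, and the triangle identities then need to be checked by a direct diagram chase. For values $a' \neq a$, the duality data have to be deduced from the single relation at the chosen $a$; this is where the Frobenius structure of $A^!$ (Lemma \ref{lem:koszulfrobas}) enters, guaranteeing that a duality-type pairing at level $a$ forces matching pairings at all other levels.

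Next, the monoidal functor $\Cscr \to \Dscr$ sending the generators of $\Cscr$ to the corresponding objects and morphisms in $\Dscr$ extends uniquely, by the universal property of $\Cscr^*$ applied to the rigid $\Dscr$, to a monoidal functor $\Cscr^* \to \Dscr$. In the opposite direction, one constructs $\Dscr \to \Cscr^*$ by sending $r_i$ (for $2 \leq i \leq d$) to the intersection object $\bigcap_{k+l+2=i} r_1^{\otimes k} \otimes r_2 \otimes r_1^{\otimes l}$ built inside $\Cscr^*$, and $r_d^{-1}$ to the formal inverse of $r_d$ (whose invertibility in $\Cscr^*$ is forced by rigidity together with the fact that this object is one-dimensional in the Tannakian realisation, again by Koszul--Frobenius). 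One must then verify that these functors are compatible with $F^*$ and $G$, and that the algorithm of Section \ref{sec:alg-pres} produces matching generators, relations, and Hopf structure maps on both sides.

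I expect the main obstacle to be showing that the single relation $r_a r_d^{-1} r_{d-a} \to 1$ at one fixed level $a$ is sufficient to generate the full rigidity structure on $\Dscr$. Combinatorially, this amounts to showing that every duality pairing forced by the rigidification of $\Cscr$ can be expressed as an iterated composition involving this relation together with the morphisms $r_i \to r_1^{\otimes i}$, using the Koszul intersections and the Frobenius pairing on $A^!$ to rewrite pairings at other levels. The detailed combinatorial verification is the content of \cite[Appendix A]{MR3570144}.
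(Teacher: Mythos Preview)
Your overall strategy---use Corollary~\ref{cor:aut-coend} and relate $(\Dscr,G)$ to $(\Cscr^*,F^*)$---is the right starting point, and your identification of the Frobenius property of $A^!$ as the key input is correct. However, the execution has a genuine gap: you aim to exhibit \emph{mutually quasi-inverse} monoidal functors between $\Dscr$ and $\Cscr^*$, but these categories are not equivalent. The category $\Cscr^*$ is the \emph{free} rigidification of $\Cscr$; in particular it contains infinitely many distinct objects $r_1, r_1^*, r_1^{**},\ldots$ (iterated formal duals), whereas $\Dscr$ is generated by finitely many objects. Relatedly, your proposed functor $\Dscr\to\Cscr^*$ sends $r_i$ to an ``intersection object'' inside $\Cscr^*$, but $\Cscr^*$ is merely a monoidal category with no abelian structure, so such intersections do not exist there---they only make sense after applying $F^*$. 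Similarly, the object $r_d$ is not invertible in $\Cscr^*$: rigidity furnishes a dual $r_d^*$, but one-dimensionality of $F^*(r_d)$ does not force $r_d^*\cong r_d^{-1}$ in a free category.

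What is actually needed, and what the paper emphasises in the sentence ``Because one is only interested in the resulting $\coend$, there is again a lot of extra freedom'', is only that the \emph{coends} agree, not the categories. Once you verify that $\Dscr$ is rigid (this part of your argument is fine, and is exactly where the non-degenerate Frobenius pairings enter), the universal property of $\Cscr^*$ yields a monoidal functor $\Cscr^*\to\Dscr$ compatible with the fibre functors, hence a Hopf algebra map $\coend_{\Dscr}(G)\to\coend_{\Cscr^*}(F^*)\cong\uaut(A)$. The remaining (and substantial) work is to show this map is an isomorphism. This is done not by producing an inverse functor, but by comparing the explicit algebra presentations produced by the procedure of \S\ref{sec:alg-pres}: the extra generating objects $r_2,\ldots,r_d$ of $\Dscr$ contribute redundant generators (expressible via the morphisms $r_i\to r_1^{\otimes i}$ in terms of the matrix coefficients of $r_1$), and the single morphism $r_a r_d^{-1} r_{d-a}\to 1$ together with invertibility of $r_d$ imposes precisely the antipode relations of the Hopf envelope. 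The detailed bookkeeping is what \cite[Appendix~A]{MR3570144} carries out.
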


Note that $\Dscr$ only has a finite number of generating objects, so in particular, $\uaut(A)$ is finitely generated. This is a consequence of AS-regularity: it ensures that $\dim(A^!_d)=1$, and that the obvious inclusions
\begin{equation}
(A^!_d)^* \hookrightarrow (A^!_a)^* \otimes (A^!_{d-a})^*
\end{equation}
define non-degenerate pairings between $(A^!_a)^*$ and $(A^!_{d-a})^*$ (cfr. Lemma \ref{lem:koszulfrobas}). This ensures one only has to formally invert a single object in order to construct a pair $(\Dscr,G)$ such that 
\begin{equation}
\coend_{\Dscr}(G)\cong \coend_{\Cscr^*}(F^*) \cong \uaut(A).
\end{equation}
Using Proposition \ref{prop:pres-aut} one can, in fact, check that $\uaut(A)$ is generated by $(z_{ij})_{i,j=1}^d$ (corresponding to the matrix coefficients of $r_1$) and the inverse of a group-like element $\delta$ (corresponding to the matrix coefficient of $r_d$). The relations among these generators are somewhat cumbersome to write down explicitly, so we illustrate them in a special
case.

\begin{example}
\label{ex:two}
Let $A=\mathbb{K}[x,y]$, then 
\begin{equation}
R \subset V \otimes V: r \mapsto x\otimes y-y\otimes x, 
\end{equation}
and $d=2$. Applying the procedure outlined in Section \ref{sec:alg-pres} to $(\Dscr,G)$ from Proposition \ref{prop:pres-aut} we find that $\aut(A)$ is generated, as an algebra, by $a,b,c,d,\delta^{-1}$ with the following relations
\begin{equation}
\label{ref-A.5-161}
\begin{aligned}
ac-ca & =  0 =  bd-db, \\ 
ad-cb & =  \delta  =  da-bc,\\
\delta \delta^{-1} & =  1  =  \delta^{-1}\delta,\\ 
a\delta^{-1}d-b\delta^{-1}c & =  1  =  d\delta^{-1}a-c\delta^{-1}b,\\
b\delta^{-1}a-a\delta^{-1}b & =  0  =  c\delta^{-1}d-d\delta^{-1}c.
\end{aligned}
\end{equation}
Unsurprisingly $\uaut(A)$, like $\uend(A)$, has exponential growth.
\end{example}

\section{Highest weight categories and quasi-hereditary coalgebras}
\label{sec:qh}
The Hopf algebra $\uaut(A)$ satisfies the following universal property, analogous to the one for $\uend(A)$ considered in \cite[Ch. 4,5]{MR1016381}.

\begin{proposition}
\label{prop:universal}
If $H$ is a Hopf algebra and $A$ is an $H$-comodule algebra given by $f:A \to H \otimes A$ such that $f(A_n)\subset H\otimes A_n$, then there is a 
	unique morphism of Hopf algebras $g:\uaut(A) \to H$ such that the diagram 
	$$
	\begin{tikzcd}
	A \drar[swap]{f} \rar{\delta_A} & \uaut(A) \otimes A \dar[dashed]{g \otimes 1} \\
	& H \otimes A
	\end{tikzcd}
	$$
	commutes.
\end{proposition}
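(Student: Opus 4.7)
The plan is to exploit the Tannakian description $\uaut(A)\cong\coend_{\Cscr^*}(F^*)$ from Corollary \ref{cor:aut-coend}, so that the universal property of $\uaut(A)$ becomes a combination of the universal property of the rigidification $\Cscr^*$ and the functoriality of $\coend$. Throughout, let $\Cscr=\langle r_1,r_2\mid r_2\to r_1\otimes r_1\rangle_\otimes$ and $F\colon\Cscr\to\Vect_{\mathbb{K}}$ be the monoidal category and monoidal functor from Section \ref{section:tk-end}.

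First I would package the hypothesis into a monoidal functor $\Phi\colon\Cscr\to\comod(H)$. The condition $f(A_n)\subset H\otimes A_n$ says $f$ restricts to an $H$-comodule structure on $V=A_1$; since $f$ is an algebra map, the induced coaction on $V\otimes V$ (which is compatible with the multiplication $V\otimes V\twoheadrightarrow A_2$) preserves the kernel $R$ of this multiplication, so $R\hookrightarrow V\otimes V$ is a morphism in $\comod(H)$. Define $\Phi$ by $r_1\mapsto V$, $r_2\mapsto R$, and the generating morphism $r_2\to r_1\otimes r_1$ to the inclusion.

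Next, since $H$ is a Hopf algebra, $\comod(H)$ is rigid monoidal; by the universal property of the rigidification (the proposition stated just before Corollary \ref{cor:aut-coend}), $\Phi$ extends uniquely to a monoidal functor $\Phi^*\colon\Cscr^*\to\comod(H)$. Post-composing $\Phi^*$ with the forgetful functor $\comod(H)\to\Vect_{\mathbb{K}}$ yields a monoidal functor to $\Vect_{\mathbb{K}}$ whose restriction to $\Cscr$ equals $F$, and hence equals $F^*$ by the uniqueness in the universal property of $\Cscr^*$. So $F^*$ factors through $\comod(H)$, which is the same datum as a bialgebra morphism $g\colon\coend_{\Cscr^*}(F^*)\to H$ by the defining universal property of $\coend$; explicitly, on the matrix coefficients $(z_{kij})$ produced by the recipe of Section \ref{sec:alg-pres}, $g$ sends $z_{kij}$ to the matrix coefficient of the $H$-coaction on $\Phi^*(r_k)$. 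This $g$ is automatically a Hopf algebra morphism since both source and target are Hopf algebras.

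Commutativity of the required diagram then follows by construction: restricted to $A_1=V$, $g(z_{1ij})$ gives the matrix coefficients of the $H$-coaction $f|_V$, which is precisely $(g\otimes 1)\circ\delta_A=f$ in degree one; the identity in higher degrees is automatic because $A$ is generated in degree one and $f$, $\delta_A$ are algebra maps. Uniqueness of $g$ is forced by Proposition \ref{prop:pres-aut}, which identifies the $z_{1ij}$ together with the grouplike $\delta^{-1}$ as algebra generators of $\uaut(A)$: the diagram fixes the images of the $z_{1ij}$, and $\delta^{-1}$ is then determined by the requirement that $g$ be a morphism of Hopf algebras. The one substantive obstacle is Step 2, the extension of $\Phi$ from $\Cscr$ to $\Cscr^*$: this requires the target to be rigid, which is precisely where the hypothesis that $H$ is a Hopf algebra (rather than merely a bialgebra) enters, and it is the feature that distinguishes this universal property from the one for $\uend(A)$.
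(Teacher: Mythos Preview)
The paper does not actually supply a proof of this proposition: it is stated as the Hopf-algebra analogue of Manin's universal property for $\uend(A)$ and is then used immediately. Implicitly the argument is two lines: by Manin's result (cited in \S\ref{section:tk-end}) the graded comodule-algebra structure $f$ factors uniquely through a bialgebra map $\uend(A)\to H$, and since $\uaut(A)=H(\uend(A))$ is the Hopf envelope, this bialgebra map extends uniquely to a Hopf algebra map $\uaut(A)\to H$.

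Your Tannakian argument is a correct and genuinely different route, and it fits the spirit of the paper well: you replace ``bialgebra $\Rightarrow$ Hopf envelope'' by ``$\Cscr\Rightarrow\Cscr^*$'' and read off the map from the functoriality of $\coend$. Two small points. First, you write that $\comod(H)$ is \emph{rigid}; all you need (and all the rigidification proposition gives for $\Cscr^*$) is one-sided duals, which is exactly what an antipode provides---full rigidity would require $S$ invertible, which is not assumed. Second, your uniqueness argument via Proposition~\ref{prop:pres-aut} silently imports the Koszul AS-regular hypothesis, whereas the statement here is for arbitrary quadratic $A$. Uniqueness is more cleanly obtained from the same universal properties you already used for existence: the factorization of $\Phi$ through $\Cscr^*$ is unique, and the induced coalgebra map out of $\coend_{\Cscr^*}(F^*)$ is unique by the defining property of $\coend$. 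With those adjustments your proof goes through without appealing to any finiteness of the presentation.
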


For $A=\Sym_{\mathbb{K}}(V)$ a polynomial ring, Proposition \ref{prop:universal} ensures that the coaction
\begin{equation}
A \to \Oscr(\GL(V)) \otimes A
\end{equation}
induced by the standard action of $\GL(V)$ on $V$, factors through $\uaut(A)$. Hence, there is a natural functor 
\begin{equation}
\comod(\Oscr(\GL(V))) \to \comod(\uaut(A)),
\end{equation}
and since (as we saw in Example \ref{ex:coordinatering}) $\comod(\Oscr(\GL(V)))$ is equivalent to $\Rep_{\mathbb{K}}(\GL(V))$, this suggests that $\uaut(A)$ has an interesting representation theory. 

In arbitrary characteristic, $\Rep_{\mathbb{K}}(\Gl(V))$ is an important example of a highest weight category (see Section \ref{sec:highest} for a definition), and the main result of \cite{MR3570144} asserts that $\comod(\uaut(A))$ is also a highest weight category, for any Koszul, Artin-Schelter regular algebra $A$.

\subsection{Representations of the general linear group}
\label{sec:gln}
Consider the category $\Rep_{\mathbb{K}}(G)$ of rational (right) representations\footnote{As before, a representation is by definition finite-dimensional.} of $G=\GL(V)$, for $\dim(V)=n$. By definition, a representation $X$ of $G$ is \textit{rational}\index{representation! rational} if for some (and hence every) basis $e_1, \ldots, e_m$ of $X$, we have
\begin{equation}
\label{eq:pol}
e_i \cdot g=\sum_{j=1}^m e_jf_{ij}(g) \text{ for all } g \in G \text{ and } i=1, \ldots, m,
\end{equation}
for some coefficient functions $f_{ij} \in \Oscr(G)$. A rational representation $X$ can be given the structure of a left comodule by defining the coaction to be
\begin{equation}
\delta:X \to \Oscr(G) \otimes X:e_i \mapsto \sum_{j=1}^m f_{ij} \otimes e_j,
\end{equation}
and this association defines an equivalence of categories between finite-dimensional $\Oscr(G)$-comodules and rational representations of $G$.

The inclusion of monoids $G \hookrightarrow M_n(\mathbb{K})$ induces an inclusion of bialgebras
\begin{equation}
\Oscr(M_n) \hookrightarrow \Oscr(G):\mathbb{K}[x_{11},\ldots,x_{nn}] \hookrightarrow \mathbb{K}[x_{11},\ldots,x_{nn},\determinant^{-1}],
\end{equation}
where $x_{ij}(g)$ is the $ij$-th entry of the matrix $g$ and $\determinant$ is the determinant function. Then a rational representation $X \in \Rep_{\mathbb{K}}(G)$ is called \textit{polynomial}\index{representation! polynomial} if the coefficient functions $f_{ij}$ in \eqref{eq:pol} all belong to $\Oscr(M_n)$. Now for any representation $V \in \Rep_{\mathbb{K}}(G)$, the representation $V \otimes (\wedge^n(V))^{\otimes m}$ is polynomial for some $m \geq 0$, so we can restrict ourselves to studying polynomial representations of $G$, or equivalently, $\Oscr(M_n)$-comodules.

If $\cha(k)=0$, it is a classical fact that $\Rep_{\mathbb{K}}(G)$ is semisimple: every representation in $\Rep_{\mathbb{K}}(G)$ is a direct sum of simple representations. The simple polynomial representations $L(\lambda)$ are classified by the set of partitions with at most $n$ rows
\begin{equation}
\Lambda=\{\lambda=(\lambda_1,\ldots,\lambda_n) \in \NN^n\mid\lambda_1 \geq \lambda_2 \geq \cdots \geq \lambda_n\}.
\end{equation}
This collection can be upgraded to a poset by setting 
\begin{equation}
\lambda \leq \mu \Longleftrightarrow \sum_{j=1}^k \lambda_j \leq \sum_{j=1}^k \mu_j \text{ for all } k=1,\ldots,n.
\end{equation}

Now, consider the subgroups $T \subset B \subset G$ of diagonal and lower triangular matrices. Denoting the simple representation corresponding to $\lambda \in \Lambda$ by $L(\lambda)$, these simple representations can be explicitly constructed as 
\begin{equation}
L(\lambda)=\induced_B^G(\lambda),
\end{equation}
i.e., one considers $\lambda$ as a one-dimensional representation of $T$, extends it to a $B$-representation by letting the unipotent part act trivially, and then induces to a representation of $G$. 

\begin{example}
If $\dim(V)=2$, then for $\lambda=(\lambda_1,\lambda_2) \in \Lambda$, we have
\begin{equation}
L(\lambda)=\induced_B^G(\lambda)=\Sym^{\lambda_1-\lambda_2}(V) \otimes \wedge^2(V)^{\otimes \lambda_2},
\end{equation}
so any simple rational $G$-representation is isomorphic to the tensor product of an integer power of the determinant representation and a symmetric power of the tautological representation.
\end{example}

In positive characteristic the situation is not nearly as easy, as the following simple example shows.

\begin{example}
Assume that $\cha(k)=p>0$, and $V=\mathbb{K}e_1 + \mathbb{K}e_2$. Then 
\begin{equation}
\induced_B^G((p,0))=\Sym^p(V)=\sum_{i=0}^p \mathbb{K}e_1^{p-i}e_2^i
\end{equation}
contains the two-dimensional simple subrepresentation $L=\mathbb{K}e_1^p + \mathbb{K}e_2^p$. Note that $L$ is not even a direct summand of $\Sym^p(V)$.
\end{example}

In particular, we see that $\Rep_{\mathbb{K}}(G)$ is no longer semisimple. It is, however, still true that the simple representations are classified by $\Lambda$, and occur as subrepresentations of the induced representations. For any $\lambda \in \Lambda$, denote $\lambda^*:=-w_0\lambda$, where $w_0$ denotes the longest element in the Weyl group.

\begin{theorem}
For any simple polynomial representation $L$ of $G$, there is a unique $\lambda \in \Lambda$ such that 
\begin{align}
L &\cong \text{soc}(\induced_B^G(\lambda)) \\
& \cong \text{top}(\induced_B^G(\lambda^*)^*).
\end{align}
The simple representation corresponding to $\lambda$ is denoted $L(\lambda)$.
\end{theorem}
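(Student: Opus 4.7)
The proof follows the classical highest-weight program for reductive algebraic groups, combining weight-space analysis for the diagonal torus $T$ with Frobenius reciprocity for the induction functor $\induced_B^G$.

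For the first isomorphism, I would decompose a simple polynomial representation $L$ into weight spaces $L = \bigoplus_\mu L_\mu$ for $T$. Since $L$ is finite-dimensional and polynomial, the weight set $\Psi(L) \subset \NN^n$ is finite and non-empty, so it admits a maximal element $\lambda \in \Lambda$ under the dominance order. A non-zero vector $v \in L_\lambda$ is annihilated by every root subgroup of the unipotent radical opposite to $B$, by maximality of $\lambda$. Thus $\mathbb{K} v$ (or, after dualizing, its natural complement in $L^*$) affords a one-dimensional $B$-module of weight $\lambda$, and Frobenius reciprocity
\[
\Hom_G(L, \induced_B^G(\lambda)) \cong \Hom_B(L|_B, \mathbb{K}_\lambda)
\]
then produces a non-zero, hence injective, $G$-equivariant map $L \hookrightarrow \induced_B^G(\lambda)$.

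To identify the image with the socle, I would prove that $\induced_B^G(\lambda)$ has a simple socle. This reduces to two facts about the geometric realization $\induced_B^G(\lambda) \cong H^0(G/B, \Lscr_\lambda)$: the $\lambda$-weight space is one-dimensional, and all other weights of $\induced_B^G(\lambda)$ are strictly less than $\lambda$ under dominance. Any non-zero simple $G$-submodule must then contain the unique (up to scalar) $\lambda$-weight vector, so there is a unique such submodule. Uniqueness of $\lambda$ in the statement is then immediate: $\lambda$ is recovered from $L$ as $\max \Psi(L)$.

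The second isomorphism follows by duality. The contravariant endofunctor $M \mapsto M^*$ on $\Rep_{\mathbb{K}}(G)$, with $G$-action twisted by $g \mapsto g^{-1}$, is exact and interchanges socle and top. Applying it to the embedding $L(\lambda^*) \hookrightarrow \induced_B^G(\lambda^*)$ produced above yields a surjection $\induced_B^G(\lambda^*)^* \twoheadrightarrow L(\lambda^*)^*$ whose target is simple, hence equals the top. The identification $L(\lambda^*)^* \cong L(\lambda)$ reduces to weight bookkeeping: the highest weight of $M^*$ is the negative of the lowest weight of $M$, the lowest weight of $L(\mu)$ is $w_0 \mu$ (by applying the first part of the argument with $B$ replaced by its opposite Borel), and $-w_0 \lambda^* = -w_0(-w_0 \lambda) = \lambda$.

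The principal obstacle is the socle computation for $\induced_B^G(\lambda)$. Establishing that $\lambda$ occurs with multiplicity one and dominates every other weight of $\induced_B^G(\lambda)$ requires the geometric input via sections of $\Lscr_\lambda$ on the flag variety, or equivalently, a direct analysis using the Bruhat decomposition of $G/B$. Everything else is a formal consequence of Frobenius reciprocity, weight decompositions, and the duality $M \mapsto M^*$.
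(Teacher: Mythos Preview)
The paper does not supply a proof of this theorem; it is quoted as a classical result from the representation theory of reductive groups, serving as background for the later discussion of highest weight categories. There is therefore no paper proof to compare against.

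Your outline is the standard argument and is correct in substance. Two small points are worth tightening. First, you write that the finite weight set ``admits a maximal element $\lambda$'' under dominance; what you actually need, and what your later steps use, is that a simple module has a \emph{unique} highest weight with one-dimensional weight space. This follows because any $U^+$-fixed weight vector generates $L$ under $B$ (hence under $G$), so two such vectors of distinct weights would contradict simplicity; you should say this explicitly. Second, your phrase ``annihilated by every root subgroup of the unipotent radical opposite to $B$'' is slightly at odds with the paper's convention that $B$ is lower triangular: the highest weight vector is fixed by the \emph{upper} unipotent $U^+$, which is indeed the unipotent radical opposite to $B$, but the Frobenius reciprocity step then needs a $B$-quotient $L \twoheadrightarrow \mathbb{K}_\lambda$, not a $B$-submodule. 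This is exactly what the highest weight gives you (the sum of all strictly lower weight spaces is $U^-$-stable), but your sentence about ``annihilated'' suggests a submodule rather than a quotient, so the logical flow could be clarified.

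Your identification of the main difficulty --- the simplicity of the socle of $\induced_B^G(\lambda)$, via multiplicity one of the $\lambda$-weight space in $H^0(G/B,\Lscr_\lambda)$ --- is accurate, and your duality argument for the second isomorphism is correct.
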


This seems like good news, but in fact this information is not even explicit enough to determine the characters of the simple representations. Indeed, to determine these characters is one of the main motivating problems in the field of modular representation theory of reductive algebraic groups.

The theorem does, however, suggest that the representations $\induced_B^G(\lambda)$ and $\induced_B^G(\lambda^*)^*$ still play an important role. For this reason they are denoted $\nabla(\lambda)$ (respectively $\Delta(\lambda)$) and are called \textit{costandard}\index{representation! costandard} (respectively \textit{standard}\index{representation! standard}) representations. We will mostly focus on the costandard representations, since the standard ones are their dual (for a precise statement, see Proposition \ref{prop:dualexc}). Their characters can be computed using the Weyl character formula, and they have the following important properties.

\begin{proposition}
\label{prop:nabla}
For all $\lambda \in \Lambda$:
\begin{enumerate}
\item $\End_{G}(\nabla(\lambda)) \cong \mathbb{K}$,
\item $\Hom_G(\nabla(\lambda),\nabla(\mu)) \neq 0 \Rightarrow \lambda \geq \mu$,
\item $\Ext^1_G(\nabla(\lambda),\nabla(\mu)) \neq 0 \Rightarrow \lambda > \mu$.
\item \label{ext-vanishing}$\Ext^i_G(\Delta(\lambda),\nabla(\mu))=0$ for $i > 0$
\end{enumerate}
\end{proposition}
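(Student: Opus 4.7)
The plan is to base all four statements on two cornerstones of reductive group cohomology: the presentation $\nabla(\lambda) = \induced_B^G \mathbb{K}_\lambda$ and Kempf's vanishing theorem, which asserts $R^i\induced_B^G\mathbb{K}_\lambda = 0$ for dominant $\lambda\in\Lambda$ and $i>0$. Together these promote ordinary Frobenius reciprocity to the derived identity
\[
\Ext^i_G(M,\nabla(\lambda)) \cong \Ext^i_B(M|_B,\mathbb{K}_\lambda)
\]
for every rational $G$-module $M$, which will be the workhorse for parts (3) and (4).

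I would first dispatch (1) and (2) with no cohomology at all. The theorem just preceding the proposition, together with the Weyl character formula, shows that $\nabla(\lambda)$ has simple socle $L(\lambda)$ of multiplicity one, and all other composition factors are of the form $L(\nu)$ with $\nu<\lambda$. If $f\colon\nabla(\lambda)\to\nabla(\mu)$ is nonzero, then $\im(f)$ is a nonzero submodule of $\nabla(\mu)$ and hence contains the socle $L(\mu)$; being simultaneously a quotient of $\nabla(\lambda)$, this forces $L(\mu)$ to be a composition factor of $\nabla(\lambda)$, so $\mu\leq\lambda$, proving (2). For (1), Schur's lemma produces $c\in\mathbb{K}$ with $f|_{L(\lambda)}=c\cdot\id$; the map $f-c\cdot\id$ then kills the socle, and the same image argument combined with multiplicity one of $L(\lambda)$ forces $f=c\cdot\id$.

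For (3), the derived reciprocity reduces the problem to $\Ext^1_B(\nabla(\lambda),\mathbb{K}_\mu)$. Writing $B=TU$ with $T$ a torus, Hochschild-Serre identifies this with $H^1\bigl(U,\Hom_{\mathbb{K}}(\nabla(\lambda),\mathbb{K}_\mu)\bigr)^T$, essentially a $T$-weight count on $U$-cocycles. Since every weight of $\nabla(\lambda)$ is $\leq\lambda$ and the $\lambda$-weight space is $U$-fixed (and therefore contributes only to the $\Hom$-term in degree zero), a nontrivial class can occur only at target weight $\mu$ strictly below $\lambda$.

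The main obstacle is (4). Writing $\Ext^i_G(\Delta(\lambda),\nabla(\mu)) = H^i\bigl(G,\Delta(\lambda)^*\otimes\nabla(\mu)\bigr)$ and using the identification $\Delta(\lambda)^*\cong\nabla(-w_0\lambda)$, the question becomes computing $H^i\bigl(G,\nabla(-w_0\lambda)\otimes\nabla(\mu)\bigr)$. The crucial input is that the tensor product of two costandard modules admits a \emph{good filtration}, that is, a filtration whose successive quotients are themselves costandards $\nabla(\nu)$: in characteristic zero this is immediate from semisimplicity, but in positive characteristic it is the celebrated theorem of Wang, Donkin and Mathieu. Granted this, Kempf's vanishing applied to each filtration quotient gives $H^i=0$ for $i>0$, and the result follows by d\'evissage. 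The hard step is thus invoking Wang-Donkin-Mathieu; once available, everything else is formal.
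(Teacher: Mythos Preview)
The paper states Proposition~\ref{prop:nabla} without proof; it is quoted as a standard fact from the modular representation theory of reductive groups, so there is nothing to compare against directly. I will therefore assess your argument on its own merits.

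Your treatment of (1) and (2) is correct. Note only that the composition-factor information you invoke---$[\nabla(\lambda):L(\lambda)]=1$ and other factors $L(\nu)$ satisfy $\nu<\lambda$---is precisely what the paper records as Corollary~\ref{cor:compcostandard}, \emph{deduced from} Proposition~\ref{prop:nabla}. Your argument is not circular, since that information is obtainable directly from weight considerations and the Weyl character formula as you indicate, but you are using a different logical order from the paper.

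Your sketch of (3) is in the right spirit but the endgame is too quick. After reducing to $\Ext^1_B(\nabla(\lambda),\mathbb{K}_\mu)$ and invoking Hochschild--Serre, you still owe the reader the case $\mu=\lambda$, where one must argue that the $\lambda$-weight line contributes only to $H^0$ and nothing to $H^1$; this needs a word about the filtration of $\nabla(\lambda)|_B$.

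The main issue is (4). First, a precision point: ``Kempf's vanishing applied to each filtration quotient gives $H^i=0$'' is not literally correct. Kempf says $R^i\induced_B^G\mathbb{K}_\nu=0$; what you need is $H^i(G,\nabla(\nu))=0$, which by your own derived reciprocity equals $H^i(B,\mathbb{K}_\nu)$, and the vanishing of the latter for dominant $\nu$ requires the same weight argument on $H^\bullet(U,\mathbb{K})$ that you gestured at in (3). This is fillable, but it is an extra step, not Kempf alone. More substantially, invoking the Wang--Donkin--Mathieu theorem here inverts the usual logical flow: the Ext-vanishing (4) is the elementary input that underlies the cohomological criterion for good filtrations, whereas Mathieu's theorem is a deep result typically established much later. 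The standard direct proof (e.g.\ Jantzen, \emph{Representations of Algebraic Groups}, II.4.13) avoids this: one combines derived Frobenius reciprocity with the tensor identity $\induced_B^G(M)\otimes N\cong\induced_B^G(M\otimes N|_B)$ to reduce to a $B$-cohomology computation that can be handled by weights alone, with no appeal to tensor products of costandards. Your route is valid in principle but uses a sledgehammer where a direct argument suffices.
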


\begin{corollary}
\label{cor:compcostandard}
Denoting by $[-:-]$ composition multiplicities, we find that $[\nabla(\lambda):L(\lambda)]=1$ and if $[\nabla(\lambda):L(\mu)] \neq 0$, then $\lambda \geq \mu$.
\end{corollary}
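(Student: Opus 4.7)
The plan is to derive both assertions from weight-space considerations, leveraging the Weyl character formula for $\nabla(\lambda)$ mentioned in the paragraph preceding Proposition \ref{prop:nabla}. The two standard facts I would extract are: (a) the Weyl character formula implies that the $\lambda$-weight space of $\nabla(\lambda)$ is one-dimensional and that every weight of $\nabla(\lambda)$ is bounded above by $\lambda$ in the dominance order on partitions (of the same total degree, which is forced by the central character action of the center of $G$); and (b) for any dominant weight $\mu$, the simple module $L(\mu)$ has $\mu$ as its unique highest weight with multiplicity one, and every other weight of $L(\mu)$ is strictly smaller.

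For the inequality claim, I would fix a composition series $0 = M_0 \subset M_1 \subset \cdots \subset M_k = \nabla(\lambda)$ of $\nabla(\lambda)$. Suppose $M_i/M_{i-1} \cong L(\mu)$; then using exactness of the $T$-weight-space functor on the short exact sequence $0 \to M_{i-1} \to M_i \to L(\mu) \to 0$, a highest-weight vector of weight $\mu$ in $L(\mu)$ lifts to a nonzero weight vector of weight $\mu$ in $M_i \subset \nabla(\lambda)$. Hence $\mu$ is a weight of $\nabla(\lambda)$, and fact (a) forces $\mu \leq \lambda$.

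For the multiplicity claim, I would compute $\dim \nabla(\lambda)_\lambda$ along the same composition series. By the previous step every composition factor $L(\mu_i)$ satisfies $\mu_i \leq \lambda$, and by fact (b) the weight $\lambda$ occurs in $L(\mu_i)$ only when $\mu_i = \lambda$, in which case it contributes exactly one dimension. Thus $\dim \nabla(\lambda)_\lambda = [\nabla(\lambda):L(\lambda)]$, and fact (a) makes this equal to $1$.

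The one real obstacle is a bookkeeping check: one must confirm that the dominance order on partitions defined in Section \ref{sec:gln} agrees with the standard root-sum order on weights when restricted to weights of fixed total size. This is a routine translation via the simple roots $\epsilon_i - \epsilon_{i+1}$: writing $\lambda - \mu = \sum_k a_k(\epsilon_k - \epsilon_{k+1})$ one checks that $a_k = \sum_{j \leq k}(\lambda_j - \mu_j)$, so non-negativity of all partial sums is exactly the condition for $\lambda - \mu$ to lie in the positive root cone.
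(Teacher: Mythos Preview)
Your proof is correct. The paper states Corollary~\ref{cor:compcostandard} without an explicit argument, but the sentence immediately preceding Proposition~\ref{prop:nabla} points out that the characters of the $\nabla(\lambda)$ are given by the Weyl character formula; your weight-space argument is exactly the standard way to extract composition-factor bounds from that character information, so this is essentially the intended derivation.
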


This, in turn, indicates that the category of representations filtered by the $\nabla(\lambda)$ plays an important role. Denote by $\Fscr(\nabla)$ the exact subcategory of $\Rep_{\mathbb{K}}(G)$ consisting of representations filtered by the $\nabla(\lambda)$, where $\lambda \in \Lambda$. We similarly define the category $\Fscr(\Delta)$. The following deep result is due to Mathieu.

\begin{theorem}\cite{MR1054234}
\label{th:mat}
If $X,Y \in \Fscr(\nabla)$, then also $X \otimes_{\mathbb{K}} Y \in \Fscr(\nabla)$. The same result holds for $\Fscr(\Delta)$.
\end{theorem}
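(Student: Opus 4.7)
My plan is to use a cohomological criterion for membership in $\Fscr(\nabla)$ and then reduce the tensor product statement to a cohomology vanishing on the flag variety $G/B$. The starting point is item (4) of Proposition \ref{prop:nabla}, which together with a standard induction on the length of a good filtration yields the following criterion: a finite-dimensional rational $G$-module $X$ lies in $\Fscr(\nabla)$ if and only if $\Ext^1_G(\Delta(\lambda), X) = 0$ for every $\lambda \in \Lambda$; once this holds, all higher $\Ext^i_G(\Delta(\lambda), X)$ vanish by dimension shifting. The statement for $\Fscr(\Delta)$ is dual to the one for $\Fscr(\nabla)$, so it suffices to handle the latter.

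Given this criterion, to show that $X \otimes Y \in \Fscr(\nabla)$ I would induct on the filtration lengths of $X$ and $Y$, reducing to proving
\begin{equation}
\Ext^i_G(\Delta(\lambda), \nabla(\mu) \otimes \nabla(\nu)) = 0 \quad \text{for all } \lambda, \mu, \nu \in \Lambda \text{ and } i > 0.
\end{equation}
Using Kempf's vanishing theorem together with the identification $\nabla(\mu) = H^0(G/B, \Lscr_\mu)$ and an analogous description of $\Delta(\lambda)$, this can be translated, via the tensor identity and an induction/restriction spectral sequence, into a statement of the form
\begin{equation}
H^i(G/B, \nabla(\nu) \otimes \Lscr_\mu) = 0 \quad \text{for } i > 0,
\end{equation}
and ultimately into a cohomological vanishing for line bundles on $G/B \times G/B$ pulled back through the diagonal embedding.

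The main obstacle, and the reason the theorem is genuinely deep, is precisely this last geometric vanishing: it does \emph{not} follow from Kempf's theorem applied to $G/B$ alone, since the relevant sheaf on $G/B \times G/B$ is not pulled back from a homogeneous line bundle on a single flag variety. The standard route is to produce a Frobenius splitting of $G/B \times G/B$ compatible with the diagonal embedding; in characteristic $p$ this forces the required higher cohomology to vanish, and the characteristic zero case then follows by a reduction mod $p$ argument together with upper semicontinuity of $\Ext$-dimensions. Everything else — setting up the cohomological criterion, running the inductive reduction, and passing between $\Fscr(\nabla)$ and $\Fscr(\Delta)$ — is essentially formal; all the real content sits in the Frobenius splitting input (or, equivalently, in Donkin's tilting module approach, which encodes the same geometric fact in the language of indecomposable tilting modules and their tensor products).
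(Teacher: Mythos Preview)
The paper does not prove this theorem; it states it with the citation to Mathieu and calls it ``deep'', without further argument. So there is no in-paper proof to compare against.

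Your outline is a fair sketch of the actual proof in the literature. The cohomological criterion you invoke (membership in $\Fscr(\nabla)$ is equivalent to $\Ext^1_G(\Delta(\lambda),-)=0$ for all $\lambda$) is the standard Donkin criterion, and the reduction to the case $X=\nabla(\mu)$, $Y=\nabla(\nu)$ via filtration length is routine. You are also right that the entire depth of the theorem lies in the geometric vanishing step, and that Frobenius splitting of $G/B\times G/B$ compatible with the diagonal (Mehta--Ramanathan) or Mathieu's closely related argument is what supplies it; the characteristic zero case indeed follows by reduction mod $p$. One small caution: the passage from $\Ext^i_G(\Delta(\lambda),\nabla(\mu)\otimes\nabla(\nu))=0$ to the vanishing $H^i(G/B,\nabla(\nu)\otimes\Lscr_\mu)=0$ is not a single tensor-identity step; the clean route is rather to prove directly that $\nabla(\mu)\otimes\nabla(\nu)$ is $B$-acyclic via the splitting and then invoke the criterion, rather than trying to rewrite each individual $\Ext$ group. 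But this is a matter of packaging, not a gap. In short, your proposal is not an alternative to the paper's proof but an unpacking of what the cited reference does.
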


A last class of representations which we will need, and which play an important role in the representation theory of $\GL(V)$, are the \textit{tilting representations}\index{representation! tilting}. 

\begin{definition}
The category $\Fscr(\nabla) \cap \Fscr(\Delta)$ of all modules having both a $\nabla$-filtration and a $\Delta$-filtration is called the category of tilting modules.
\end{definition}

The name is explained by the following proposition, which follows immediately from Proposition \ref{prop:nabla} \eqref{ext-vanishing}.

\begin{proposition}
For any $M \in \Fscr(\nabla) \cap \Fscr(\Delta)$,
\begin{equation}
\Ext^i_G(M,M)=0, \text{ for all } i>0.
\end{equation}
\end{proposition}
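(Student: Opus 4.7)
The plan is to reduce the Ext-computation to the vanishing statement in Proposition \ref{prop:nabla}\eqref{ext-vanishing} by exploiting the two filtrations on $M$ separately, one on each argument of $\Ext$.

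First, I would use that $M \in \Fscr(\Delta)$, so there is a finite filtration
\[
0 = M_0 \subset M_1 \subset \cdots \subset M_r = M
\]
whose successive quotients are of the form $\Delta(\lambda_j)$. I would induct on $r$: the short exact sequences $0 \to M_{j-1} \to M_j \to \Delta(\lambda_j) \to 0$ yield long exact sequences
\[
\cdots \to \Ext^i_G(\Delta(\lambda_j), N) \to \Ext^i_G(M_j, N) \to \Ext^i_G(M_{j-1}, N) \to \cdots
\]
for any $N$, so if $\Ext^i_G(\Delta(\lambda_j), N) = 0$ for all $i > 0$ and all $j$, then $\Ext^i_G(M, N) = 0$ for all $i > 0$.

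Second, I would apply the same dimension-shifting argument in the second variable. Since $M \in \Fscr(\nabla)$, choose a filtration with successive quotients $\nabla(\mu_k)$; the analogous long exact sequences show that the vanishing of $\Ext^i_G(\Delta(\lambda_j), \nabla(\mu_k))$ for all $i > 0$, $j$, $k$ implies $\Ext^i_G(\Delta(\lambda_j), M) = 0$ for all $i > 0$ and all $j$. Combining the two reductions yields $\Ext^i_G(M, M) = 0$ for $i > 0$.

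The only genuinely non-trivial input is Proposition \ref{prop:nabla}\eqref{ext-vanishing}, which provides exactly the base case $\Ext^i_G(\Delta(\lambda), \nabla(\mu)) = 0$ for $i > 0$ needed at the bottom of the double induction. There is no real obstacle beyond bookkeeping: the argument is entirely formal once this Ext-orthogonality between standards and costandards is in hand, and the finiteness of the filtrations (which holds since representations are finite-dimensional) ensures the induction terminates.
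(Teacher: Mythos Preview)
Your proposal is correct and is precisely the argument the paper has in mind: the paper simply states that the proposition ``follows immediately from Proposition~\ref{prop:nabla}\eqref{ext-vanishing},'' and your double filtration/long exact sequence reduction is exactly how one unpacks that claim. There is nothing to add.
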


\begin{proposition}
\label{prop:tiltingcomp}
For any indecomposable representation $T \in \Fscr(\nabla) \cap \Fscr(\Delta)$, there is a unique $\lambda \in \Lambda$ and exact sequences
\begin{equation}
0 \to K(\lambda) \to T \to \nabla(\lambda) \to 0,
\end{equation}
and 
\begin{equation}
0 \to \Delta(\lambda) \to T \to K'(\lambda) \to 0
\end{equation}
such that $K(\lambda)$ (respectively $K'(\lambda)$) has a filtration by $\nabla(\mu)$ (respectively $\Delta(\mu)$) with $\mu<\lambda$. This $T$ is denoted $T(\lambda)$\index{$T(\lambda)$}.
\end{proposition}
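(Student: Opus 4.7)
The plan is to invoke the Ringel classification of indecomposable tilting modules in a highest weight category: construct, for each $\lambda \in \Lambda$, an indecomposable tilting $T(\lambda)$ with the prescribed filtrations, and then use Krull--Schmidt to conclude that every indecomposable tilting is of this form.

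\textbf{Construction.} I would induct on $\lambda$ in the partial order. Set $M_0 := \Delta(\lambda) \in \Fscr(\Delta)$. At stage $i$, if $M_i$ is not yet in $\Fscr(\nabla)$, there is some $\mu$ with $\Ext^1(\Delta(\mu), M_i) \neq 0$; combining Proposition \ref{prop:nabla}(4) with the fact that every composition factor of $M_i$ has the form $L(\nu)$ with $\nu \leq \lambda$, one checks that any such $\mu$ must satisfy $\mu < \lambda$. Form the universal extension
$$0 \to M_i \to M_{i+1} \to \Delta(\mu)^{\oplus n_\mu} \to 0, \qquad n_\mu = \dim \Ext^1(\Delta(\mu), M_i),$$
so that $M_{i+1}$ remains in $\Fscr(\Delta)$ while $\Ext^1(\Delta(\mu), M_{i+1}) = 0$. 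The procedure terminates and produces $T(\lambda) := M_\infty \in \Fscr(\Delta) \cap \Fscr(\nabla)$, using the standard cohomological criterion that a module in $\Fscr(\Delta)$ with $\Ext^1(\Delta(\mu), -) = 0$ for all $\mu$ belongs to $\Fscr(\nabla)$.

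\textbf{Filtrations and indecomposability.} By construction $T(\lambda)$ admits a $\Delta$-filtration with $\Delta(\lambda)$ at the bottom and the remaining layers $\Delta(\mu)$ for $\mu < \lambda$. Dually, rearranging the $\nabla$-filtration using Proposition \ref{prop:nabla}(3) places $\nabla(\lambda)$ at the top with all remaining layers $\nabla(\mu)$ having $\mu < \lambda$, yielding the two required exact sequences. Indecomposability follows from $[T(\lambda) : L(\lambda)] = 1$: any idempotent $e \in \End(T(\lambda))$ is determined by its action on the unique copy of $L(\lambda)$ sitting simultaneously as the head of the bottom $\Delta(\lambda)$ and the socle of the top $\nabla(\lambda)$, so $e \in \{0, \id\}$.

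\textbf{Classification and uniqueness.} For an arbitrary indecomposable tilting $T$, let $\lambda$ be maximal in the support of $T$. The universal nature of the construction (viewing $T(\lambda)$ as minimal tilting containing $L(\lambda)$ in its socle) produces a map $T(\lambda) \to T$ that splits off a direct summand thanks to the Ext-vanishing for tilting modules. Indecomposability of $T$ then forces $T \cong T(\lambda)$, with $\lambda$ uniquely determined by $T$ as the maximum of its support.

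\textbf{Main obstacle.} The principal technical difficulty lies in the inductive construction: proving that the universal-extension procedure terminates in finitely many steps, and verifying at each step that $\mu$ is strictly below $\lambda$. Both ultimately rely on controlling $\Ext^1(\Delta(\mu), -)$ against modules supported below $\lambda$, for which Proposition \ref{prop:nabla}(4) is the essential input.
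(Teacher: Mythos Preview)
The paper does not actually prove Proposition~\ref{prop:tiltingcomp}; it is stated as a standard fact from the theory of highest weight categories (due to Ringel) and is invoked later without argument. So there is no ``paper's proof'' to compare against.

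Your outline is the standard Ringel argument and is correct in outline. A couple of places where the sketch would need tightening before it is a proof:

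\begin{itemize}
\item \textbf{Termination.} You flag this yourself, and it is indeed the point that requires care. The universal extension kills $\Ext^1(\Delta(\mu),-)$ for one $\mu$ at a time, but extending by further $\Delta(\mu')$ can a priori reintroduce the vanished $\Ext^1$. The clean fix is to proceed by induction on $\lambda$: for $\lambda$ minimal one has $T(\lambda)=\Delta(\lambda)=\nabla(\lambda)$; for general $\lambda$ one uses that $\{\mu:\mu<\lambda\}$ is finite (true for the dominance order on partitions) and that $\Ext^2(\Delta(\mu),\Delta(\nu))=0$ for $\mu\geq\nu$, which lets you order the extensions so that already-killed $\Ext^1$'s stay dead.
\item \textbf{The splitting.} Your last step is slightly compressed. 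Given an arbitrary indecomposable tilting $T$ with maximal weight $\lambda$, one rearranges the $\Delta$-filtration to put $\Delta(\lambda)$ at the bottom, then lifts the inclusion $\Delta(\lambda)\hookrightarrow T$ step by step along the construction of $T(\lambda)$, using $\Ext^1(\Delta(\mu),T)=0$ (since $T\in\Fscr(\nabla)$). The resulting map $T(\lambda)\to T$ is injective because $[T(\lambda):L(\lambda)]=1$ and $L(\lambda)$ survives in $T$; its cokernel lies in $\Fscr(\Delta)$, so $\Ext^1(\mathrm{coker},T(\lambda))=0$ and the sequence splits. Your phrase ``minimal tilting containing $L(\lambda)$ in its socle'' is suggestive but not quite the mechanism.
\end{itemize}

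With these two points filled in, the argument is complete and matches the standard literature proof.
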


In particular, we find that 
\begin{equation}
\addc\bigg(\bigoplus_{\lambda \in \Lambda}T(\lambda)\bigg)=\Fscr(\nabla) \cap \Fscr(\Delta),
\end{equation}
where $\addc(M)$\index{$\addc(-)$} denotes the category consisting of all representations isomorphic to direct summands of direct sums of $M$, and this module $T=\oplus_{\lambda \in \Lambda}T(\lambda)$ is called the \textit{characteristic tilting module}\index{module! characteristic tilting} (which is infinite dimensional). 

\begin{example}
For $\dim(V)=2$ and $i<p$, one has 
\begin{equation}
L((i,0))=\nabla((i,0))=\Delta((i,0))=T((i,0))=S^i(V).
\end{equation}
\end{example}

In general, these tilting representations are also hard to describe, but they can be related to more familiar representations. In fact,
\begin{equation}
\wedge^iV=\nabla(\lambda(i))=\Delta(\lambda(i))=L(\lambda(i))=T(\lambda(i))
\end{equation} 
for $i=0, \ldots, n$ and $\lambda(i)=(1, \ldots, 1, 0, \ldots, 0) \in \Lambda$ with $1$ appearing $i$ times. Hence, by Theorem \ref{th:mat} all tensor products of the $\wedge^i(V)$ are tilting representations.

\begin{theorem}
\label{th:tilting}
For $\lambda=(\lambda_1, \ldots, \lambda_n) \in \Lambda$, there is a decomposition
\begin{equation}
\wedge^{\lambda_1^t}V \otimes_{\mathbb{K}} \cdots \otimes_{\mathbb{K}} \wedge^{\lambda_l^t}V=T(\lambda) \oplus Y,
\end{equation}
where $Y$ is a direct sum of tilting representations $T(\mu)$ with $\mu < \lambda$, and moreover $\lambda^t=(\lambda_1^t, \ldots, \lambda_l^t)$ is the conjugate partition.
\end{theorem}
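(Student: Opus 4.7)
The plan is to combine Mathieu's tensor-product theorem with a character argument. First, each $\wedge^i V = T(\lambda(i))$ is tilting, as noted in the discussion preceding the theorem. By Theorem~\ref{th:mat}, tilting modules are closed under tensor products, so
\begin{equation}
M := \wedge^{\lambda_1^t} V \otimes_{\mathbb K} \cdots \otimes_{\mathbb K} \wedge^{\lambda_l^t} V
\end{equation}
is tilting. Proposition~\ref{prop:tiltingcomp} and Krull--Schmidt then give a unique decomposition $M \cong \bigoplus_{\mu \in \Lambda} T(\mu)^{\oplus m_\mu}$, so the theorem reduces to showing $m_\lambda = 1$ and $m_\mu = 0$ unless $\mu \leq \lambda$.

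Next I would track weights. By Proposition~\ref{prop:tiltingcomp}, $T(\mu)$ contains $\Delta(\mu)$ as a submodule and the quotient is $\Delta$-filtered by $\Delta(\nu)$ with $\nu < \mu$; since $\Delta(\mu)$ has Weyl character $s_\mu$, every dominant weight of $T(\mu)$ satisfies $\nu \leq \mu$, and the $\mu$-weight space of $T(\mu)$ is one-dimensional. On the other side, the $T$-character of $M$ is the product of elementary symmetric polynomials $e_{\lambda_1^t} \cdots e_{\lambda_l^t}$, and the classical expansion (equivalent to the dual Jacobi--Trudi identity, or to Kostka positivity)
\begin{equation}
e_{\lambda_1^t} \cdots e_{\lambda_l^t} \;=\; s_\lambda \;+\; \sum_{\mu < \lambda} K_{\mu^t,\lambda^t}\, s_\mu, \qquad K_{\mu^t,\lambda^t} \in \NN,
\end{equation}
shows that every dominant weight of $M$ is $\leq \lambda$ and that the $\lambda$-weight space of $M$ is one-dimensional.

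Finally, I would read off the multiplicities. If $m_\mu > 0$ then $\mu$ is a weight of $T(\mu)$ and hence of $M$, forcing $\mu \leq \lambda$. Moreover, $T(\mu)$ contributes to the $\lambda$-weight space of $M$ only when $\lambda$ is a weight of $T(\mu)$, which by the previous paragraph requires $\lambda \leq \mu$; combined with $\mu \leq \lambda$ this pins $\mu = \lambda$. Comparing $\lambda$-weight dimensions yields $m_\lambda = 1$, and the remaining summands package as $Y := \bigoplus_{\mu < \lambda} T(\mu)^{\oplus m_\mu}$, concluding the argument. The main obstacle is the character identity in the middle paragraph: the structural steps (Mathieu plus Krull--Schmidt) and the multiplicity extraction are essentially formal, while the substance lies in the symmetric-function fact that $e_{\lambda^t}$ expanded in the Schur basis has $s_\lambda$ as its leading term (with coefficient one) and only $s_\mu$ with $\mu < \lambda$ below it. This is a standard, characteristic-free statement, so it can be imported without fuss.
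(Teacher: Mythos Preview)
The paper states this theorem without proof; it is quoted as a standard fact from the modular representation theory of $\GL(V)$ (the result is essentially due to Donkin). Your argument is correct and is precisely the standard proof: Mathieu's theorem guarantees the tensor product is tilting, Krull--Schmidt plus Proposition~\ref{prop:tiltingcomp} reduces the problem to identifying the highest weight and its multiplicity, and the symmetric-function identity $e_{\lambda^t}=s_\lambda+\sum_{\mu<\lambda}K_{\mu^t,\lambda^t}\,s_\mu$ (obtained from the Kostka expansion of $h_{\lambda^t}$ by applying the involution $\omega$, which sends $h_i\mapsto e_i$, $s_\nu\mapsto s_{\nu^t}$, and reverses dominance) pins these down. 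Since there is nothing to compare against in the paper itself, there is no alternative route to discuss.
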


If we denote by 
\begin{equation}
\Vscr=\langle \wedge^iV \mid i=1, \ldots, n \rangle_{\otimes} \subset \Rep_{\mathbb{K}}(G)
\end{equation}
the full monoidal subcategory generated by the exterior powers of the standard representation, and by $F:\Vscr \to \Vect_{\mathbb{K}}$ the restriction of the forgetful functor, then 
\begin{equation}
\coend(F) \cong \Oscr(G).
\end{equation}
Let $\perf(\Vscr^{\opp})$\index{$\perf(\Vscr^{\opp})$} be the triangulated category of finite complexes of finitely generated projective right $\Vscr$-modules. 
Then $\perf(\Vscr^{\opp})$ has a natural structure of monoidal triangulated category by putting 
\begin{equation}
\Vscr(-,u)\otimes \Vscr(-,v)=\Vscr(-,uv)
\end{equation}
and extending to complexes. The functor $F$ extends to an exact monoidal functor
\begin{equation}
\label{ref-7.4-150}
F:\perf(\Vscr^{\opp})\r \derived^b(\Rep_{\mathbb{K}}(G)):\Vscr(-,u)\mapsto F(u).
\end{equation}

\begin{remark}
At the risk of
confusing various tensor products the functor $M$ can be written intrinsically as $-\Lotimes_{\Vscr}M$.
\end{remark}

\begin{theorem}
\label{th:derivedeq}
The functor 
\begin{equation}
	F:\perf(\Vscr^{\opp})\r \derived^b(\Rep_{\mathbb{K}}(G)):\Vscr(-,u)\mapsto F(u).
	\end{equation}
is an equivalence of monoidal triangulated categories.
\end{theorem}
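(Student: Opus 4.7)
The plan is to check that $F$ is fully faithful, essentially surjective, triangulated, and monoidal. Triangulatedness is built into the definition of $\perf(\Vscr^{\opp})$, and monoidality holds on representable generators by construction: $\Vscr(-,u)\otimes\Vscr(-,v)=\Vscr(-,uv)$ is sent to $F(uv)\cong F(u)\otimes F(v)$ because $F|_{\Vscr}$ is monoidal, and this extends triangulatedly to all of $\perf(\Vscr^{\opp})$. So the substantive content is the two equivalence conditions.

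For fully faithfulness, the representables $\Vscr(-,u)$ with $u\in\Vscr$ generate $\perf(\Vscr^{\opp})$ as a thick triangulated subcategory, so it suffices to show that the natural map
\begin{equation}
\Hom_{\perf(\Vscr^{\opp})}(\Vscr(-,u),\Vscr(-,v)[i])\longrightarrow\Ext^i_G(F(u),F(v))
\end{equation}
is an isomorphism for all $u,v\in\Vscr$ and all $i\in\ZZ$. By Yoneda the left-hand side equals $\Vscr(u,v)$ when $i=0$ and vanishes otherwise. Each exterior power $\wedge^kV$ equals the tilting module $T(\lambda(k))$, so by Mathieu's Theorem \ref{th:mat} every $F(u)$ lies in $\Fscr(\nabla)\cap\Fscr(\Delta)$. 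The $\Ext$-vanishing between $\Delta$- and $\nabla$-filtered objects, obtained by iterating the long exact sequences against Proposition \ref{prop:nabla}\eqref{ext-vanishing}, kills the higher $\Ext$'s; in degree zero the map is the tautological isomorphism coming from the fully faithful embedding $\Vscr\hookrightarrow\Rep_{\mathbb{K}}(G)$.

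For essential surjectivity, Theorem \ref{th:tilting} shows that every indecomposable tilting $T(\lambda)$ is a direct summand of $F(u_\lambda)=\wedge^{\lambda_1^t}V\otimes\cdots\otimes\wedge^{\lambda_l^t}V$ for some $u_\lambda\in\Vscr$; since idempotents split in $\perf(\Vscr^{\opp})$, every $T(\lambda)$ lies in the essential image. To extend this to an arbitrary finite-dimensional $M\in\Rep_{\mathbb{K}}(G)$, I would pick a finite saturated subset $\pi\subset\Lambda$ containing the weights of $M$, so that $\Rep_\pi(G)$ becomes the module category of a finite-dimensional quasi-hereditary algebra, in which every module admits a finite resolution by tilting modules. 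This places $M$ in the thick subcategory of $\derived^b(\Rep_{\mathbb{K}}(G))$ generated by the $T(\lambda)$, and hence in the essential image of $F$.

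The main obstacle is this last tilting resolution step: while finite tilting resolutions inside a finite saturated window are classical in highest weight theory, one must argue that every finite-dimensional rational representation admits such a window, and that the resulting resolution lifts compatibly along the globally defined functor $F$. Everything else reduces to the tilting and Mathieu input already assembled in Section \ref{sec:gln}.
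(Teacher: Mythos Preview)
Your argument is correct and is essentially an unpacking of the paper's one-line proof, which simply cites Theorem~\ref{th:tilting}, Proposition~\ref{prop:tiltingcomp}, and Corollary~\ref{cor:compcostandard}. Your treatment of full faithfulness (Yoneda on the left, tilting $\Ext$-vanishing on the right via Mathieu) is exactly what those citations encode.

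For essential surjectivity there is a small difference in packaging. You restrict to a finite saturated window and invoke finite tilting resolutions; the paper's citations instead suggest the direct inductive route: Proposition~\ref{prop:tiltingcomp} gives $\nabla(\lambda)$ as a cone on tiltings $T(\mu)$ with $\mu\le\lambda$, and Corollary~\ref{cor:compcostandard} then gives each $L(\lambda)$ in the triangulated hull of $\{\nabla(\mu):\mu\le\lambda\}$, so every finite-length object is reached. Both arguments are standard and amount to the same thing; the inductive route has the advantage that it dispenses with the ``window plus compatibility of lifts'' bookkeeping you flag as an obstacle, since the induction runs entirely inside $\derived^b(\Rep_{\mathbb{K}}(G))$ and never leaves the global functor.
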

\begin{proof}
This follows from combining Theorem \ref{th:tilting}, Proposition \ref{prop:tiltingcomp} and Corollary \ref{cor:compcostandard}. 
\end{proof}

Note that the tensor generators of $\Vscr$, which govern the derived category of $\Rep_{\mathbb{K}}(G)$ by Theorem \ref{th:derivedeq}, correspond to the terms in the Koszul resolution 
\begin{equation}
0 \to A \otimes \wedge^n V \to \cdots \to A \otimes \wedge^lV \to \cdots \to A \otimes V \to A \to \mathbb{K} \to 0
\end{equation}
of $A=\Sym_{\mathbb{K}}(V)$. We will use this in Section \ref{sec:repsaut} as a starting point to study the representation theory of $\uaut(A)$, for an arbitrary Koszul, Artin-Schelter regular algebra $A$.

\subsection{Highest weight categories and quasi-hereditary coalgebras}
\label{sec:highest}

A lot of the structure present in $\Rep_{\mathbb{K}}(G)$ can be formalised and gives rise to the notion of a \textit{highest weight category}\index{highest weight category}\footnote{This notion plays a very important role in Lie theory, see \cite{bernshtein1971structure,humphreys2008representations}}. It is this notion that we will be able to carry over to the noncommutative setting and $\uaut(A)$. Remember that a poset $(\Lambda,\leq)$ is called \textit{interval finite}\index{poset! interval finite} if for any $\lambda, \mu \in \Lambda$, the set $\{\psi \in \Lambda \mid \lambda \leq \psi \leq \mu\}$ is finite. 

To emphasize the analogy with algebraic groups, whenever dealing with coalgebras, we will use the term \textit{representation} as a synonym for a finite dimensional comodule.

\begin{definition}
\label{def:qh}
Let $C$ be a coalgebra and let $(\Lambda,\leq)$ be an interval finite poset indexing the simple representations. Then $C$ is \textit{quasi-hereditary} if there exist finite-dimensional comodules $\nabla(\lambda)$, for all $\lambda \in \Lambda$, such that:
\begin{enumerate}
\item $\End_C(\nabla(\lambda)) \cong \mathbb{K}$,
\item $\Hom_C(\nabla(\lambda),\nabla(\mu)) \neq 0 \Rightarrow \lambda \geq \mu$,
\item $\Ext^1_C(\nabla(\lambda),\nabla(\mu)) \neq 0 \Rightarrow \lambda > \mu$,
\item $C$ has a filtration with subquotients of the form $\nabla(\lambda)$. 
\end{enumerate}
The category $\comod(C)$ is a highest weight category and the comodules $\nabla(\lambda)$ will be called \textit{costandard comodules}\index{comodule! costandard}.
\end{definition}

\begin{remark}
\begin{enumerate}
\item  The poset $(\Lambda,\leq)$ is a part of the data defining a highest weight category. In particular, there can be different quasi-hereditary structures on the same underlying coalgebra.
\item There is no mention yet of a tensor product. Indeed, for an arbitrary coalgebra, $\comod(C)$ is not necessarily monoidal.
\item Definition \ref{def:qh} is not quite standard and relies on the standardization result of Dlab and Ringel \cite{MR1211481}. See \cite[Appendix A]{MR3713007} for a detailed comparison.
\end{enumerate}
\end{remark}

\begin{example}
\label{ex:directed}
If $C$ is a directed coalgebra, i.e., there is an ordering of the simple representations by a poset $\Lambda$ such that $\Ext^1_C(L(\lambda),L(\mu)) \neq 0 $ implies that $\lambda > \mu$, then one can take $\nabla(\lambda)=L(\lambda)$.
\end{example}

In fact, in some sense a quasi-hereditary coalgebra is a direct generalization of Example \ref{ex:directed}. Indeed, Definition \ref{def:qh} implies that the $\nabla(\lambda)$ form a full exceptional collection in $\derived^b(\comod(C))$. 

\begin{definition}
An object $X$ in a $\mathbb{K}$-linear triangulated category $\Tscr$ is called \textit{exceptional} if 
\begin{equation}
\Hom_{\Tscr}(X,X[r])=
\begin{cases}
\mathbb{K} & \text{ if } r=0,\\
0 & \text{ otherwise}.
\end{cases}
\end{equation}
A collection of objects $\{X_i\}_{i \in (I,\leq)}$ in a $\mathbb{K}$-linear triangulated category $\Tscr$, for some poset $(I,\leq)$, is called an \textit{exceptional collection}\index{exceptional collection} if each $X_i$ is exceptional and:
\begin{equation}
\Hom_{\Tscr}(X_p,X_q[r])=0,
\end{equation}
if $p>q$ and $r \in \mathbb{Z}$. An exceptional collection is called \textit{full} if it generates all of $\Tscr$.
\end{definition}

The full exceptional collection $(\nabla(\lambda))_{\lambda \in \Lambda}$ is special since it consists solely of objects living in the heart of the standard t-structure. 

\begin{definition}
For a full exceptional collection $(F_{\lambda})_{\lambda \in (\Lambda,\leq)}$ in a triangulated category $\Tscr$, there is a dual full exceptional collection $(E_{\lambda})_{\lambda \in (\Lambda,\leq^{\opp})}$, uniquely determined by 
\begin{equation}
\Hom(E_\lambda,F_{\mu}[i])=
\begin{cases}
\mathbb{K}&\text{if $\lambda=\mu$ and $i=0$,}\\
0&\text{otherwise.}
\end{cases}
\end{equation}
\end{definition}

\begin{proposition}
\label{prop:dualexc}
For a highest weight category $\comod(C)$, the exceptional collection dual to the costandard comodules consists again of indecomposable comodules. These comodules are called \textit{standard comodules}\index{comodule! standard}, and are denoted $\Delta(\lambda)$.
\end{proposition}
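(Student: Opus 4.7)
The plan is to identify the abstract dual exceptional collection with an explicit family of comodules, which will make indecomposability automatic. Since $(\nabla(\lambda))_{\lambda \in \Lambda}$ is a full exceptional collection in $\derived^b(\comod(C))$, a dual exceptional collection $(E_\lambda)_{\lambda \in \Lambda}$ exists in the derived category, and each $E_\lambda$ being exceptional forces $\End(E_\lambda) \cong \mathbb{K}$ and hence indecomposability. The substance of the proposition is therefore that each $E_\lambda$ is represented by a single comodule $\Delta(\lambda)$ in the heart of the standard $t$-structure.

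To produce such a $\Delta(\lambda)$ concretely, I would pass via the equivalence $\comod(C) \simeq \dis(C^*)$ to the pseudo-compact world, where projective covers $P(\lambda)$ of the simples $L(\lambda)$ exist. Define
\[
\Delta(\lambda) := P(\lambda)\Big/\sum_{\mu \not\leq \lambda}\operatorname{tr}_{P(\mu)}\bigl(P(\lambda)\bigr),
\]
the largest quotient of $P(\lambda)$ whose composition factors $L(\nu)$ all satisfy $\nu \leq \lambda$. By construction $\Delta(\lambda)$ has simple head $L(\lambda)$; finiteness of $\Delta(\lambda)$ (so that it lies in $\dis(C^*)$) follows from the interval-finiteness of $(\Lambda,\leq)$ together with the filtration axiom (iv) applied to $P(\lambda)$.

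The verification that $(\Delta(\lambda))_\lambda$ is the required dual exceptional collection then splits into a Hom-computation and higher Ext-vanishing. For the Hom-computation, axioms (i)--(iv) together with the Dlab--Ringel standardization \cite{MR1211481} imply that $\nabla(\mu)$ has simple socle $L(\mu)$ with the remaining composition factors $L(\nu)$ satisfying $\nu < \mu$. For any nonzero $f \colon \Delta(\lambda) \to \nabla(\mu)$ the image contains $L(\lambda)$ (from the head of $\Delta(\lambda)$) and $L(\mu)$ (from the socle of $\nabla(\mu)$), forcing both $\lambda \leq \mu$ and $\mu \leq \lambda$, hence $\lambda = \mu$; one-dimensionality of $\Hom(\Delta(\lambda),\nabla(\lambda))$ then follows from $\End(\nabla(\lambda)) = \mathbb{K}$. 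For $\Ext^i(\Delta(\lambda), \nabla(\mu)) = 0$ with $i > 0$, I would build a projective resolution of $\Delta(\lambda)$ by iterating the construction on $\sum_{\mu \not\leq \lambda}\operatorname{tr}_{P(\mu)}(P(\lambda))$ and combine with the composition-factor constraint on $\nabla(\mu)$ and axiom (iii) to kill Ext inductively along the poset.

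The main obstacle is extracting the structural information about $\nabla(\mu)$ (simple socle $L(\mu)$ and composition-factor constraint) from the abstract axioms (i)--(iv); this is precisely the content of the Dlab--Ringel standardization result, and once it is available the remaining verifications are routine bookkeeping along the partial order $(\Lambda, \leq)$.
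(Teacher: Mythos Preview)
The paper does not actually prove this proposition: it is stated without proof, and the remark immediately preceding it (after Definition~\ref{def:qh}) signals that the whole setup ``relies on the standardization result of Dlab and Ringel \cite{MR1211481}'' with details deferred to \cite[Appendix~A]{MR3713007}. In other words, Proposition~\ref{prop:dualexc} is being quoted from the literature rather than established in the text, so there is no in-paper argument to compare against.

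Your sketch is essentially the standard argument underlying those references: define $\Delta(\lambda)$ as the maximal quotient of the projective cover $P(\lambda)$ with composition factors $L(\nu)$, $\nu\le\lambda$, and then verify the orthogonality relations $\Hom(\Delta(\lambda),\nabla(\mu)[i])=\delta_{\lambda\mu}\delta_{i0}\,\mathbb{K}$. The outline is sound. One point deserves more care than you give it: interval-finiteness of $(\Lambda,\le)$ does \emph{not} imply that $\{\nu:\nu\le\lambda\}$ is finite, so the finite-dimensionality of $\Delta(\lambda)$ does not follow from that alone. The usual route is to observe (dually) that the injective hull $I(\lambda)$ in $\Comod(C)$ inherits a $\nabla$-filtration from axiom~(iv), with multiplicities $[\nabla(\mu):L(\lambda)]$, and to read off finiteness from there together with the composition-factor constraints on the $\nabla$'s. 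This is a refinement rather than a correction; the overall strategy is correct and is exactly what the cited references carry out.
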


There is another definition of a
quasi-hereditary 
coalgebra, which is 
often easier to work with in
practice. We will first need the definition of a \textit{heredity chain}\index{heredity chain} which
we will phrase in the context of finite-dimensional algebras. So
assume $A$ is a finite-dimensional $\mathbb{K}$-algebra, with Jacobson
radical~$\rad(A)$\footnote{Recall that the Jacobson radical of $A$ is the intersection of all annihilators of simple right $A$-modules.}.

\begin{definition}
A two-sided ideal~$I$ of~$A$ is called a \textit{heredity ideal}\index{heredity ideal} if
\begin{enumerate}
\item $I$ is idempotent,
\item $I_{A}$ is projective,
\item $I \rad(A) I=0$.
\end{enumerate} 
\end{definition}

\begin{definition}
The algebra $A$ is a quasi-hereditary algebra if it has a filtration by heredity ideals, i.e., there is a chain
\begin{equation}
0=J_0 \subset J_1 \subset \cdots \subset J_{m-1} \subset J_m=A
\end{equation}
of ideals of~$A$ such that for any~$1 \leq t \leq m$,~$J_t/J_{t-1}$ is a heredity ideal in $A/J_{t-1}$. Such a chain is called a heredity chain.
\end{definition}

\begin{definition}
\label{def:qh-ring}
A (possibly infinite-dimensional) coalgebra $C$ is \textit{quasi-hereditary}\index{coalgebra! quasi-hereditary} if there exists an exhaustive filtration
\begin{equation}		
0 \subset C_1 \subset C_2 \subset \cdots \subset C_n \subset \cdots
\end{equation}
of finite-dimensional subcoalgebras such that for every $i$, we have
\begin{equation}
0=(C_i/C_i)^* \subset (C_i/C_{i-1})^* \subset (C_i/C_{i-2})^* \subset \cdots \subset C_i^*
\end{equation}
is a heredity chain. Such a filtration is called a \textit{heredity cochain}\index{heredity cochain}.
\end{definition}

\section{Representations of $\uaut(A)$}
\label{sec:repsaut}
Consider a Koszul, Artin-Schelter regular algebra $A=TV/(R)$ of global dimension $d$, and the corresponding universal Hopf algebra $\uaut(A)$. Taking our cue from Theorem \ref{th:derivedeq} and the preceding constructions, consider the Koszul resolution of $A$: 
\begin{equation}
0 \to A \otimes R_d \to \cdots \to A \otimes R_l \to \cdots \to A \otimes R \to A \otimes V \to A \to \mathbb{K} \to 0,
\end{equation}
with\footnote{We usually omit tensor product signs.} $
R_l:=\bigcap_{i+j+2=l}V^i R V^j $. In particular, we have $R_2=R$ and for uniformity we also put $R_1=V$. It follows from the basic properties of AS-regular algebras that $\dim R_d=1$ and that, moreover, the obvious inclusions $R_d\hookrightarrow R_a R_{d-a}$ define non-degenerate pairings between $R_a$ and $R_{d-a}$. These properties characterise the AS-regular algebras among the Koszul ones, as we saw in Lemma \ref{lem:koszulfrobas}.

Since we would like to think of $\uaut(A)$ as a noncommutative version of the coordinate ring of $\GL(V)$, we will denote $\comod(\uaut(A))$ by $\Rep_{\mathbb{K}}(\uaut(A))$\index{$\Rep_{\mathbb{K}}(\uaut(A))$}. It is easy to see that the $(R_l)_l$ are $\uaut(A)$-comodules, with $R_d$ being invertible. The discussion after Theorem \ref{th:derivedeq} suggests to consider 
\begin{equation}
\Vscr=\langle R_l \mid l=1, \ldots, d \rangle_{\otimes} \subset \Rep_{\mathbb{K}}(\uaut(A)).
\end{equation}
Moreover, denoting $F:\Vscr \to \Vect_{\mathbb{K}}$ the restriction of the forgetful functor, one might \emph{expect} in analogy with the commutative setting that 
\begin{equation}
\coend(F) \cong \uaut(A),
\end{equation}
and that $F$ induces an equivalence
\begin{equation}
\label{eq:perf2}
F:\perf(\Vscr^{\opp}) \to \derived^b(\Rep_{\mathbb{K}}(\uaut(A))):\Vscr(-,v) \mapsto F(v)
\end{equation}
of monoidal triangulated categories. However it seems difficult to verify this directly since the structure of $\Rep_{\mathbb{K}}(\uaut(A)))$ is completely unknown at this stage.
Therefore we proceed differently.

We relate $\Vscr$ to a certain monoidal category $\Uscr$ with strong combinatorial features. In fact, we have already introduced a suitable monoidal category
$\Dscr$ in Proposition \ref{prop:pres-aut} but the latter was optimized for finding a compact presentation of $\uaut(A)$. In contrast, $\Uscr$ more faithfully reflects the representation-theoretic features of $\uaut(A)$.

To summarize: we will not use $(\Dscr,G)$ like in Proposition \ref{prop:pres-aut} but use a different pair $(\Uscr,M)$.  
Nonetheless, we have
\begin{equation}
\coend_{\Uscr}(M)\cong\coend_{\Dscr}(G)\cong\uaut(A).
\end{equation} 
This illustrates the fact that different pairs $(\Cscr,F)$ with the same $\coend$ can be used to study different aspects of the same Hopf algebra.
\subsection{The category $\Uscr$} 
\label{sec:comb-structure}
It is not hard to see that there are morphisms of $\uaut(A)$-representations
\begin{align}
\Phi_{a,b}&:R_{a+b} \to R_a R_b, \\
\Theta_{a,b}&:R_a R_d^{-1} R_b \to R_{a+b-d},
\end{align}
satisfying certain natural relations. To formalize this, define the monoid
\begin{equation}
\Lambda=\langle r_1,\ldots, r_{d-1},r_d^{\pm 1} \rangle
\end{equation} 
and consider the following monoidal categories with set of objects $\Lambda$:
\begin{equation}
\Uscr_{\u}=\langle r_1, \ldots, r_{d-1}, r_d^{\pm 1} \mid \phi_{a,b}:r_{a+b} \to r_ar_b \rangle_{\otimes},
\end{equation}
and impose the following set of relations:
\begin{equation}
\label{rel:1}
\begin{tikzcd}[column sep=large]
r_{a+b+c}\ar{d}{\phi_{a+b,c}}\ar{r}{\phi_{a,b+c}} & r_ar_{b+c}\ar{d}{r_a\phi_{b,c}}\\
r_{a+b}r_{c}\ar{r}{\phi_{a,b}r_c} & r_ar_{b}r_c
\end{tikzcd}
\end{equation}
writing $u$ for $\Id_u$ and suppressing tensor products as usual. Similarly, consider 
\begin{equation}
\Uscr_{\d}=\langle r_1, \ldots, r_{d-1},r_d^{\pm 1} \mid \theta_{a,b}:r_ar_d^{-1}r_b \to r_{a+b-d} \rangle_{\otimes}
\end{equation}
and impose the relations:
\begin{equation}
\label{rel:2}
\begin{tikzcd}[column sep=large]
r_ar^{-1}_dr_br_d^{-1}r_c\ar{r}{\theta_{a,b}r_d^{-1}r_c} \ar{d}{r_ar_d^{-1} \theta_{b,c}}& r_{a+b-d}r_d^{-1}r_c\ar{d}{\theta_{a+b-d,c}}\\
r_{a} r_d^{-1} r_{b+c-d}\ar{r}{\theta_{a,b+c-d}}& r_{a+b+c-2d}
\end{tikzcd}
\end{equation}
Relations \eqref{rel:1} and \eqref{rel:2} are chosen because they are satisfied by the morphisms $\Phi_{a,b}$ and $\Theta_{a,b}$.

Now let $\tilde{\Uscr}=\Uscr_{\d}\ast \Uscr_{\u}$ be the category with set of objects $\Lambda$ and the morphisms 
freely generated by the morphisms in $\Uscr_{\d}$ and $\Uscr_{\u}$. Then $\tilde{\Uscr}$ is strict monoidal
in the obvious way.
Let $\Uscr$ be the
monoidal quotient of $\tilde{\Uscr}$ obtained by imposing the following sets of relations
	
	\begin{enumerate}
	\item
	\begin{equation}
	\label{rel:3}
	\begin{tikzcd}[column sep=large]
	r_{a+b}r_d^{-1}r_c\ar{r}{\phi_{a,b}r_d^{-1}r_c}\ar{d}{\theta_{a+b,c}}&r_{a}r_b r_d^{-1} r_c\ar{d}{r_a\theta_{b,c}}\\
	r_{a+b+c-d}\ar{r}{\phi_{a,b+c-d}}&r_{a} r_{b+c-d}
	\end{tikzcd}
	\end{equation}
	where $d\le b+c$ and where moreover we allow the degenerate cases $a+b=d$ in which case
	we put $\theta_{d,c}=\Id_{r_c}$ and $b+c=d$ in which case we put $\phi_{a,0}=\Id_{r_a}$.
	\item
	\begin{equation}
	\label{rel:4}
	\begin{tikzcd}[column sep=large]
	r_ar_{d}^{-1} r_{b+c}\ar{r}{r_ar_d^{-1}\phi_{b,c}}\ar{d}{\theta_{a,b+c}}& r_a r_d^{-1} r_br_c\ar{d}{\theta_{a,b}r_c}\\
	r_{a+b+c-d}\ar{r}{\phi_{a+b-d,c}}& r_{a+b-d}r_c
	\end{tikzcd}
	\end{equation}
	where $d\le a+b$ and where, moreover, we allow the degenerate cases $b+c=d$ when we put
	$\theta_{a,d}=\Id_{r_a}$, and $a+b=d$ when we put $\phi_{0,c}=\Id_{r_c}$.
	\end{enumerate}

Again, it is easy to see that the $\Phi_{a,b}$ and $\Theta_{a,b}$ satisfy relations \eqref{rel:3} and \eqref{rel:4}.

By linearising the morphism spaces in these monoidal categories, we obtain linear categories $\mathbb{K}\Uscr_\u, \mathbb{K}\Uscr_\d$ and $\mathbb{K}\Uscr$. It is possible to grade the morphisms of these categories, so one can consider them as multiple object versions of graded algebras. The combinatorial structure of $\Uscr$ is elucidated in the following proposition.

\begin{proposition}\cite[Propositions 3.1.2, 3.3.1]{MR3570144}
\label{prop:comb}
\begin{enumerate}
\item The graded categories $\mathbb{K}\Uscr_\u$ and $\mathbb{K}\Uscr_{\d}$ are Koszul,
\item $\Uscr$ can be given the structure of a \textit{Reedy category}\index{Reedy category}, i.e., every morphism $f$ in $\Uscr$ can be written uniquely as a composition $f_{\u}\circ f_{\d}$ with $f_{\d}$ in $\Uscr_{\d}$ and $f_{\u}$ in $\Uscr_{\u}$.
\end{enumerate}
\end{proposition}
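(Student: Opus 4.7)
I would handle (2) first, since its proof clarifies the combinatorial structure used in (1). The relations \eqref{rel:3} and \eqref{rel:4} express a Beck-style distributive law between the two subcategories $\Uscr_\d$ and $\Uscr_\u$: each relation rewrites a composite of the form ($\phi$ followed by $\theta$) into one of the form ($\theta$ followed by $\phi$). Given any word representing $f \in \Uscr(u,v)$ in the free category $\tilde{\Uscr}$, I would iteratively apply \eqref{rel:3}--\eqref{rel:4} to push every $\theta$-generator past any $\phi$-generator occurring earlier in the composition. A monovariant counting $(\phi,\theta)$-inversions (pairs where a $\phi$ is applied before a $\theta$) is strictly decreased at each step and is bounded below, so the procedure terminates at a normal form $f = f_\u \circ f_\d$ with $f_\u \in \Uscr_\u$ and $f_\d \in \Uscr_\d$. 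This yields existence of the factorization.

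For uniqueness, I would first argue that the Hom spaces of $\Uscr_\u$ and $\Uscr_\d$ are at most one-dimensional on each component: the defining relations \eqref{rel:1} and \eqref{rel:2} are associativity pentagons for $\phi$ and $\theta$ respectively, so a Mac Lane-type coherence argument ensures that any two iterated compositions with the same source and target represent the same morphism. Uniqueness in (2) then follows because the intermediate object in the factorization $f_\u \circ f_\d$ is forced by the combinatorial type of the source, the target, and the $r_d^{\pm1}$-count of $f$, and once the intermediate object is fixed, both $f_\u$ and $f_\d$ are pinned down by thinness.

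For (1), the coherence argument above simultaneously furnishes a candidate PBW basis for $\mathbb{K}\Uscr_\u$ (and $\mathbb{K}\Uscr_\d$): for each refinement-comparable pair of objects, choose the unique ``left-combed'' iterated composition of $\phi$-generators as the normal form. I would then verify via Bergman's diamond lemma that the quadratic relations \eqref{rel:1} form a confluent rewriting system when oriented so that right-combed associators rewrite to left-combed ones. Since every triple composition of $\phi$-generators reduces to the left-combed normal form via repeated application of the pentagon, confluence is automatic. Priddy's PBW criterion then delivers Koszulity of $\mathbb{K}\Uscr_\u$, and an entirely symmetric argument handles $\mathbb{K}\Uscr_\d$.

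The main obstacle I anticipate lies in the degenerate cases flagged in \eqref{rel:3}--\eqref{rel:4} (when $a+b = d$ or $b+c = d$, so that one of $\theta_{d,c}, \theta_{a,d}, \phi_{a,0}, \phi_{0,c}$ collapses to an identity). These degeneracies must be handled consistently throughout the termination analysis for (2)---in particular, one must verify that applying a relation in a degenerate form genuinely decreases the $(\phi,\theta)$-inversion count and does not create a loop---and they may also introduce extra overlaps in the diamond argument for (1) that require separate inspection. A secondary subtlety is the bookkeeping around the invertible generator $r_d^{\pm 1}$, which commutes trivially through $\Uscr_\u$ and $\Uscr_\d$ but plays an essential role in matching source and target indices in the distributive relations.
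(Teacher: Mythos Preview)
The survey itself gives no proof here; the proposition is stated with a citation to \cite[Propositions 3.1.2, 3.3.1]{MR3570144}, so there is no in-paper argument against which to compare your outline, only the cited source.

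Your existence argument for (2) is sound: the relations \eqref{rel:3}--\eqref{rel:4}, read left to right, strictly decrease the number of $(\phi,\theta)$-inversions, so rewriting terminates in a factorization $f_\u\circ f_\d$. Your thinness claim for $\Uscr_\u$ and $\Uscr_\d$, extracted from the associativity relations \eqref{rel:1}--\eqref{rel:2} via a Mac~Lane-type coherence argument, is also correct.

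The gap is in uniqueness. Thinness of the two halves only tells you that $f_\d$ and $f_\u$ are determined \emph{once the intermediate object $w$ is fixed}; it does not by itself determine $w$. The invariants you invoke (source, target, $r_d^{\pm 1}$-count) pin down only the length of $w$ and its number of $r_d^{-1}$ letters, not $w$ as a word in $\Lambda$. What is actually required is \emph{confluence} of the rewriting system: every critical overlap of \eqref{rel:3}--\eqref{rel:4}, including the degenerate instances you rightly flag, must be shown to resolve to a common reduct. With termination already in hand, Newman's lemma then yields unique normal forms and hence a unique intermediate $w$. This confluence verification is where the substantive work lies and cannot be replaced by the invariants you list.

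For (1), the PBW-via-diamond-lemma strategy is a legitimate route. Note, though, that once thinness of $\Uscr_\u$ is established, $\mathbb{K}\Uscr_\u$ is essentially the incidence category of a poset of refinements, and Koszulity can alternatively be obtained by exhibiting explicit linear resolutions of the simples; either approach is standard.
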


\subsection{$\uaut(A)$ is quasi-hereditary}
\label{sec:autA-qh}
Since relations \eqref{rel:1}, \eqref{rel:2}, \eqref{rel:3} and \eqref{rel:4} were chosen based on the relations satisfied by the $\Phi_{a,b}$ and $\Theta_{a,b}$, it follows that by construction there is a monoidal functor $G: \Uscr \to \Vscr$, which can be composed with the forgetful functor $F$ to obtain a monoidal functor
\begin{equation}
\label{eq:monfunc}
M: \Uscr \to \Vect_{\mathbb{K}}.
\end{equation}
Note that up to some morphisms and relations, this setup is very similar to Proposition \ref{prop:pres-aut}, so the following theorem should not come as a surprise.

\begin{theorem}\cite[Theorem 5.1]{MR3570144}
The monoidal category $\Uscr$ is rigid, and there is an isomorphism of Hopf algebras 
\begin{equation}
\coend_{\Uscr}(M) \cong \uaut(A).
\end{equation}
\end{theorem}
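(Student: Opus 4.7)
The plan is to establish rigidity of $\Uscr$ directly from its presentation, produce a Hopf algebra map $\Psi:\coend_\Uscr(M)\to\uaut(A)$ via the universal property of $\coend$, and then show $\Psi$ is an isomorphism by comparison with the presentation afforded by Proposition \ref{prop:pres-aut}.

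\textbf{Rigidity.} Since $r_d^{-1}$ is formally adjoined and duals are closed under tensor product, it suffices to exhibit a right dual for each $r_a$ with $1\le a\le d-1$. I claim $r_a^{\vee}=r_{d-a}\otimes r_d^{-1}$, with evaluation $\theta_{d-a,a}:r_{d-a}r_d^{-1}r_a\to r_0=1$ and coevaluation $(\phi_{a,d-a}\otimes r_d^{-1})\circ \eta$, where $\eta:1\to r_d r_d^{-1}$ is the unit coming from formal invertibility of $r_d$. The snake identities then reduce to the degenerate cases of \eqref{rel:3} and \eqref{rel:4}, in which $\theta_{d,\cdot}$ and $\phi_{\cdot,0}$ become identities by the stated conventions. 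Left duals are constructed symmetrically.

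\textbf{Construction of $\Psi$.} The $\uaut(A)$-equivariant morphisms $\Phi_{a,b}:R_{a+b}\to R_a R_b$ and $\Theta_{a,b}:R_a R_d^{-1} R_b\to R_{a+b-d}$ satisfy the relations \eqref{rel:1}--\eqref{rel:4} by the very choice of the latter. Hence there is a monoidal functor $G:\Uscr\to\Vscr\subset\Rep_{\mathbb{K}}(\uaut(A))$, and $M=F\circ G$ factors monoidally through $\comod(\uaut(A))$. Combining the universal property of $\coend$ with Proposition \ref{prop:extra} (applied using rigidity from the previous step) produces the desired Hopf algebra map $\Psi:\coend_\Uscr(M)\to\uaut(A)$.

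\textbf{$\Psi$ is an isomorphism.} The key observation is that there is a monoidal functor $\iota:\Dscr\to\Uscr$, identity on objects, sending each generating morphism $r_i\to r_1^{\otimes i}$ of $\Dscr$ to an iterated composition of $\phi$'s and each $r_a r_d^{-1} r_{d-a}\to 1$ to $\theta_{a,d-a}$. One checks directly that $M\circ\iota=G$, so functoriality of $\coend$ yields a coalgebra map $\alpha:\coend_\Dscr(G)\to\coend_\Uscr(M)$; this map is surjective because $\iota$ is the identity on objects and $\coend_\Uscr(M)$ is spanned by matrix coefficients of objects. By Proposition \ref{prop:pres-aut}, $\coend_\Dscr(G)\cong\uaut(A)$ is generated as an algebra by the matrix coefficients of $r_1$ and the inverse of the group-like attached to $r_d$, and both $\alpha$ and $\Psi$ act as the identity on these generators. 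Hence $\Psi\circ\alpha:\uaut(A)\to\uaut(A)$ is the identity, forcing both $\alpha$ and $\Psi$ to be isomorphisms. The main obstacle is the bookkeeping required to verify $M\circ\iota=G$ and the snake identities across the various degenerate edge cases built into the relations \eqref{rel:1}--\eqref{rel:4}; the combinatorial structure of $\Uscr$ (notably the Reedy decomposition of Proposition \ref{prop:comb}) should streamline these checks.
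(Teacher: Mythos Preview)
The paper does not actually give a proof of this theorem; it merely cites the original reference and remarks that the setup is ``very similar to Proposition~\ref{prop:pres-aut}, so the following theorem should not come as a surprise.'' Your proposal is precisely a fleshed-out version of that hint: you compare $(\Uscr,M)$ to $(\Dscr,G)$ via a functor $\iota:\Dscr\to\Uscr$ that is the identity on objects, use the explicit coend formula to see that $\alpha:\coend_\Dscr(G)\to\coend_\Uscr(M)$ is surjective (more morphisms means more relations in the coend), and then invoke Proposition~\ref{prop:pres-aut} together with $\Psi\circ\alpha=\id$ to conclude. The rigidity argument is also correct: the degenerate cases built into relations~\eqref{rel:3} and~\eqref{rel:4} (namely $a+b=d$ and $b+c=d$ simultaneously) collapse exactly to the snake identities for $r_a^\vee=r_{d-a}r_d^{-1}$.

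Two minor remarks. First, there is a harmless notational collision: you use $G$ both for the functor $\Uscr\to\Vscr$ of \S\ref{sec:autA-qh} and for the functor $\Dscr\to\Vect_{\mathbb{K}}$ of Proposition~\ref{prop:pres-aut}; it would be cleaner to rename one of them. Second, the Reedy structure of Proposition~\ref{prop:comb} is not actually needed here---it is the key input for the later Theorem~\ref{th:quasiher}, but the present result only requires the elementary comparison with $\Dscr$ that you have already carried out. Otherwise the argument is sound and matches the paper's intended route.
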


At this point we forget about the intermediate category $\Vscr$, which is a priori hard to control since it is a linear category, and focus only on $\Uscr$, which is not linear, and on the functor $M$. This turns out to greatly simplify calculations.

Equipping $\Lambda$ with the left- and right-invariant partial ordering generated by 
\begin{align}
r_{a+b} &<  r_ar_b, \\
r_{d-a-b} &< r_{d-a}r_d^{-1}r_{d-b}, 
\end{align}
we can now state the main theorem from \cite{MR3570144} more precisely.

\begin{theorem}
\label{th:quasiher}
The coalgebra $\uaut(A)$ is quasi-hereditary with respect to the poset $(\Lambda,\leq)$. The costandard and standard representations are given as 
\begin{equation}
\label{ref-1.3-8} 
\begin{aligned}
	\nabla(\lambda) &= \coker \bigg( \bigoplus_{\substack{\mu \to \lambda \text{ in }\Uscr\\ \mu < \lambda}} M(\mu) \to M(\lambda) \bigg)\\
	\Delta(\lambda) &= \ker \bigg( M(\lambda) \to \bigoplus_{\substack{\lambda \to \mu \text{ in } \Uscr \\ \mu < \lambda}} M(\mu) \bigg).
\end{aligned}
\end{equation}
\end{theorem}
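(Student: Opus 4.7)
The plan is to work combinatorially on $\Uscr$, using both the Reedy structure from Proposition \ref{prop:comb} and the Koszulness of its two halves $\mathbb{K}\Uscr_\u$ and $\mathbb{K}\Uscr_\d$. Via Theorem \ref{th:tkreconstruction} I would transfer the problem to the category $\dist(\End(M))$ of discrete $\End(M)$-modules, and regard each $M(\lambda)$ for $\lambda\in\Lambda$ as a distinguished object in $\Rep_{\mathbb{K}}(\uaut(A))$. The Reedy factorisation provides the first simplification: any arrow $\mu\to\lambda$ in $\Uscr$ with $\mu<\lambda$ decomposes uniquely as an up-arrow followed by a down-arrow, and since $\leq$ on $\Lambda$ is generated by the two relations $r_{a+b}<r_a r_b$ and $r_{d-a-b}<r_{d-a}r_d^{-1}r_{d-b}$, the cokernel defining $\nabla(\lambda)$ in \eqref{ref-1.3-8} is built only from up-morphisms, while the kernel defining $\Delta(\lambda)$ is built only from down-morphisms. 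This gives a clean local presentation of $\nabla$ and $\Delta$ purely inside $\mathbb{K}\Uscr_\u$ and $\mathbb{K}\Uscr_\d$ respectively.

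I would then verify the four axioms of Definition \ref{def:qh} one by one. Axioms (1)--(2) follow from a Yoneda-style argument: any map $\nabla(\lambda)\to\nabla(\mu)$ lifts to a map $M(\lambda)\to\nabla(\mu)$, which by the construction of $\coend_{\Uscr}(M)$ is controlled by the morphism set $\Uscr(\lambda,\mu)$; the Reedy decomposition of such morphisms together with the definition of $\leq$ forces $\lambda\geq\mu$ for any nonzero map, and leaves only scalar multiples of the identity when $\lambda=\mu$.

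The main obstacle is axiom (3), the $\Ext^1$-vanishing $\Ext^1(\nabla(\lambda),\nabla(\mu))=0$ unless $\lambda>\mu$. For this I would construct a two-step projective presentation of the form
\begin{equation}
\bigoplus_{\mu'>\lambda}M(\mu')\longrightarrow M(\lambda)\longrightarrow \nabla(\lambda)\longrightarrow 0
\end{equation}
using the up-morphisms from Proposition \ref{prop:comb}, and then exploit the Koszulness of $\mathbb{K}\Uscr_\u$ to guarantee that the first syzygy is again controlled by objects strictly above $\lambda$ in the poset. A dimension-shifting argument then reduces the Ext computation to a Hom computation in a full subcategory of $\Uscr_\u$, which vanishes by the poset condition established for axiom (2). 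Koszulness is used here in an essential way: without a linear resolution, higher syzygies could spill over to incomparable weights and produce unwanted Ext classes between costandards.

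Finally, for axiom (4) I would use \eqref{eq:coend} to write $\uaut(A)\cong\bigoplus_{\lambda\in\Lambda}M(\lambda)^*\otimes M(\lambda)/E$ and filter the direct sum by finite lower sets of $\Lambda$ (finite by interval finiteness of the poset). The associated subquotients unfold, after a short calculation using the up/down decomposition of $E$, as $M(\lambda)^*\otimes\nabla(\lambda)$ regarded as a right $\uaut(A)$-comodule via the second tensorand, hence as a direct sum of copies of $\nabla(\lambda)$. The parallel identification of $\Delta(\lambda)$ via the kernel formula in \eqref{ref-1.3-8} with the exceptional collection dual to $(\nabla(\lambda))_{\lambda}$ of Proposition \ref{prop:dualexc} follows by the symmetric argument with $\mathbb{K}\Uscr_\d$ replacing $\mathbb{K}\Uscr_\u$, together with a standard duality calculation between the up- and down-resolutions.
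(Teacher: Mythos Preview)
Your proposal differs from the paper's route and contains a circularity that would need to be broken before the argument could go through.

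The paper does \emph{not} verify the categorical axioms of Definition~\ref{def:qh} directly. Instead it checks the ring-theoretic Definition~\ref{def:qh-ring} by constructing a heredity cochain. The key technical input is Theorem~\ref{th:heredity}: for saturated $\Lambda_1\subset\Lambda_2$ with $\Lambda_2\setminus\Lambda_1$ an antichain, there is an exact sequence
\[
0 \to \prod_{\lambda\in\Lambda_2\setminus\Lambda_1}\Hom_{\mathbb{K}}(\nabla(\lambda),\Delta(\lambda)) \to \End_{\Uscr_2}(M) \to \End_{\Uscr_1}(M) \to 0.
\]
The terms $\End_{\Uscr_i}(M)$ are spaces of \emph{natural transformations} of the functor $M$ restricted to $\Uscr_i$; these are purely combinatorial objects, computable from the Reedy and Koszul structure of $\Uscr$ without any prior knowledge of $\Rep_{\mathbb{K}}(\uaut(A))$. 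Dualising the sequence yields the heredity cochain in $\coend_{\Uscr}(M)\cong\uaut(A)$, and the kernel, being of the form $\Delta(\lambda)\otimes\nabla(\lambda)^*$, is exactly what makes the successive quotients heredity ideals.

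Your axioms (1)--(3) hinge on the claim that a map $M(\lambda)\to\nabla(\mu)$ in $\Rep_{\mathbb{K}}(\uaut(A))$ ``is controlled by the morphism set $\Uscr(\lambda,\mu)$''. That statement is precisely the full faithfulness of $M:\mathbb{K}\Uscr\to\Rep_{\mathbb{K}}(\uaut(A))$, i.e.\ Theorem~\ref{th:mainth}, which in the paper is \emph{deduced from} the quasi-hereditary structure, not used to prove it. A priori $\Hom_{\uaut(A)}(M(\lambda),M(\mu))$ could strictly contain the image of $\mathbb{K}\Uscr(\lambda,\mu)$, and then your Yoneda and dimension-shifting arguments collapse. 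The same circularity infects your ``projective presentation'' for axiom~(3): the $M(\lambda)$ are not projective in $\Rep_{\mathbb{K}}(\uaut(A))$ (they are tilting, by Corollary~\ref{cor:filtrations}), so to run a syzygy argument you would first need the derived equivalence \eqref{eq:derived}, which again is downstream of Theorem~\ref{th:quasiher}. The paper's approach sidesteps all of this by staying on the $\End_{\Uscr}(M)$ side, where the relevant computations are genuinely accessible from Proposition~\ref{prop:comb} alone.
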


The proof of this theorem uses the strong combinatorial structure on the category $\Uscr$ from Proposition \ref{prop:comb} in order to check Definition \ref{def:qh-ring}. Assuming that $\Lambda_1\subset\Lambda_2$ are saturated subsets\footnote{A subset $\pi \subset \Lambda$ is called \textit{saturated}\index{poset! saturated} if $\mu \leq \lambda \in \pi$ implies $\mu \in \pi$.} of $\Lambda$ such that the elements of $\Lambda_2-\Lambda_1$ are incomparable. Let $\Uscr_i\subset \Uscr$ be the full subcategories of $\Uscr$ with object sets $\Lambda_i$. 
The key technical result that enters in the proof of Theorem \ref{th:quasiher} is the following.

\begin{theorem}
\label{th:heredity}
There is an exact sequence
\begin{equation}
\label{ref-1.4-11}
	0 \to \prod_{\lambda\in\Lambda_2-\Lambda_1}\Hom_{\mathbb{K}}(\nabla(\lambda),\Delta(\lambda)) \to \End_{\Uscr_2}(M) \to \End_{\Uscr_1}(M) \to 0,
\end{equation}
 where $\nabla(\lambda)$ and $\Delta(\lambda)$ are as in \eqref{ref-1.3-8} above.
\end{theorem}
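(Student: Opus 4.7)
The plan is to prove exactness in two stages: first identify the kernel of the restriction map $\End_{\Uscr_2}(M)\to \End_{\Uscr_1}(M)$ with the displayed product of Hom-spaces, and then show that the restriction is surjective. Both stages exploit the Reedy and Koszul structure of $\Uscr$ recorded in Proposition \ref{prop:comb}, together with the saturation of $\Lambda_1$ and the pairwise incomparability of $\Lambda_2\setminus\Lambda_1$.

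For the kernel, take $\alpha\in\End_{\Uscr_2}(M)$ with $\alpha|_{\Uscr_1}=0$ and fix $\lambda\in\Lambda_2\setminus\Lambda_1$. The hypotheses on $\Lambda_1,\Lambda_2$ force every $\mu\in\Lambda_2$ with $\mu<\lambda$ to lie in $\Lambda_1$. Naturality of $\alpha$ on every morphism $\mu\to\lambda$ with $\mu<\lambda$ then gives $\alpha_\lambda\circ M(\mu\to\lambda)=M(\mu\to\lambda)\circ\alpha_\mu=0$, so $\alpha_\lambda$ factors through $\nabla(\lambda)$; dually, naturality on every morphism $\lambda\to\mu$ with $\mu<\lambda$ forces $\im(\alpha_\lambda)\subset\Delta(\lambda)$. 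For morphisms $\tau$ between distinct objects of $\Lambda_2\setminus\Lambda_1$, the Reedy factorization $\tau=\tau_\u\circ\tau_\d$ runs through some intermediate $\xi$ strictly below both endpoints, and then $\im(\alpha_\lambda)\subset\Delta(\lambda)\subset\ker M(\tau_\d)$ while $\alpha_{\lambda'}$ vanishes on $I_{\lambda'}\supset\im M(\tau_\u)$, so both sides of the naturality square vanish. Conversely, any collection $(\beta_\lambda)\in\prod_\lambda\Hom_{\mathbb{K}}(\nabla(\lambda),\Delta(\lambda))$ defines a natural transformation via the composite $\alpha_\lambda=(\Delta(\lambda)\hookrightarrow M(\lambda))\circ\beta_\lambda\circ(M(\lambda)\twoheadrightarrow\nabla(\lambda))$, the same Reedy argument showing naturality.

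For surjectivity, given $\alpha\in\End_{\Uscr_1}(M)$ and $\lambda\in\Lambda_2\setminus\Lambda_1$, one prescribes $\tilde\alpha_\lambda$ as follows. Naturality with respect to up-morphisms $\phi:\mu\to\lambda$ from $\Lambda_1$ forces the rule $\tilde\alpha_\lambda(M(\phi)(m))=M(\phi)(\alpha_\mu(m))$, which determines $\tilde\alpha_\lambda$ on the submodule $I_\lambda$; naturality with respect to down-morphisms $\psi:\lambda\to\mu$ into $\Lambda_1$ dually determines $\tilde\alpha_\lambda$ modulo $\Delta(\lambda)$. The two prescriptions are compatible because for any composite $\mu\xrightarrow{\phi}\lambda\xrightarrow{\psi}\mu'$ the morphism $\psi\phi$ lies in $\Uscr_1$, where $\alpha$ is already natural. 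Finally, choosing any linear splitting $M(\lambda)=I_\lambda\oplus C$ and any lift of the prescribed map $M(\lambda)/I_\lambda\to M(\lambda)/\Delta(\lambda)$ to $C\to M(\lambda)$ assembles into the required $\tilde\alpha_\lambda$.

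The main obstacle is the well-definedness of the formula for $\tilde\alpha_\lambda$ on $I_\lambda$: one must verify that if $\sum_i M(\phi_i)(m_i)=0$ in $M(\lambda)$ with each $\phi_i:\mu_i\to\lambda$ an up-morphism from $\mu_i\in\Lambda_1$, then $\sum_i M(\phi_i)(\alpha_{\mu_i}(m_i))=0$ as well. This amounts to showing that all linear relations among the generators of $I_\lambda$ are generated by $\Uscr_1$-naturality relations, i.e.\ by factorizations $\phi_i=\phi_i'\circ\psi_i$ through $\Uscr_1$-morphisms $\psi_i$. This is where the Koszul property of $\mathbb{K}\Uscr_\u$ from Proposition \ref{prop:comb} enters decisively: it furnishes a tight presentation of the kernel of $\bigoplus_\phi M(\mu_\phi)\twoheadrightarrow I_\lambda$ in terms of composition relations — exactly the relations respected by any natural transformation on $\Uscr_1$.
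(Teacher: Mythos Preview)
The paper is a survey and does not prove Theorem~\ref{th:heredity}; it merely records it as the key technical input imported from \cite{MR3570144}. So there is no in-paper proof to compare against.

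Your outline is the right strategy and matches the shape of the original argument: use the Reedy factorization to reduce all naturality constraints at $\lambda\in\Lambda_2\setminus\Lambda_1$ to those involving up-morphisms from $\Lambda_1$ and down-morphisms into $\Lambda_1$, read off the kernel as the displayed product, and lift along the restriction by prescribing $\tilde\alpha_\lambda$ on $I_\lambda$ and modulo $\Delta(\lambda)$ separately. Your compatibility check via $\psi\phi\in\Uscr_1$ is correct, and the Reedy factorization does handle naturality for morphisms between distinct objects of $\Lambda_2\setminus\Lambda_1$ as you say.

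The genuine gap is precisely where you flag it, but your explanation of how Koszulity closes it is too vague to count as a proof. What is needed is not a property of $\mathbb{K}\Uscr_\u$ alone but a statement about the functor $M$: that $I_\lambda$ is the honest colimit of $M$ over the punctured up-slice $(\Uscr_\u/\lambda)\setminus\{\lambda\}$, so that any natural transformation on that slice descends. This uses both the Koszulity of $\mathbb{K}\Uscr_\u$ (to control the syzygies of the presentation $\bigoplus_\phi M(\mu_\phi)\twoheadrightarrow I_\lambda$) \emph{and} specific features of $M$ coming from AS-regularity of $A$---in particular that $M$ sends generating up-morphisms to injections and generating down-morphisms to surjections. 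You have not invoked these. The dual well-definedness, which you dismiss with the single word ``dually'', is equally substantive: one needs that the tuple $(\alpha_{\mu'}\circ M(\psi))_\psi$ actually lands in the image of $M(\lambda)/\Delta(\lambda)\hookrightarrow\prod_\psi M(\mu'_\psi)$, and this requires the Koszulity of $\mathbb{K}\Uscr_\d$ together with the surjectivity of $M$ on down-morphisms. Without these two ingredients the lifting step is only a heuristic.
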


Starting with \eqref{ref-1.4-11} we may construct
a heredity cochain in $\coend_{\Uscr}(M)$ which yields that $\coend_{\Uscr}(M)\cong \uaut(A)$ is quasi-hereditary. The following theorem provides an analogue of Theorem \ref{th:mat}.

\begin{corollary}
\label{cor:filtrations}
Denote by $\Fscr(\Delta)$ (respectively $\Fscr(\nabla)$) the categories of $\uaut(A)$-comodules that have a $\Delta$-filtration (respectively $\nabla$-filtration). Then:
\begin{enumerate}
\item $\Fscr(\Delta)$ and $\Fscr(\nabla)$ are closed under tensor products. 
\item \label{cor:Mlambda}$M(\lambda)\in \Fscr(\Delta)\cap \Fscr(\nabla)$.
\end{enumerate}
\end{corollary}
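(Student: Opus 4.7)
The plan is to prove (2) first by poset induction on $(\Lambda,\le)$, and then derive (1) from (2) using the monoidality of $M$ together with the standard closure properties (under extensions, and under certain kernels/cokernels via the $\Ext$-characterization) enjoyed by $\Fscr(\nabla)$ and $\Fscr(\Delta)$ in any highest weight category. The poset $\Lambda$ is interval finite, so an induction along it makes sense.

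For (2), the base case covers the elements $\lambda$ minimal in the support of $M$: then the index sets in \eqref{ref-1.3-8} are empty and $\nabla(\lambda)=M(\lambda)=\Delta(\lambda)$, so the one-step filtration works trivially. For the inductive step, assume $M(\mu)\in\Fscr(\nabla)\cap\Fscr(\Delta)$ for every $\mu<\lambda$. From \eqref{ref-1.3-8} we obtain a short exact sequence
\[
0 \to K \to M(\lambda) \to \nabla(\lambda) \to 0,
\]
where $K$ is the image of $\bigoplus_{\mu<\lambda,\ \mu\to\lambda} M(\mu)\to M(\lambda)$. Since $\Fscr(\nabla)$ is closed under extensions, it is enough to show $K\in\Fscr(\nabla)$; the $\Fscr(\Delta)$ statement will follow by the symmetric argument based on the kernel description of $\Delta(\lambda)$.

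The step $K\in\Fscr(\nabla)$ is the main obstacle, because $K$ is only known to be a \emph{quotient} of a $\nabla$-filtered object and such quotients need not be $\nabla$-filtered in general. To overcome this I would exploit the truncation machinery already set up for Theorem~\ref{th:heredity}: taking $\Lambda_1=\{\mu:\mu<\lambda\}$ and $\Lambda_2=\Lambda_1\cup\{\lambda\}$, the exact sequence \eqref{ref-1.4-11} says that in the subcoalgebra $C_{\le\lambda}$ governing $\Uscr_{\le\lambda}$-modules, $\nabla(\lambda)$ has injective envelope exactly $M(\lambda)$. Consequently the kernel $K$ is an $\Fscr(\nabla)$-filtered submodule of the truncated comodule $M(\lambda)$, with filtration factors $\nabla(\mu)$ for $\mu<\lambda$ read off from the Reedy decomposition of morphisms (Proposition~\ref{prop:comb}(2)): each factorization $\mu\to\lambda$ in $\Uscr_{\d}$ produces one occurrence of $\nabla(\mu)$. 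Equivalently one may verify directly that $\Ext^{i}_{\uaut(A)}(\Delta(\mu),M(\lambda))=0$ for $i\ge 1$, using the defining kernel description of $\Delta(\mu)$, the Reedy factorization to compute $\Hom$-spaces, and the Koszulity of $\mathbb{K}\Uscr_{\u}$ and $\mathbb{K}\Uscr_{\d}$ (Proposition~\ref{prop:comb}(1)) to propagate this to higher $\Ext$'s.

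For (1), given (2), one inducts on the lengths of $\nabla$-filtrations of $X,Y\in\Fscr(\nabla)$. Since we work over a field, tensoring a short exact sequence of comodules by another comodule remains exact, and $\Fscr(\nabla)$ is closed under extensions, so the induction reduces the question to $\nabla(\lambda)\otimes\nabla(\mu)\in\Fscr(\nabla)$. Tensoring the defining surjection $M(\lambda)\twoheadrightarrow\nabla(\lambda)$ (whose kernel $K_\lambda\in\Fscr(\nabla)$ has filtration factors with strictly smaller index by the proof of (2)) with $M(\mu)$ and using monoidality of $M$ to get $M(\lambda)\otimes M(\mu)=M(\lambda\mu)\in\Fscr(\nabla)$ by (2), a further diagram chase combined with a secondary induction on $\mu$ (applied to $\nabla(\mu)$ via $M(\mu)\twoheadrightarrow\nabla(\mu)$) yields $\nabla(\lambda)\otimes\nabla(\mu)\in\Fscr(\nabla)$. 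The statement for $\Fscr(\Delta)$ is dual.
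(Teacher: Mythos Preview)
The paper itself gives no explicit proof of this corollary; it is stated as a consequence of Theorems~\ref{th:quasiher} and~\ref{th:heredity} (the detailed arguments are in the cited paper \cite{MR3570144}). So your proposal has to be judged on its own merits rather than compared line by line with a proof in the text.

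Your overall architecture is the right one: establish (2) first via the explicit description \eqref{ref-1.3-8} together with the Reedy/Koszul structure of $\Uscr$, and then deduce (1) from (2) using monoidality of $M$ and the $\Ext$-characterisation of $\Fscr(\nabla)$. The reduction in (1) is essentially correct once you make explicit the fact you are using: in a highest weight category, if $0\to A\to B\to C\to 0$ is exact with $A,B\in\Fscr(\nabla)$ then $C\in\Fscr(\nabla)$, because $\Ext^i(\Delta(\nu),-)$ vanishes on $\Fscr(\nabla)$ for all $i\ge 1$. With that stated, your double induction (first on $\lambda$ to get $\nabla(\lambda)\otimes M(\mu)\in\Fscr(\nabla)$, then on $\mu$) goes through.

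The genuine gap is in (2). Your inference that ``the exact sequence \eqref{ref-1.4-11} says that $\nabla(\lambda)$ has injective envelope exactly $M(\lambda)$ in $C_{\le\lambda}$'' is not a correct reading of Theorem~\ref{th:heredity}: that sequence concerns natural endomorphisms of the functor $M$ restricted to $\Uscr_{\Lambda_i}$, and encodes the heredity-ideal condition, not an identification of $M(\lambda)$ with an injective hull. What actually has to be shown is that the canonical surjection $M(\lambda)\twoheadrightarrow\nabla(\lambda)$ has kernel filtered by $\nabla(\mu)$ with $\mu<\lambda$, and this is where the Reedy structure (Proposition~\ref{prop:comb}(2)) and the Koszulity of $\mathbb{K}\Uscr_\u,\mathbb{K}\Uscr_\d$ (Proposition~\ref{prop:comb}(1)) do the real work: the Reedy factorisation stratifies the image of $\bigoplus_{\mu<\lambda}M(\mu)\to M(\lambda)$ by the $\Uscr_\d$-degree, and Koszulity controls the associated graded. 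Your two sketched routes (``each $\Uscr_\d$-factorisation contributes one $\nabla(\mu)$'' and ``$\Ext^i(\Delta(\mu),M(\lambda))=0$ via Koszulity'') are both in the right direction, but neither is yet an argument; in particular the second needs you to identify a projective/injective resolution of $\Delta(\mu)$ or $\nabla(\mu)$ built from the $M(\nu)$, which is precisely what the Koszul complexes for $\mathbb{K}\Uscr_\u$ and $\mathbb{K}\Uscr_\d$ provide. Until that is written out, the step ``$K\in\Fscr(\nabla)$'' remains an assertion.
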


So we see that the $M(\lambda)$ indeed play a role analogous to the tensor products of the $\wedge^iV$ for $\GL(V)$. In fact, it turns out that the linearisation of $\Uscr$ 
is equivalent to $\Vscr$, and we even obtain an analogue of Theorem \ref{th:derivedeq}.
\begin{theorem}
\label{th:mainth}
The monoidal functor 
\begin{equation}
M:\mathbb{K}\Uscr\r \Rep_{\mathbb{K}}(\uaut(A)):\lambda\mapsto M(\lambda)
\end{equation}
is  fully faithful and its essential image is $\Vscr$. Moreover, the derived version of $M$
\begin{equation}
\label{eq:derived}
M:\perf(\Uscr^{\opp})\r \derived^b(\Rep_{\mathbb{K}}(\uaut(A)))
\end{equation}
induced by $\mathbb{K}\Uscr(-,\lambda)\mapsto M(\lambda)$ is an equivalence of monoidal triangulated categories.
\end{theorem}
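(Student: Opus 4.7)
The plan is to first establish that $M:\mathbb{K}\Uscr\to \Rep_{\mathbb{K}}(\uaut(A))$ is fully faithful, then identify its essential image with $\Vscr$ as a formality, and finally upgrade to the derived statement using standard tilting theory for quasi-hereditary coalgebras. To prove fully faithfulness, I would compute both sides of
\[
\mathbb{K}\Uscr(\lambda,\mu)\to \Hom_{\uaut(A)}(M(\lambda),M(\mu))
\]
and match them summand by summand. The Reedy structure from Proposition \ref{prop:comb}(2) shows that every morphism in $\Uscr$ factors uniquely as $\lambda\to\nu\to\mu$ with the first arrow in $\Uscr_{\d}$ and the second in $\Uscr_{\u}$, giving
\[
\mathbb{K}\Uscr(\lambda,\mu)=\bigoplus_{\nu\in\Lambda}\mathbb{K}\Uscr_{\u}(\nu,\mu)\otimes_{\mathbb{K}}\mathbb{K}\Uscr_{\d}(\lambda,\nu).
\]
On the representation-theoretic side, Corollary \ref{cor:filtrations}(\ref{cor:Mlambda}) says $M(\lambda)\in\Fscr(\Delta)$ and $M(\mu)\in\Fscr(\nabla)$, and the $\Ext^i$-vanishing between standards and costandards (Definition \ref{def:qh}) together with BGG-type reciprocity for quasi-hereditary coalgebras yields
\[
\dim\Hom_{\uaut(A)}(M(\lambda),M(\mu))=\sum_{\nu\in\Lambda}[M(\lambda):\Delta(\nu)]\cdot[M(\mu):\nabla(\nu)].
\]

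The next step is to identify, for each $\nu$, $\mathbb{K}\Uscr_{\d}(\lambda,\nu)$ with the $\Delta(\nu)$-multiplicity space in $M(\lambda)$, and dually $\mathbb{K}\Uscr_{\u}(\nu,\mu)$ with the $\nabla(\nu)$-multiplicity space in $M(\mu)$. Both identifications should flow from the explicit descriptions in \eqref{ref-1.3-8} combined with the Koszulity of $\mathbb{K}\Uscr_{\u}$ and $\mathbb{K}\Uscr_{\d}$ from Proposition \ref{prop:comb}(1), which gives the Reedy pieces a tractable homological structure. Once fully faithfulness is in hand, the essential image of $M$ is precisely $\Vscr$: the generators $r_l\in\Uscr$ map to $R_l$, and since $M$ is monoidal its image on objects is the set of tensor products of the $R_l$'s and $R_d^{-1}$, which by definition exhausts the objects of $\Vscr$.

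For the derived statement I would follow the strategy used in the proof of Theorem \ref{th:derivedeq}. By Theorem \ref{th:quasiher}, the $\nabla(\lambda)$'s form a full exceptional collection in $\derived^b(\Rep_{\mathbb{K}}(\uaut(A)))$, and the formulae \eqref{ref-1.3-8} express each $\nabla(\lambda)$ (and, via duality, each $\Delta(\lambda)$) as a finite complex of $M(\mu)$'s, so the $M(\lambda)$'s thick-generate the target. Combined with the $\Ext^{>0}$-vanishing among the $M(\lambda)$'s (which follows from the fact that they lie in $\Fscr(\Delta)\cap\Fscr(\nabla)$ by Corollary \ref{cor:filtrations}(\ref{cor:Mlambda})) and the fully faithfulness already proved at $\Hom$-level, the derived functor $\perf(\Uscr^{\opp})\to\derived^b(\Rep_{\mathbb{K}}(\uaut(A)))$ sending $\mathbb{K}\Uscr(-,\lambda)\mapsto M(\lambda)$ is fully faithful on the representable generators and hence on all of $\perf(\Uscr^{\opp})$, and essentially surjective by the generation statement above. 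Monoidality is inherited from the monoidal structure on $M$.

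The hard part will be the $\nu$-by-$\nu$ matching in the first step: extracting from the Reedy factorisation of $\Uscr$ the precise multiplicities $[M(\lambda):\Delta(\nu)]$ and $[M(\mu):\nabla(\nu)]$, and verifying that the natural $\Uscr$-morphisms realise bases of the corresponding multiplicity spaces. The remaining steps are essentially a faithful translation of the tilting-theoretic arguments for highest weight categories (illustrated for $\GL(V)$ in Section \ref{sec:gln}) into the coalgebraic framework of Section \ref{sec:highest}.
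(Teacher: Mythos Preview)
The paper is a survey and does not supply its own proof of Theorem~\ref{th:mainth}; the result is quoted from \cite{MR3570144}. Your outline is nonetheless the natural one and is in line both with the ingredients the survey assembles (the Reedy factorisation of Proposition~\ref{prop:comb}, the quasi-hereditary structure and explicit (co)standards of Theorem~\ref{th:quasiher}, the tilting property of the $M(\lambda)$ from Corollary~\ref{cor:filtrations}) and with the template the paper does spell out in the analogous Theorem~\ref{th:derivedeq}. The dimension-matching strategy you describe for full faithfulness---Reedy decomposition of $\mathbb{K}\Uscr(\lambda,\mu)$ on one side, the pairing $\sum_\nu[M(\lambda):\Delta(\nu)][M(\mu):\nabla(\nu)]$ on the other---is exactly how the argument runs in the source.

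One small correction for the derived step: the formulae \eqref{ref-1.3-8} present $\nabla(\lambda)$ and $\Delta(\lambda)$ as a \emph{cokernel} and a \emph{kernel}, not as finite complexes of $M(\mu)$'s, so they do not by themselves place the (co)standards in the thick subcategory generated by the $M(\mu)$. The generation argument you actually need is the inductive one on $(\Lambda,\le)$: since $M(\lambda)\in\Fscr(\nabla)$ with $[M(\lambda):\nabla(\lambda)]=1$ and all other filtration subquotients $\nabla(\mu)$ satisfying $\mu<\lambda$, induction on the poset puts every $\nabla(\lambda)$---and hence all of $\derived^b(\Rep_{\mathbb{K}}(\uaut(A)))$---into the thick subcategory generated by the $M(\mu)$. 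You essentially say this elsewhere in your sketch, so this is only a matter of pointing to the right justification.
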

\begin{corollary}
The representation ring of $\uaut(A)$ is given by 
\begin{equation}
\ZZ\langle r_1, \ldots, r_{d-1},r_d,r_d^{-1} \rangle
\end{equation}
where $r_i$ corresponds to $[R_i]$.
\end{corollary}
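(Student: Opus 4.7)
The plan is to apply the Grothendieck group functor $K_0$ to the monoidal triangulated equivalence of Theorem \ref{th:mainth}, reducing the problem to computing $K_0(\perf(\Uscr^{\opp}))$, which is an essentially combinatorial object governed by the monoid $\Lambda$.

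Under the equivalence $M:\perf(\Uscr^{\opp})\xrightarrow{\sim}\derived^b(\Rep_{\mathbb{K}}(\uaut(A)))$, the representable $\mathbb{K}\Uscr(-,\lambda)$ maps to $M(\lambda)$, and monoidality combined with $\mathbb{K}\Uscr(-,\lambda)\otimes\mathbb{K}\Uscr(-,\mu)=\mathbb{K}\Uscr(-,\lambda\mu)$ yields a well-defined ring morphism
\[
\Phi:\ZZ\Lambda\longrightarrow K_0(\Rep_{\mathbb{K}}(\uaut(A))),\qquad \lambda\mapsto[M(\lambda)],
\]
sending $r_i\mapsto[R_i]$. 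Since $\Lambda=\langle r_1,\ldots,r_{d-1},r_d^{\pm 1}\rangle$ is the free monoid on these generators with $r_d$ invertible, $\ZZ\Lambda$ is naturally identified with $\ZZ\langle r_1,\ldots,r_{d-1},r_d,r_d^{-1}\rangle$, so it is enough to show that $\Phi$ is an isomorphism.

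Surjectivity is immediate: by Theorem \ref{th:mainth} the objects $M(\lambda)$ generate $\derived^b(\Rep_{\mathbb{K}}(\uaut(A)))$ as a triangulated category, hence their classes generate $K_0$ as an abelian group. For injectivity I would use the quasi-hereditary structure of Theorem \ref{th:quasiher}: the interval-finiteness of $\Lambda$ together with the finite length of every comodule implies that $\{[L(\lambda)]\}_{\lambda\in\Lambda}$ is a $\ZZ$-basis of $K_0(\Rep_{\mathbb{K}}(\uaut(A)))$. The next step is to establish the unitriangular expansion
\[
[M(\lambda)] = [L(\lambda)] + \sum_{\mu<\lambda} c_{\lambda\mu}\,[L(\mu)]
\]
by induction along the partial order: via \eqref{ref-1.3-8} we have $\Delta(\lambda)\hookrightarrow M(\lambda)$ with cokernel embedded in $\bigoplus_{\lambda\to\mu,\,\mu<\lambda}M(\mu)$, and the induction hypothesis applied to these smaller $M(\mu)$, combined with the standard facts $[\Delta(\lambda):L(\lambda)]=1$ and $[\Delta(\lambda):L(\nu)]\ne 0\Rightarrow\nu\le\lambda$, yields the claim. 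Invertibility of the resulting change-of-basis matrix then shows that $\{[M(\lambda)]\}_{\lambda\in\Lambda}$ is itself a $\ZZ$-basis of $K_0$, so $\Phi$ is injective and hence an isomorphism.

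The main obstacle is making the inductive step rigorous: one needs the well-foundedness of the partial order on $\Lambda$ generated by $r_{a+b}<r_ar_b$ and $r_{d-a-b}<r_{d-a}r_d^{-1}r_{d-b}$ so that the induction terminates, together with a careful bookkeeping of composition factors through the embedding of $M(\lambda)/\Delta(\lambda)$ into the direct sum of the smaller $M(\mu)$. Once this is secured, the remainder of the argument is essentially formal.
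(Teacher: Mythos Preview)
Your argument is correct and essentially matches the paper's: show that $\{[M(\lambda)]\}_{\lambda\in\Lambda}$ is a $\ZZ$-basis of the Grothendieck group via a unitriangular change of basis, then use monoidality of $M$ to identify the ring structure with $\ZZ\Lambda$. The one difference is that the paper compares with the costandard basis $[\nabla(\lambda)]$ rather than the simple basis $[L(\lambda)]$, invoking Corollary~\ref{cor:filtrations}\eqref{cor:Mlambda} to get $M(\lambda)\in\Fscr(\nabla)$; since this $\nabla$-filtration is automatically finite (as $M(\lambda)$ is finite-dimensional) and each $\nabla(\mu)$ occurring satisfies $\mu\le\lambda$ by~\eqref{ref-1.3-8}, unitriangularity follows immediately and your stated obstacle about well-foundedness of $<$ never arises.
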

\begin{proof}
In a quasi-hereditary coalgebra, the costandard representations $[\nabla(\lambda)]$ form a $\ZZ$-basis of the representation ring $G_0(\uaut(A))$. By Corollary \ref{cor:filtrations}\eqref{cor:Mlambda}, $M(\lambda) \in \Fscr(\nabla)$, and one can show that they are related to the costandard representations by a unitriangular matrix. Hence, the $[M(\lambda)]$ also form a $\ZZ$-basis of $G_0(\uaut(A))$. Since $M$ is monoidal and maps $r_i$ to $M(r_i)=R_i$, we are done.
\end{proof}

This gives yet more motivation for thinking of $\uaut(A)$ as a noncommutative version of the coordinate ring of $\GL(V)$, since the representation ring of $\GL_d$ is of the form $\ZZ[r_1,\ldots,r_{d-1},r_d,r_d^{-1}]$.

\subsection{Co-Morita equivalences}

Two bialgebras are said to be \textit{co-Morita equivalent} if their monoidal categories of comodules are equivalent, as monoidal $\mathbb{K}$-linear categories. In Theorem \ref{th:mainth}, the domain category does not depend on the specific $A$ we started with, but only on its global dimension $d$. In fact, the equivalence \eqref{eq:derived} may be used to transfer the standard $t$-structure on the derived category $\derived^b(\Rep_{\mathbb{K}}(\uaut(A)))$ to one on $\perf(\Uscr^{\opp})$. This can be used to give an intrinsic description of the induced $t$-structure referring solely to  properties of $\Uscr$. As a corollary we obtain:

\begin{theorem}
\label{comorita}
The category $\Rep_{\mathbb{K}}(\uaut(A))$ as a monoidal category only depends on the global dimension of $A$ and not on $A$ itself.
\end{theorem}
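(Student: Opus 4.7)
The plan is to leverage the monoidal triangulated equivalence
\[
M:\perf(\Uscr^{\opp}) \xrightarrow{\sim} \derived^b(\Rep_{\mathbb{K}}(\uaut(A)))
\]
from Theorem \ref{th:mainth}. The source category is built purely from the monoidal category $\Uscr$, whose generators $r_1,\ldots,r_{d-1}, r_d^{\pm 1}$ and defining relations \eqref{rel:1}--\eqref{rel:4} involve only the integer $d$, so all of the dependence on $A$ is encoded inside the equivalence $M$ itself. Since $\Rep_{\mathbb{K}}(\uaut(A))$ is the heart of the standard $t$-structure on the right-hand side, the strategy is to transport this $t$-structure to $\perf(\Uscr^{\opp})$ along $M^{-1}$ and then to recognize it as arising from purely $\Uscr$-intrinsic data.

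First I would perform the transport itself, which is formal: define $\perf(\Uscr^{\opp})^{\leq 0}$ and $\perf(\Uscr^{\opp})^{\geq 0}$ as the preimages under $M$ of the standard aisles. Monoidality of $M$ equips the resulting heart $\Hscr$ with a monoidal structure and yields a monoidal equivalence $\Hscr \simeq \Rep_{\mathbb{K}}(\uaut(A))$. By Corollary \ref{cor:filtrations}\eqref{cor:Mlambda}, each representable $\mathbb{K}\Uscr(-,\lambda)$ is carried by $M$ to $M(\lambda) \in \Fscr(\Delta)\cap \Fscr(\nabla)$, hence lies in $\Hscr$; since the representables generate $\perf(\Uscr^{\opp})$ under shifts and cones, the transferred $t$-structure is pinned down by cohomological vanishing conditions on $\Hom$-spaces between representables.

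The main obstacle, exactly as flagged in the remark immediately preceding the theorem, is showing that this $t$-structure has a description referring only to $\Uscr$. The natural tool is the Reedy decomposition of Proposition \ref{prop:comb} combined with the Koszulity of the halves $\mathbb{K}\Uscr_{\u}$ and $\mathbb{K}\Uscr_{\d}$. Using these features one can write down, inside $\perf(\Uscr^{\opp})$, explicit model complexes representing the costandards $\nabla(\lambda)$ along the lines of \eqref{ref-1.3-8}: each such complex is built from representables $\mathbb{K}\Uscr(-,\mu)$ with $\mu\le \lambda$ via morphisms supplied by the Reedy factorization. Crucially, the proof of the key exact sequence \eqref{ref-1.4-11} in Theorem \ref{th:heredity} uses only the combinatorial structure of $\Uscr$, and so adapts to produce an intrinsic heredity cochain for $\coend_{\Uscr}(M)$. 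This then realizes $\Hscr$ as the extension-closure inside $\perf(\Uscr^{\opp})$ of the intrinsically defined costandards, equipped with the monoidal structure inherited from $\Uscr$.

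Granting the intrinsic description, the theorem follows at once: for two Koszul AS-regular algebras $A$ and $A'$ of the same global dimension $d$, the associated categories $\perf(\Uscr^{\opp})$ and their intrinsic hearts $\Hscr$ coincide, and transporting back along $M$ and the analogous equivalence $M'$ produces the desired chain of monoidal equivalences $\Rep_{\mathbb{K}}(\uaut(A)) \simeq \Hscr \simeq \Rep_{\mathbb{K}}(\uaut(A'))$.
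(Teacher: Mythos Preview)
Your proposal is correct and follows exactly the argument the paper sketches in the paragraph immediately preceding the theorem: transport the standard $t$-structure along the monoidal triangulated equivalence of Theorem~\ref{th:mainth} and then verify that the resulting $t$-structure on $\perf(\Uscr^{\opp})$ admits a description referring only to the combinatorics of $\Uscr$. You have also correctly isolated the only nontrivial step, namely this intrinsic description, and the tools you invoke (the Reedy structure and the Koszulity of $\mathbb{K}\Uscr_{\u}$, $\mathbb{K}\Uscr_{\d}$ from Proposition~\ref{prop:comb}) are the right ones.

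It is worth noting that the paper then supplies a second, independent proof via Theorem~\ref{th:dualhopf}. There one forms the bicomodule $\cohom_{\Uscr}(M_A,M_B)$ for two functors $M_A,M_B:\Uscr\to\Vect_{\mathbb{K}}$ associated with Koszul AS-regular algebras $A,B$ of the same global dimension, and shows it is nonzero by restricting to the one-object subcategory $\mathbf{1}\subset\Uscr$ and invoking an analogue of the exact sequence \eqref{ref-1.4-11}. This route is more economical: it bypasses the intrinsic $t$-structure description entirely and reduces the comparison to a single non-vanishing check. Your approach, by contrast, yields the stronger statement that the abelian monoidal category is literally realised inside $\perf(\Uscr^{\opp})$ in an $A$-independent way, at the cost of having to pin down that realisation. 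Both arguments ultimately rest on the same structural input, namely the heredity machinery of Theorem~\ref{th:heredity}.
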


In other words, by letting $A$ vary we obtain non-trivial examples of co-Morita equivalent\index{co-Morita equivalence} Hopf algebras~\cite[\S5]{MR1408508}. 

This somewhat curious corollary to Theorem \ref{th:mainth} turns out to be a special case of a much more general phenomenon. Given two monoidal functors $F,G:\Cscr \to \Vect_{\mathbb{K}}$ on a fixed rigid monoidal category $\Cscr$, one can form an algebra $\cohom(F,G)$, by using a slight variation of Remark \ref{rem:abstract-coend}.

\begin{theorem}\cite{thesis}
\label{th:dualhopf}
If $\cohom(F,G) \neq 0$, then cotensoring with it induces an equivalence 
\begin{equation}
\Rep_{\mathbb{K}}(\coend(G)) \to \Rep_{\mathbb{K}}(\coend(F))
\end{equation}
of monoidal categories.
\end{theorem}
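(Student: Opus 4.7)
The plan is to construct an explicit quasi-inverse to $-\square_{\coend(G)} \cohom(F,G)$ using the symmetric object $\cohom(G,F)$, in the style of a coalgebra-Morita argument. Recall that $\cohom(F,G)$ is built as the coend $\int^{X \in \Cscr} GX^* \otimes FX$ by the same recipe as \eqref{eq:coend}; it carries a natural left $\coend(F)$-coaction inherited from $FX$ and a right $\coend(G)$-coaction inherited from $GX^*$, making it a $(\coend(F), \coend(G))$-bicomodule. Hence the cotensor functor $-\square_{\coend(G)} \cohom(F,G)\colon \Rep_{\mathbb{K}}(\coend(G)) \to \Rep_{\mathbb{K}}(\coend(F))$ is well-defined and $\mathbb{K}$-linear.

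Next, using the monoidal structures of $F$ and $G$, I would construct natural composition maps
\begin{equation}
\cohom(F,G) \otimes \cohom(G,H) \to \cohom(F,H),
\end{equation}
via the evaluation pairings $GX^* \otimes GX \to \mathbb{K}$ together with the canonical isomorphisms $F(X \otimes Y) \cong FX \otimes FY$ and similarly for $G$ and $H$. These maps descend to the respective cotensor products, and the specializations $H = F$ and $H = G$ yield bicomodule maps
\begin{equation}
\cohom(F,G) \square_{\coend(G)} \cohom(G,F) \to \coend(F), \quad \cohom(G,F) \square_{\coend(F)} \cohom(F,G) \to \coend(G).
\end{equation}

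The main step, and in my view the main obstacle, is to upgrade the hypothesis $\cohom(F,G) \neq 0$ to bijectivity of both of these composition maps. Rigidity of $\Cscr$ is essential here: any non-zero element of $\cohom(F,G)$ can be ``transposed'' using the left and right duals in $\Cscr$ to produce a candidate inverse in $\cohom(G,F)$, in the spirit of the classical fact that a tensor natural transformation between fiber functors on a rigid tensor category is automatically a tensor natural isomorphism (Deligne--Milne). The technical content is to show that this local invertibility assembles into genuine mutually inverse bicomodule maps; one natural route is to unpack the coend definition and verify that the composition, read on a single summand $GX^* \otimes FX \otimes FX^* \otimes GX$, is forced to coincide with the identity modulo the coend relations, using the rigid structure to replace arbitrary $X \in \Cscr$ by their duals.

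Once both composition maps are established as isomorphisms, standard coalgebra-Morita formalism implies that $-\square_{\coend(G)} \cohom(F,G)$ and $-\square_{\coend(F)} \cohom(G,F)$ are mutually inverse equivalences of $\mathbb{K}$-linear categories. It remains to equip the equivalence with a monoidal structure; for this, I would exhibit a canonical comultiplication-like map $\cohom(F,G) \to \cohom(F,G) \otimes \cohom(F,G)$ arising from duplicating the object $X$ in the defining coend and from the bialgebra structures on $\coend(F)$ and $\coend(G)$. This provides the natural isomorphisms $(V \square \cohom(F,G)) \otimes (W \square \cohom(F,G)) \cong (V \otimes W) \square \cohom(F,G)$ for $V, W \in \Rep_{\mathbb{K}}(\coend(G))$, which is precisely the coherence data required for the equivalence to be monoidal.
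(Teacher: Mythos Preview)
The paper does not actually prove this theorem: it is quoted from \cite{thesis} and then used as a black box to give a second proof of Theorem~\ref{comorita}. So there is no in-paper argument to compare against, and your proposal must be judged on its own merits.

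Your outline follows the standard bi-Galois/Morita--Takeuchi route and correctly isolates rigidity of $\Cscr$ as the mechanism that upgrades ``non-zero'' to ``invertible'', in the Deligne--Milne spirit. Two points, however, need repair. First, the ``composition map'' $\cohom(F,G)\otimes\cohom(G,H)\to\cohom(F,H)$ built from the evaluation $GX^{*}\otimes GX\to\mathbb{K}$ is not well-defined as you describe it: the evaluation only pairs the $X$-summand with itself, while the two coends range over independent objects, and the coend relations do not force these to match. What the universal property of the coend \emph{does} give you (via the universal coactions $FX\to GX\otimes\cohom(F,G)$) is a map in the opposite direction, $\cohom(F,H)\to\cohom(G,H)\otimes\cohom(F,G)$, which for $H=F$ yields $\coend(F)\to\cohom(G,F)\square_{\coend(G)}\cohom(F,G)$; it is this map (and its partner with $F$ and $G$ swapped) whose bijectivity you must establish. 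Second, the structure that actually drives the argument is the \emph{algebra} structure on $\cohom(F,G)$ coming from monoidality of $F$ and $G$ (sending the pair $GX^{*}\otimes FX$, $GY^{*}\otimes FY$ to $G(XY)^{*}\otimes F(XY)$), together with the bicomodule structure. The clean formulation is then that the Galois map $\cohom(F,G)\otimes\cohom(F,G)\to\coend(F)\otimes\cohom(F,G)$ is bijective; rigidity lets you write down an explicit inverse using duals in $\Cscr$, and Ulbrich's theorem (or Schauenburg's bi-Galois formalism) then delivers both the equivalence and its monoidal structure at once, so your final paragraph becomes redundant.
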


\begin{remark}
\label{rem:cohom}
The relation $\cohom(F,G)\neq 0$ is actually an equivalence relation and divides
up the monoidal functors $\Cscr\r \Vect_{\mathbb{K}}$ into connected components whose members yield co-Morita equivalent Hopf algebras.
\end{remark}

\begin{proof}[Proof of Theorem \ref{comorita} using Theorem \ref{th:dualhopf}]
Consider two Koszul AS-regular algebras $A$ and $B$, both of global dimension $d$. Section \ref{sec:autA-qh} provides us with two monoidal functors $M_A,M_B:\Uscr \to \Vect_{\mathbb{K}}$, such that 
\begin{align}
\coend(M_A)&\cong \uaut(A), \\
\coend(M_B)&\cong \uaut(B).
\end{align}
By Theorem \ref{th:dualhopf}, it suffices to show that $\cohom_{\Uscr}(M_A,M_B) \neq 0$. 
Consider the (saturated) subset $\{1\} \subset \Lambda$, and the corresponding one-object category $\mathbf{1} \subset \Uscr$. By a suitable analogue of Theorem \ref{th:heredity} one shows that  
\begin{equation}
\cohom_{\bf{1}}(M_{A}|_{\bf{1}},M_{B}|_{\bf{1}})\hookrightarrow\cohom_{\Uscr}(M_A,M_B).
\end{equation}
Since $\mathbb{K}\cong \cohom_{\bf{1}}(M_{A}|_{\bf{1}},M_{B}|_{\bf{1}})\neq 0$, we are done.
\end{proof}

\section{Representations of $\uend(A)$}
\label{sec:repsend}
One might wonder whether the relation between $\uend(A)$ and $\uaut(A)$ is as close as the relation between $\Oscr(M_n)$ and $\Oscr(\GL(V))$ (introduced in Section \ref{sec:gln}). That this is indeed the case follows from the following proposition. Denote by $\Lambda^+=\langle r_1,\ldots,r_d \rangle \subset \Lambda$.
\begin{proposition}
\label{prop:subalgebra}
The bialgebra $\uend(A)$ is the minimal subcoalgebra of $\uaut(A)$ whose representations
have simple composition factors belonging to the set $\{L(\lambda)_{\lambda\in \Lambda^+}\}$.
\end{proposition}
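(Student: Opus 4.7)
The plan is to combine the Tannakian description $\uend(A) = \coend_{\Cscr}(F)$ from Section~\ref{section:tk-end} with the quasi-hereditary structure on $\uaut(A)$ established in Section~\ref{sec:autA-qh}. The key combinatorial input will be the saturation of the submonoid $\Lambda^+$ inside the ordered monoid $\Lambda$. The strategy is to characterize $\Rep_{\mathbb{K}}(\uend(A))$, sitting inside $\Rep_{\mathbb{K}}(\uaut(A))$ via the canonical injection coming from the Hopf envelope construction, as exactly the Serre subcategory of comodules with composition factors in $\{L(\lambda)\mid\lambda\in\Lambda^+\}$.

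\emph{Saturation of $\Lambda^+$.} I would first verify that $\mu\le\lambda\in\Lambda^+$ implies $\mu\in\Lambda^+$. The partial order on $\Lambda$ is the left- and right-invariant closure of the two basic relations $r_{a+b}<r_ar_b$ (for $a+b\le d$) and $r_{d-a-b}<r_{d-a}r_d^{-1}r_{d-b}$. The first relation visibly preserves $\Lambda^+$, while the second relation involves $r_d^{-1}$ on its right-hand side and hence can only appear in a reduction chain for words that already contain $r_d^{-1}$. Since no element of $\Lambda^+$ contains $r_d^{-1}$, every descent chain starting in $\Lambda^+$ stays in $\Lambda^+$.

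\emph{Two-sided characterization of the $\uend(A)$-comodules.} For each $\lambda\in\Lambda^+$ the object $M(\lambda)$ is built as an intersection of tensor-flag subspaces inside $V^{\otimes|\lambda|}$ using only the $R_i$, so it is naturally a $\uend(A)$-comodule; since $L(\lambda)$ is a composition factor of $M(\lambda)$ by the quasi-hereditary structure of Theorem~\ref{th:quasiher}, each such $L(\lambda)$ is a $\uend(A)$-comodule. Conversely, the presentation of $\uend(A) = \coend_\Cscr(F)$ from Section~\ref{section:tk-end} shows that every $\uend(A)$-comodule is a subquotient of some tensor power $V^{\otimes n} = M(r_1^n)$. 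By Corollary~\ref{cor:filtrations}, $M(r_1^n)\in\Fscr(\nabla)$; its $\nabla$-filtration involves only $\nabla(\mu)$ with $\mu\le r_1^n$, and by saturation every such $\mu$ lies in $\Lambda^+$. Therefore every $\uend(A)$-comodule has all its simple composition factors in $\{L(\lambda)\mid\lambda\in\Lambda^+\}$.

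\emph{Minimality.} Any subcoalgebra $C\subset\uaut(A)$ whose representations (including the $M(\lambda)$'s once these are shown to be $C$-comodules) have simple composition factors only in $\{L(\lambda)\mid\lambda\in\Lambda^+\}$ and which contains each $L(\lambda)$, $\lambda\in\Lambda^+$, as a comodule, must contain the coefficient coalgebras $C_{\uaut(A)}(L(\lambda))$ and, by propagating along the $\nabla$-filtrations of the $M(\lambda)$ whose factors themselves lie in $\Lambda^+$, the coefficient coalgebras of all the $M(\lambda)$ with $\lambda\in\Lambda^+$. Since these generate $\uend(A)$ as a subcoalgebra of $\uaut(A)$, we obtain $C \supseteq \uend(A)$. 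The main obstacle I anticipate is the control of the extension data between the simples when sewing together the coefficient coalgebras: this is exactly what is encoded by the heredity cochain provided by Theorem~\ref{th:heredity}, which shows that the filtration of $\uend(A)$ by heredity subcoalgebras matches the order ideal $\Lambda^+\subset\Lambda$. Invoking this cochain makes the minimality statement precise and completes the proof.
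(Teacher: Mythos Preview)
The paper states this proposition without proof, so there is no argument in the text to compare yours against; I will comment on your proposal directly.

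Your overall strategy is sound, and the saturation argument for $\Lambda^+\subset\Lambda$ is correct. There are, however, two genuine gaps. First, you invoke ``the canonical injection coming from the Hopf envelope construction'' without justification, but the map from a bialgebra to its Hopf envelope is not injective in general; that $\uend(A)$ is a subcoalgebra of $\uaut(A)$ is part of what must be proved here. Second, and more seriously, your minimality paragraph does not establish the inclusion you actually need. You show that every $\uend(A)$-comodule has composition factors in $\{L(\lambda)\}_{\lambda\in\Lambda^+}$, and that each such $L(\lambda)$ is a $\uend(A)$-comodule; but this does \emph{not} yield that every $\uaut(A)$-comodule with composition factors in $\Lambda^+$ is a $\uend(A)$-comodule. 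An extension $0\to L(\lambda)\to E\to L(\mu)\to 0$ in $\comod(\uaut(A))$ with $\lambda,\mu\in\Lambda^+$ could a priori fail to lie in $\comod(\uend(A))$, and nothing in your argument rules this out. Your step~3 shows only that $\uend(A)$ is contained in the subcoalgebra $C_{\Lambda^+}$ attached to the Serre subcategory, which is the same inclusion you already had from step~2.

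Both gaps close once you supply the missing identification $\uend(A)\cong\coend_{\Uscr^+_\u}(M)$, where $\Uscr^+_\u$ is the full subcategory of $\Uscr$ on $\Lambda^+$. The Reedy factorisation of Proposition~\ref{prop:comb} shows that any morphism in $\Uscr$ between objects of $\Lambda^+$ lies in $\Uscr_\u$ (no $\theta$ can fire on a word without $r_d^{-1}$), so this full subcategory really is $\Uscr^+_\u$; one then checks that adding the generators $r_3,\dots,r_d$ and the extra $\phi_{a,b}$ to the category $\Cscr$ of \S\ref{section:tk-end} does not change the coend, since their images under $M$ are already forced. With this identification in hand, Theorem~\ref{th:heredity} (iterated along a chain of saturated subsets exhausting $\Lambda^+$) yields simultaneously that $\uend(A)\hookrightarrow\uaut(A)$ and that it is precisely the subcoalgebra of $\uaut(A)$ attached to the saturated set $\Lambda^+$. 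Your final sentence gestures at this, but the identification should be made explicit and placed at the start; everything else then follows.
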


Proposition \ref{prop:subalgebra} allows us to recover most of the results from \cite{MR3299599}, which describe the representation theory of $\uend(A)$, for $A$ Koszul. For the sake of exposition, we will restate the main result of \cite{MR3299599} in the language of Section \ref{sec:repsaut}, and in the case where $A$ is also AS-regular. To do this, we need to consider the monoidal subcategory $\Uscr^+_\u$ of $\Uscr_{\u}$, generated by the objects $r_1, \ldots, r_d$.

\begin{theorem}\cite[Theorem 4.3, Corollary 4.4]{MR3299599}
\label{th:kvdb}
For a Koszul AS-regular algebra $A$ of global dimension $d$, there is a monoidal functor 
$$
M^+:\mod(\Uscr_{\u}^{+,\opp})\r \Rep_{\mathbb{K}}(\uend(A)):\Uscr^+_{\u}(-,u)\mapsto M^+(u)
$$
which is an equivalence of monoidal categories.
\end{theorem}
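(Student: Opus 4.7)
The plan is to parallel the proof of Theorem \ref{th:mainth} with the triple $(\Uscr, M, \uaut(A))$ replaced by $(\Uscr_{\u}^+, M^+, \uend(A))$, where $\Uscr_{\u}^+$ is the non-rigid monoidal subcategory of $\Uscr$ generated by $r_1,\ldots,r_d$ and the ``up'' morphisms $\phi_{a,b}$. First, I would define $M^+$ on the linearisation $\mathbb{K}\Uscr_{\u}^+$ as the restriction of the functor $M$ from Theorem \ref{th:mainth}, and then extend it to $\mod(\Uscr_{\u}^{+,\opp})$ by left Kan extension along Yoneda, so that $\Uscr_{\u}^+(-,u) \mapsto M^+(u)$ on representables. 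Since $\Uscr_{\u}^+$ lacks rigid duals, a repeat of the Tannakian argument of Section \ref{sec:autA-qh} produces only a bialgebra, yielding $\coend_{\Uscr_{\u}^+}(M^+) \cong \uend(A)$.

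To see that $M^+$ actually lands in $\Rep_{\mathbb{K}}(\uend(A)) \subset \Rep_{\mathbb{K}}(\uaut(A))$, I would invoke Proposition \ref{prop:subalgebra} together with the key observation that $\Lambda^+$ is downward-closed in $(\Lambda, \leq)$: the generating relation $r_{a+b} < r_a r_b$ preserves $\Lambda^+$, whereas $r_{d-a-b} < r_{d-a} r_d^{-1} r_{d-b}$ has its larger side outside $\Lambda^+$ and therefore never places an element of $\Lambda\setminus \Lambda^+$ below an element of $\Lambda^+$. Hence the $\nabla$-filtration of $M^+(u)$ supplied by Corollary \ref{cor:filtrations}\eqref{cor:Mlambda} has all factors $\nabla(\lambda)$ with $\lambda \in \Lambda^+$, so every simple composition factor is $L(\lambda)$ with $\lambda \in \Lambda^+$. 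The same downward-closedness shows that $\uend(A)$ inherits a quasi-hereditary structure from $\uaut(A)$ with respect to the sub-poset $(\Lambda^+, \leq)$, via direct restriction of Theorem \ref{th:heredity} and the exact sequence \eqref{ref-1.4-11}.

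With this in hand, full faithfulness of $M^+$ on $\mathbb{K}\Uscr_{\u}^+$ is a formal consequence: the restricted heredity cochain pins down $\End_{\Uscr_{\u}^+}(M^+)$ as the pseudocompact completion of the graded morphism spaces of $\mathbb{K}\Uscr_{\u}^+$. For essential surjectivity I would proceed degree by degree, exploiting the gradings on both sides (finite-dimensional in each homogeneous piece): in each degree $n$ it suffices to show that $\bigoplus_{|u|=n} M^+(u)$ is a projective generator of $\Rep_{\mathbb{K}}(\uend(A))_n$, after which classical Morita theory yields the equivalence in that degree. Monoidality is inherited from $M^+$ on $\mathbb{K}\Uscr_{\u}^+$ and extends to the Kan extension.

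The main obstacle will be the progenerator claim, which is precisely what upgrades the derived equivalence of Theorem \ref{th:mainth} to the abelian equivalence stated here. Verifying it requires the combined force of the Koszul property of $\Uscr_{\u}^+$ (Proposition \ref{prop:comb}) and the AS-regularity of $A$: Koszulity rules out higher $\Ext$-obstructions, while AS-regularity ensures that the $M^+(u)$'s exhaust all indecomposable projectives in each graded piece of $\Rep_{\mathbb{K}}(\uend(A))$. In the classical case $A = \Sym_{\mathbb{K}}(V)$ this specialises to the statement that tensor products of exterior powers assemble into a progenerator of the Schur algebra in each homogeneous degree.
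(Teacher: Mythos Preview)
Your outline is broadly sound and lands on the right target (the progenerator claim), but you bypass the one-line observation on which the paper's argument actually turns. Since $\Uscr_{\u}^+$ contains only the ``up'' generators $\phi_{a,b}$, there is no morphism $\lambda\to\mu$ with $\mu<\lambda$; feeding this into the kernel formula of Theorem~\ref{th:quasiher} gives at once
\[
\Delta(\lambda)=M(\lambda)\qquad(\lambda\in\Lambda^+).
\]
The paper records precisely this: $\uend(A)$ is quasi-hereditary with $\Delta(\lambda)=M(\lambda)$, and these standards then form a system of projective generators, so Morita theory yields the equivalence. Your route---place $M^+(u)$ in $\Fscr(\nabla)$ via Corollary~\ref{cor:filtrations}\eqref{cor:Mlambda}, then invoke ``Koszulity of $\Uscr_{\u}^+$ plus AS-regularity'' to force projectivity---circles around this identity rather than using it. In particular, once you know that $M^+(u)$ \emph{is} a standard module outright (not merely $\Delta$- or $\nabla$-filtered), the passage from the derived equivalence of Theorem~\ref{th:mainth} to the abelian one here becomes structural rather than a separate $\Ext$-computation: representables are projective on the $\Uscr_{\u}^{+,\opp}$ side by Yoneda, and the identification $\Delta=M$ says the same on the $\uend(A)$ side.

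Your saturation argument for $\Lambda^+\subset\Lambda$ is correct and is exactly what lets Proposition~\ref{prop:subalgebra} and Theorem~\ref{th:heredity} restrict cleanly. But the two-input justification you sketch for the progenerator step (``Koszulity rules out higher $\Ext$-obstructions, AS-regularity exhausts the projectives'') is vaguer than necessary and does not single out the mechanism. The paper's point is that the absence of down morphisms in $\Uscr_{\u}^+$ collapses the Reedy machinery to its ``up'' half, so that standards, tiltings, and the $M(\lambda)$ all coincide on $\Lambda^+$; everything else is bookkeeping.
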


In fact, our techniques ensure that $\uend(A)$ is quasi-hereditary with $\Delta(\lambda)=M(\lambda)$. Moreover, the $(\Delta(\lambda) )_{\lambda\in \Lambda^+}$ form a system of projective generators for 
$\Rep_{\mathbb{K}}(\uend(A))$ and we obtain the equivalence.

The representation theory of $\uend(A)$ can be understood in terms of quivers with relations as follows. For every number $n \geq 0$, set $C_n=\mathbb{K}Q_n/I$ to be the quiver with relations corresponding to an $n$-dimensional (directed) hypercube $Q_n$ with commuting faces. Depending on the global dimension $d$, we need to consider certain full subalgebras $C_{n,d} \subset C_n$, obtained by deleting some of the vertices of the hypercubes, and then there is an equivalence of (abelian) categories
\begin{equation}
\Rep_{\mathbb{K}}(\uend(A)) \cong \mod(\Uscr_{\u}^{+,\opp}) \cong \bigoplus_n \mod(C_{n,d}).
\end{equation}
Rather than spelling out the somewhat contrived (though easy to implement in practice) combinatorial rule for constructing $C_{n,d}$, we refer the reader to \cite{MR3299599}.

\section{Representations of $\uaut(\mathbb{K}[x,y])$}
\label{sec:repstwo}
Theorem \ref{comorita} tells us that to understand the representation theory of $\uaut(A)$, with $A$ of global dimension $d$, it suffices to study the representations of $\uaut(\Sym_{\mathbb{K}}(V))$, with $\dim_{\mathbb{K}}(V)=d$, and the functor 
\begin{equation}
\Rep_{\mathbb{K}}(\uaut(\Sym_{\mathbb{K}}(V))) \to \Rep_{\mathbb{K}}(\uaut(A))
\end{equation}
that realises the equivalence of monoidal categories. 

Since $\Sym_{\mathbb{K}}(V)$ is a commutative ring, one might hope that there is an even closer connection between representations of $\uaut(\Sym_{\mathbb{K}}(V))$ and $\Rep_{\mathbb{K}}(\GL(V))$. For this reason, we will denote $\uaut(\Sym_{\mathbb{K}}(V))$ by $\Oscr_{\tt nc}({{\GL_d}})$, which we think of as as some sort of noncommutative coordinate ring of $\GL(V)$.

For $V$ of dimension $1$, we find 
\begin{equation}
\Oscr_{\tt nc}({{\GL_1}})=\Oscr(T)=\mathbb{K}[t,t^{-1}],
\end{equation}
the coordinate ring of a $1$-dimensional torus, so $d=2$ is the first interesting case. In this section we review the results from \cite{MR3713007}, where this example is treated in detail. 

\subsection{A noncommutative version of the Borel and torus subgroups}

In Section \ref{sec:gln}, we saw the importance of the torus and Borel subgroups in the representation theory of $\GL(V)$. For $\Oscr_{\tt nc}(\GL_2)$ it is possible to define analogues of the coordinate rings of these subgroups, using the explicit presentation from Example \ref{ex:two}:
\begin{align}
\Oscr_{\tt nc}(B)&=\Oscr_{\tt nc}({{\GL_2}})/(b) \cong \mathbb{K} \langle c,d^{\pm 1} \rangle [a^{\pm 1}], \\
\Oscr(T)&=\Oscr_{\tt nc}({{\GL_2}})/(b,c) \cong  \mathbb{K}[a^{\pm 1},d^{\pm 1}].
\end{align}
Here $\Oscr(T)$ is the (commutative) coordinate ring of a two-dimensional torus~$T$. We identify
its character group $X(T)$  with the Laurent monomials in $a,d$. By sending $r_2\in \Lambda$ to $ad \in X(T)$ and $r_1\in \Lambda$ to $d\in X(T)$ we obtain a map of monoids ${\tt wt}:\Lambda\rightarrow X(T)$\index{${\tt wt}$}. We can now easily imitate the construction of the induced representations. 

If $t\in X(T)$, then there is an associated $1$-dimensional $\Oscr(T)$-representation $\mathbb{K}_t$
which may also be viewed as an $\Oscr_{\tt nc}(B)$-representation. Denote by $\Ind^{{\GL_2}}_B $ the
right adjoint to the restriction functor 
\begin{equation}
\Res^{\GL_2}_B:\Rep_{\mathbb{K}}(\Oscr_{\tt nc}({{\GL_2}}))\rightarrow \Rep_{\mathbb{K}}(\Oscr_{\tt nc}(B)).
\end{equation}
Then we have the following result: 

\begin{theorem} 
There is a decomposition
\[
\induced_B^{{\GL_2}}(\mathbb{K}_t) = \bigoplus_{\substack{\lambda \in \Lambda \\ {\tt wt}(\lambda)=t}} \nabla(\lambda).
\]
\end{theorem}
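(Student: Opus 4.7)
The plan is to establish the decomposition via Frobenius reciprocity, analysing the costandards $\nabla(\mu)$ through their explicit description from Theorem \ref{th:quasiher}. Since $\Ind_B^{\GL_2}$ is right adjoint to $\Res_B^{\GL_2}$, for any $\uaut(\mathbb{K}[x,y])$-comodule $V$ one has
\[
\Hom_{\GL_2}\!\bigl(V,\Ind_B^{\GL_2}(\mathbb{K}_t)\bigr) \;\cong\; \Hom_B\!\bigl(\Res V,\mathbb{K}_t\bigr).
\]
Because $\mathbb{K}_t$ is inflated from $\Oscr(T)=\Oscr_{\tt nc}(B)/(c)$, the right-hand side for $W=\Res V$ computes the $t$-weight space of the largest $T$-comodule quotient of $W$.

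For $V=\nabla(\mu)$, I would evaluate this using \eqref{ref-1.3-8} together with the monoidal equivalence $M\colon\mathbb{K}\Uscr\to\Vscr$ of Theorem \ref{th:mainth}. The $T$-weight structure of each $M(\mu)$ is fully explicit: $M(r_1)=V$ carries weights $\{a,d\}$, $M(r_2)=\wedge^2 V$ the single weight $ad$, and $M(r_2^{-1})$ the weight $(ad)^{-1}$, so each $M(\mu)$ contains a distinguished ``top'' vector of weight ${\tt wt}(\mu)$ annihilated by the coaction of $c$. The verifications needed are that this vector is not in the image of any $M(\nu)\to M(\mu)$ with $\nu<\mu$ (so it survives in $\nabla(\mu)$) and that no further $c$-coinvariant vector of weight ${\tt wt}(\mu)$ appears in the quotient. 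These yield
\[
\Hom_B\!\bigl(\Res\nabla(\mu),\mathbb{K}_t\bigr) \;=\; \delta_{{\tt wt}(\mu),t}\cdot\mathbb{K},
\]
and by Frobenius reciprocity a canonical morphism
\[
\Phi\colon\bigoplus_{\substack{\lambda\in\Lambda\\{\tt wt}(\lambda)=t}}\nabla(\lambda)\;\longrightarrow\;\Ind_B^{\GL_2}(\mathbb{K}_t),
\]
injective because the simple socles $L(\lambda)$ of the $\nabla(\lambda)$ are pairwise non-isomorphic.

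The main obstacle is surjectivity, a noncommutative Kempf-type statement. A pleasant observation handles the direct-sum aspect: the generating relations $r_{a+b}<r_ar_b$ and $r_{d-a-b}<r_{d-a}r_d^{-1}r_{d-b}$ defining the order on $\Lambda$ both strictly alter ${\tt wt}$ (for instance ${\tt wt}(r_2)=ad\neq d^2={\tt wt}(r_1r_1)$), so the order never preserves weight. Hence distinct elements of ${\tt wt}^{-1}(t)$ are pairwise incomparable, and Proposition \ref{prop:nabla}(3) gives $\Ext^1(\nabla(\lambda),\nabla(\mu))=0$ for all distinct $\lambda,\mu\in{\tt wt}^{-1}(t)$, so any $\nabla$-filtration of $\Ind_B^{\GL_2}(\mathbb{K}_t)$ by such subquotients automatically splits as a direct sum. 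It therefore suffices to exhibit such a filtration with multiplicity one at each $\lambda\in{\tt wt}^{-1}(t)$. The route I would take is to compute the cotensor product $\Oscr_{\tt nc}(\GL_2)\,\square_{\Oscr_{\tt nc}(B)}\,\mathbb{K}_t$ directly from the presentation in Example \ref{ex:two}, using a normal-form basis of $\Oscr_{\tt nc}(\GL_2)$ adapted to the quotient by $(b)$; alternatively one transfers the calculation via Theorem \ref{th:mainth} to $\Uscr$ and exploits the Reedy decomposition $f=f_\u\circ f_\d$ from Proposition \ref{prop:comb}(2) together with the Koszulity of Proposition \ref{prop:comb}(1) to build the filtration while killing higher derived induction functors. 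This last step --- producing the $\nabla$-filtration of an infinite-dimensional comodule --- is the crux of the argument.
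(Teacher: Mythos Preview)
The paper is a survey and states this theorem without proof, deferring to \cite{MR3713007}; so there is no argument here to compare against directly. I can only comment on your proposal on its own terms.

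Your key structural observation is correct and is the heart of the matter: for $d=2$ the two generating relations of the order each multiply ${\tt wt}$ by $d/a$, so comparable elements of $\Lambda$ always have distinct weights and ${\tt wt}^{-1}(t)$ is an antichain. Combined with the quasi-hereditary $\Ext^1$-vanishing this forces any $\nabla$-filtration of $\Ind_B^{\GL_2}(\mathbb{K}_t)$ by $\nabla(\lambda)$ with $\lambda\in{\tt wt}^{-1}(t)$ to split. So the whole theorem reduces, as you say, to producing such a filtration with multiplicity one at each $\lambda$.

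Two points where your write-up is shakier than it needs to be. First, the injectivity of $\Phi$ does not follow from ``pairwise non-isomorphic simple socles'': that argument controls maps \emph{into} an object with simple socle, not maps \emph{out of} one. A nonzero map $\nabla(\lambda)\to\Ind_B^{\GL_2}(\mathbb{K}_t)$ need not be injective a priori, nor need the images for distinct $\lambda$ be independent. Second, the claim $\Hom_B(\Res\nabla(\mu),\mathbb{K}_t)=\delta_{{\tt wt}(\mu),t}\cdot\mathbb{K}$ amounts to saying that $\Res\nabla(\mu)$ has a \emph{unique} one-dimensional $\Oscr_{\tt nc}(B)$-quotient, of weight ${\tt wt}(\mu)$; your ``top vector'' discussion gives existence but not uniqueness. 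Both issues evaporate if you drop the Frobenius-reciprocity detour and argue directly: once you know $\Ind_B^{\GL_2}(\mathbb{K}_t)$ has a $\nabla$-filtration with exactly the subquotients $\{\nabla(\lambda):{\tt wt}(\lambda)=t\}$, your antichain observation finishes the proof immediately, and the Hom identity then follows \emph{a posteriori}.

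So the entire content is the step you flag as ``the crux'': exhibiting the $\nabla$-filtration of the (generally infinite-dimensional) comodule $\Ind_B^{\GL_2}(\mathbb{K}_t)=\Oscr_{\tt nc}(\GL_2)\,\square_{\Oscr_{\tt nc}(B)}\,\mathbb{K}_t$. Your two suggested routes (a normal-form/PBW basis adapted to the quotient by $(b)$, or transporting to $\Uscr$ and using the Reedy/Koszul structure) are both plausible, but neither is carried out, and this is where all the work lies. As written the proposal is a sound strategy with the decisive computation still missing.
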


In particular, we see that $\induced^{{\GL_2}}_B(\mathbb{K}_t)=0$ if $t\not\in X(T)^+:=\im {\tt wt}$. This agrees with  the commutative case where only dominant weights yield non-zero representations under induction. But we also
see that, in contrast to the commutative case, the induced representations are not indecomposable here. However,
they still yield all costandard comodules. 

\subsection{The simple representations}

From the fact that  $\Oscr_{\tt nc}({{\GL_2}})$ is quasi-hereditary it follows by the general theory of these algebras that the simple comodules are of the form $L(\lambda)=\im(\Delta(\lambda)\rightarrow \nabla(\lambda))$;
this, in principle, reduces their study
to a linear algebra problem. 

This  problem is usually difficult to solve, but in this particular case it is possible. The bialgebra $\uend(A)$ is the subalgebra (by Proposition \ref{prop:subalgebra}) of $\Oscr_{\tt nc}({{\GL_2}})$ generated by $a,b,c,d$
and we have:

\begin{theorem}
Assume that $\cha(\mathbb{K})=0$. All simple $\Oscr_{\tt nc}(\GL_2)$-representations are repeated tensor products of simple $\uend(\mathbb{K}[x,y])$-representations and their duals.
\end{theorem}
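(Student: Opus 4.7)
The plan is to prove that every simple $L(\lambda)$ of $\Oscr_{\tt nc}(\GL_2)=\uaut(\mathbb{K}[x,y])$ arises as a direct summand of an iterated tensor product whose factors are simple $\uend(\mathbb{K}[x,y])$-representations and their duals.

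First, I observe that for $A=\mathbb{K}[x,y]$ the comodule $L(r_2)=R_2=\mathbb{K}(x\otimes y-y\otimes x)$ is one-dimensional and invertible, so both $L(r_2)$ (a simple of $\uend(\mathbb{K}[x,y])$ by Proposition \ref{prop:subalgebra}) and its dual $L(r_2)^{\ast}\cong L(r_2^{-1})$ — corresponding to the grouplikes $\delta,\delta^{-1}$ of Example \ref{ex:two} — are either simples of $\uend(\mathbb{K}[x,y])$ or duals of such.

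Second, for an arbitrary $\lambda\in\Lambda$, I would write $\lambda=x_1x_2\cdots x_n$ in reduced form with $x_i\in\{r_1,r_2,r_2^{-1}\}$. By the monoidality of $M:\Uscr\to\Rep_{\mathbb{K}}(\Oscr_{\tt nc}(\GL_2))$ (Theorem \ref{th:mainth}),
\[
M(\lambda)\cong M(x_1)\otimes M(x_2)\otimes\cdots\otimes M(x_n)
\]
is manifestly a tensor product of the basic building blocks listed above. By the quasi-hereditary structure of Theorem \ref{th:quasiher}, $L(\lambda)=\im(\Delta(\lambda)\to\nabla(\lambda))$ occurs as a composition factor of $M(\lambda)$.

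Third, to upgrade ``composition factor'' to ``direct summand'', I would invoke semisimplicity of $\Rep_{\mathbb{K}}(\Oscr_{\tt nc}(\GL_2))$ in characteristic zero. This should follow from the derived monoidal equivalence $M:\perf(\Uscr^{\opp})\xrightarrow{\sim}\derived^b(\Rep_{\mathbb{K}}(\uaut(A)))$ of Theorem \ref{th:mainth}, combined with the Koszul and char-$0$ regularity of $\mathbb{K}\Uscr$ (Proposition \ref{prop:comb}), which makes all higher $\Ext$'s between simples vanish. Under semisimplicity, $L(\lambda)$ is a direct summand of the tensor product $M(\lambda)$ of simples of $\uend(\mathbb{K}[x,y])$ and duals thereof.

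The main obstacle is the semisimplicity step: while it is the natural char-$0$ analog of the situation for $\GL_n$, it requires a careful verification in the noncommutative Reedy setup of $\Uscr$. As a sanity check, in the minimal non-trivial example $\lambda=r_1r_2^{-1}r_1$, a direct computation shows $M(\lambda)=V\otimes L(r_2)^{\ast}\otimes V$ is four-dimensional with a single $\theta_{1,1}$-contraction to $M(1)=\mathbb{K}$, and in characteristic $0$ splits as $L(r_1r_2^{-1}r_1)\oplus L(1)$, exhibiting the three-dimensional simple $L(r_1r_2^{-1}r_1)$ as a direct summand of $L(r_1)\otimes L(r_2)^{\ast}\otimes L(r_1)$ — precisely the kind of decomposition the theorem predicts. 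The representation ring $G_0(\Oscr_{\tt nc}(\GL_2))=\ZZ\langle r_1,r_2,r_2^{-1}\rangle$ from the Corollary in Section \ref{sec:autA-qh} lets one check such character-level equalities in general.
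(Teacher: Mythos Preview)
Your argument targets a weaker statement than the one claimed. The theorem asserts that each simple $L(\lambda)$ \emph{is} a tensor product of simple $\uend(\mathbb{K}[x,y])$-representations and their duals, not merely a direct summand of such a tensor product. Your own sanity check illustrates the gap perfectly: you exhibit the $3$-dimensional $L(r_1r_2^{-1}r_1)$ as a summand of the $4$-dimensional $V\otimes L(r_2)^*\otimes V$, and stop there. But the theorem demands that $L(r_1r_2^{-1}r_1)$ itself be written as a tensor product. The point is that the simples of $\uend(\mathbb{K}[x,y])$ are not just $V$ and $\wedge^2V$: as the paragraph following the theorem recalls, they include all $\Sym^n V$ (and more generally tensor products thereof with $\wedge^2V$). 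It is with these richer building blocks that the simples of $\Oscr_{\tt nc}(\GL_2)$ are to be expressed, and the paper explicitly says that determining \emph{which} such tensor products are simple is the ``somewhat intricate'' content deferred to \cite{MR3713007}. Your decomposition $M(\lambda)=M(x_1)\otimes\cdots\otimes M(x_n)$ with $x_i\in\{r_1,r_2,r_2^{-1}\}$ uses only the three smallest building blocks and never touches this.

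The semisimplicity step is also unjustified as written. Proposition~\ref{prop:comb} asserts Koszulity of $\mathbb{K}\Uscr_\u$ and $\mathbb{K}\Uscr_\d$, not of $\mathbb{K}\Uscr$, and in any case Koszulity of a graded category has nothing to do with vanishing of $\Ext^{>0}$ between simples in $\comod(\uaut(A))$ --- Koszul algebras typically have highly non-trivial $\Ext$-algebras. The derived equivalence of Theorem~\ref{th:mainth} transports the $t$-structure, but it does not by itself force the heart to be semisimple. Whether $\Oscr_{\tt nc}(\GL_2)$ is cosemisimple in characteristic zero is a genuine question requiring a separate argument; it does not drop out of the combinatorics of $\Uscr$ in the way you suggest.
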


The simple $\uend(\mathbb{K}[x,y])$-representations can be understood using Theorem \ref{th:kvdb}, and were considered in \cite{MR3299599}. They are tensor products of $(\Sym^n(V))_{n \in \NN}$ and $\wedge^2V$, where $V$ denotes the standard representation. Thus every simple $\Oscr_{\tt nc}(\GL_2)$-representation is a tensor product of these basic representations and their duals. It is somewhat intricate to characterize which among those tensor products are simple, but this is achieved in \cite{MR3713007}.

\section{More examples of universal Hopf algebras}
\label{sec:examples}

Various other types of universal Hopf algebras  have  also been considered in the literature, see for example \cite{MR1812048,MR2302731,MR3029329,chirvasitu2016quantum,MR3275027,MR3545505}. Most of them arise as quotients of $\uaut(A)$, for some algebra $A$. In this section we will discuss one such example  and we will also give some more comments on the commutative case
which was touched upon in Theorem \ref{th:derivedeq}.

\subsection{The universal quantum group of a non-degenerate bilinear form}
In \cite{MR1068703}, Dubois-Violette and Launer introduced the \textit{universal quantum group of a non-degenerate bilinear form}\index{universal quantum group of a non-degenerate bilinear form}. Their definition is equivalent to the following.
\begin{definition}
Given a vector space $V$ such that $1<\dim(V)<\infty$, and a non-degenerate bilinear form $b:V \otimes V \to \mathbb{K}$, the universal quantum group $H(b)$\index{$H(b)$} of $b$ is the universal Hopf algebra coacting on $V$ making $b$ into an $H(b)$-comodule morphism (for the trivial comodule structure on $\mathbb{K}$).
\end{definition}

\begin{example}
For $q \in \mathbb{K}^{\times}$, consider the bilinear form given by 
$$
b=\begin{pmatrix} 0 & 1 \\ -q^{-1} & 0 \end{pmatrix}.
$$
One then computes that $H(b)=\Oscr_q(\SL_2)$, the quantum coordinate ring of $\SL_2$ (see, for example, \cite{MR1321145}).
\end{example}

Setting $A=TV^*/(b)$, we obtain a Koszul AS-regular algebra, see \cite[Proposition 1.1]{zhang1998non}, and one verifies that there is an isomorphism of Hopf algebras 
\begin{equation}
\label{iso:slhb}
\usl(A) \cong H(b),
\end{equation}
where $\usl(A)$ was introduced in \cite[\S 8.5]{MR1016381}. 

\medskip

There is a close connection between $H(b)$ and the famous \textit{Temperley-Lieb category}\index{Temperley-Lieb category}. 
The Temperley-Lieb category is best known for its attractive graphical model, based on planar non-intersecting strands (see \cite{abramsky2007temperley}),
but abstractly it can be characterised by the following presentation
\begin{equation}
\label{eq:temperley}
\begin{aligned}
\Uscr=\langle v | 1 \xrightarrow{\phi} vv, vv \xrightarrow{\psi} 1 |& v \xrightarrow{1 \otimes \phi} vvv \xrightarrow{\psi \otimes 1} v=\id_v=v \xrightarrow{\phi \otimes 1} vvv \xrightarrow{1 \otimes \psi} v, \\
&v \xrightarrow{1 \otimes \phi} vvv \xrightarrow{1 \otimes \psi} v=v \xrightarrow{\phi \otimes 1} vvv \xrightarrow{\psi \otimes 1} v \rangle_{\otimes}.
\end{aligned}
\end{equation} 
The generators $\psi$ and $\phi$ correspond to cups and caps in the graphical model.
The third relation ensures that the ``circle'' $\eta:=\psi\phi\in \End(1)$  acts centrally on $\Uscr$.
One has 
\begin{equation}
H(b)=\coend(F_b),
\end{equation} 
where $F_b:\Uscr\r \Vect_{\mathbb{K}}$ is the monoidal functor
with $F_b(v)=V$, $F_b(\psi)=b$ and $F_b(\phi):\mathbb{K}\r V\otimes V$ being dual to $b$.

The image $q(b):=F_b(\eta)\in \End(\mathbb{K})=\mathbb{K}$  of $\eta$ under $F_b$ is called the \emph{quantum dimension}\index{quantum dimension} of $V$ (it can be zero!). 
The quantum dimension divides the space of monoidal functors $\Uscr \to \Vect_{\mathbb{K}}$ into connected components.

\begin{theorem}\cite[Theorem 1.1]{MR1998031}
\label{bichonmorita}
Let $b,b'$ be two non-degenerate bilinear forms. Then there is a monoidal equivalence
$$
\Rep_{\mathbb{K}}(H(b)) \cong \Rep_{\mathbb{K}}(H(b'))
$$
if and only if $q(b)=q(b')$.
In particular, if we choose $q\in \mathbb{K}^\times$ such that $q+q^{-1}=q(b)$, then there is an equivalence of monoidal categories 
$$
\Rep_{\mathbb{K}}(H(b)) \cong \Rep_{\mathbb{K}}(\Oscr_q(\SL_2)).
$$
\end{theorem}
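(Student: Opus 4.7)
The plan is to use the co-Morita machinery of Theorem \ref{th:dualhopf}, combined with the presentation $H(b)=\coend_{\Uscr}(F_b)$ recorded just before the theorem, where $\Uscr$ is the Temperley-Lieb category. For the "if" direction, by Remark \ref{rem:cohom} and Theorem \ref{th:dualhopf} it will suffice to exhibit a nonzero class in $\cohom_{\Uscr}(F_b,F_{b'})$ whenever $q(b)=q(b')$. For the "only if" direction one has to show that $q(b)$ is a monoidal invariant of $\Rep_{\mathbb{K}}(H(b))$. The overall reduction is thus
\[
\cohom_{\Uscr}(F_b,F_{b'})\neq 0\quad\Longleftrightarrow\quad q(b)=q(b').
\]

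For the "if" direction I would compute the cohom via the obvious two-functor variant of \eqref{eq:coend}, expressing it as the quotient of $\bigoplus_{u\in\Uscr}F_b(u)^*\otimes F_{b'}(u)$ by the compatibility relations forced by $\phi$ and $\psi$. Because $\Uscr$ is monoidally generated by $v$, $\phi$, and $\psi$, these relations are all consequences of the three local identities of \eqref{eq:temperley}; the third of these concerns the loop $\eta=\psi\circ\phi$ and imposes on any representative the scalar identity $q(b)\cdot\xi = q(b')\cdot\xi$ in the $u=1$ component. When $q(b)=q(b')$ this constraint is automatically satisfied and the compatibility system becomes internally consistent, so a nonzero class in the quotient can be produced by direct construction, for instance by transporting the coevaluation associated to $b$ to that associated to $b'$ and verifying that it survives the quotient (this is the technical core of the "if" direction, but is essentially a repackaging of Bichon's original Temperley-Lieb combinatorics).

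For the "only if" direction, the key observation is that $q(b)$ is a monoidal invariant of $\Rep_{\mathbb{K}}(H(b))$. Indeed, the fundamental comodule $V$ is self-dual via the non-degenerate form $b$, and $q(b)=F_b(\eta)\in\End(1)=\mathbb{K}$ coincides with the categorical trace of $\id_V$, computed as the composition $1\xrightarrow{\coeval}V\otimes V^*\cong V\otimes V\xrightarrow{\eval}1$. Any monoidal equivalence $\Rep_{\mathbb{K}}(H(b))\cong\Rep_{\mathbb{K}}(H(b'))$ therefore sends $V$ to an object of the same quantum dimension $q(b)$. The principal obstacle — and the step requiring the most care — is to characterise $V$ intrinsically, up to duals and invertible twists, as the essentially unique self-dual tensor generator of the category, so that its dimension must in fact coincide with $q(b')$. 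This can be done by recognising $\Rep_{\mathbb{K}}(H(b))$ as the Karoubian envelope of the Temperley-Lieb category at parameter $q(b)$, whose Grothendieck ring and Jones-Wenzl combinatorics pin down $q(b)$ canonically. Once both directions are in hand the corollary follows from a direct computation of $F_{b_q}(\eta)$ on $b_q=\bigl(\begin{smallmatrix}0&1\\-q^{-1}&0\end{smallmatrix}\bigr)$ yielding $q(b_q)=q+q^{-1}$, so that any $q\in\mathbb{K}^\times$ with $q+q^{-1}=q(b)$ gives $q(b)=q(b_q)$ and hence the claimed equivalence $\Rep_{\mathbb{K}}(H(b))\cong\Rep_{\mathbb{K}}(\Oscr_q(\SL_2))$.
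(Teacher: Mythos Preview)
The paper gives no formal proof; the result is cited from Bichon. The closest thing to a proof in the paper is the sentence immediately preceding the theorem --- ``The quantum dimension divides the space of monoidal functors $\Uscr\to\Vect_{\mathbb{K}}$ into connected components'' --- which, read through Theorem~\ref{th:dualhopf} and Remark~\ref{rem:cohom}, \emph{is} your ``if'' argument. So on that direction you and the paper are aligned, and your cohom computation is the right way to make that one sentence precise. One small correction: the constraint $q(b)\,\xi=q(b')\,\xi$ in the unit component arises from the morphism $\eta=\psi\phi:1\to1$ via the defining cohom relations for $\psi$ and $\phi$ separately, not specifically from the third relation in \eqref{eq:temperley}, which merely asserts that $\eta$ acts centrally.

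For ``only if'' the paper offers nothing beyond the citation, so there is nothing to compare against; but your sketch has a genuine gap. Calling $q(b)$ ``the categorical trace of $\id_V$'' presupposes a pivotal structure that is not part of the data; what is true is that $q(b)$ is the loop value attached to the \emph{specific} self-duality $(\psi,\phi)=(b,b^\vee)$, and this becomes independent of the choice of self-duality only after one checks $\End_{H(b)}(V)=\mathbb{K}$ (so that any two self-dualities differ by a scalar, which cancels in $\psi\phi$). More seriously, your proposed way to pin down $V$ --- identifying $\Rep_{\mathbb{K}}(H(b))$ with the Karoubi envelope of Temperley--Lieb at parameter $q(b)$ and then reading off the parameter --- is circular: establishing that identification, and that the parameter is recoverable from the abstract monoidal category, is essentially the content of the theorem you are trying to prove. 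A non-circular route would argue directly that a monoidal equivalence carries $V$ to a self-dual simple tensor generator of $\Rep_{\mathbb{K}}(H(b'))$, and that any such object has loop value $q(b')$.
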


Theorem \ref{bichonmorita} is in stark contrast with Theorem \ref{comorita}, despite the fact that \eqref{iso:slhb} seems to indicate that $\uaut(A)$ and $H(b)$ are quite close.

\subsection{Spiders and representations of $\GL(V)$}
We briefly indicate how the techniques from \S\ref{sec:repsaut} fit in well with the planar diagrammatic approach to the representation theory of algebraic groups and Lie algebras. 

Recall that in Theorem \ref{th:derivedeq} we showed that 
\begin{equation}
\Vscr=\langle \wedge^iV \mid i=1, \ldots, n \rangle_{\otimes} \subset \Rep_{\mathbb{K}}(G),
\end{equation}
for $G=\GL(V)$ and $\dim(V)=n$, gives rise to a derived equivalence
\begin{equation}
	F:\perf(\Vscr^{\opp})\r \derived^b(\Rep_{\mathbb{K}}(G)):\Vscr(-,u)\mapsto F(u).
\end{equation}
Motivated by Theorem \ref{th:mainth}, one might wonder if the category $\Vscr$ has a nice combinatorial presentation as monoidal category. 

We will assume that $\cha(\mathbb{K})=0$\footnote{In this case there is no need for derived categories: closing $\Vscr$ under direct summands recovers $\Rep_{\mathbb{K}}(G)$ since representations of $G$ are completely reducible.}, though see Remark \ref{rem:poschar}. As generating morphisms one can take the natural maps
\begin{align}
\Gamma_{i,j}&:\wedge^iV \otimes \wedge^jV \to \wedge^{i+j}V\\
\Sigma_{i,j}&:\wedge^{i+j}V \to \wedge^iV \otimes \wedge^jV.
\end{align}
The relations among the $\Gamma_{i,j}$ and $\Sigma_{i,j}$ can be determined using \textit{skew Howe duality}\index{skew Howe duality}, allowing for a planar diagrammatic description of the category $\Vscr$. A morphism between tensor products of the $\wedge^iV$ is encoded as a certain kind of oriented graph, resulting in a category called the \textit{$\GL(V)$-spider}\index{$\GL(V)$-spider}. See \cite[Theorem 3.3.1]{MR3263166} for more details on skew Howe duality and spiders.

\begin{remark}
\label{rem:poschar}
For $\cha(\mathbb{K})>0$, there is a version of skew Howe duality by Adamovich and Rybnikov~\cite{MR1390747}, but the resulting algebraic presentation is quite involved. The diagrammatic approach seems to be more flexible, see \cite{elias}. R.~Howe formulated what is now called ``Howe duality" from a unifying point of view of Lie superalgebras. For a lucid exposition of Howe duality and its further development, see \cite{leites2002howe}.
\end{remark}

\bibliography{biblio}

\providecommand{\bysame}{\leavevmode\hbox to3em{\hrulefill}\thinspace}
\providecommand{\MR}{\relax\ifhmode\unskip\space\fi MR }
\providecommand{\MRhref}[2]{%
  \href{http://www.ams.org/mathscinet-getitem?mr=#1}{#2}
}
\providecommand{\href}[2]{#2}
\begin{thebibliography}{10}

\bibitem{abramsky2007temperley}
S.~Abramsky, \emph{Temperley- lieb algebra: From knot theory to logic and
  computation via quantum mechanics}, Mathematics of Quantum Computing and
  Technology, Taylor and Francis, 2007.

\bibitem{MR1390747}
A.~M. Adamovich and G.~L. Rybnikov, \emph{Tilting modules for classical groups
  and {H}owe duality in positive characteristic}, Transform. Groups \textbf{1}
  (1996), no.~1-2, 1--34. \MR{1390747}

\bibitem{bernshtein1971structure}
IN~Bernshtein, Izrail~Moiseevich Gel'fand, and Sergei~I Gel'fand,
  \emph{Structure of representations generated by vectors of highest weight},
  Functional analysis and its applications \textbf{5} (1971), no.~1, 1--8.

\bibitem{MR1812048}
Julien Bichon, \emph{Cosovereign {H}opf algebras}, J. Pure Appl. Algebra
  \textbf{157} (2001), no.~2-3, 121--133. \MR{1812048}

\bibitem{MR1998031}
\bysame, \emph{The representation category of the quantum group of a
  non-degenerate bilinear form}, Comm. Algebra \textbf{31} (2003), no.~10,
  4831--4851. \MR{1998031}

\bibitem{MR2302731}
\bysame, \emph{Co-representation theory of universal co-sovereign {H}opf
  algebras}, J. Lond. Math. Soc. (2) \textbf{75} (2007), no.~1, 83--98.
  \MR{2302731}

\bibitem{MR3029329}
Julien Bichon and Michel Dubois-Violette, \emph{The quantum group of a
  preregular multilinear form}, Lett. Math. Phys. \textbf{103} (2013), no.~4,
  455--468. \MR{3029329}

\bibitem{MR3263166}
Sabin Cautis, Joel Kamnitzer, and Scott Morrison, \emph{Webs and quantum skew
  {H}owe duality}, Math. Ann. \textbf{360} (2014), no.~1-2, 351--390.
  \MR{3263166}

\bibitem{chirvasitu2016quantum}
Alexandru Chirvasitu, Chelsea Walton, and Xingting Wang, \emph{On quantum
  groups associated to a pair of preregular forms}, arXiv preprint
  arXiv:1605.06428 (2016).

\bibitem{MR654325}
Pierre Deligne, James~S. Milne, Arthur Ogus, and Kuang-yen Shih, \emph{Hodge
  cycles, motives, and {S}himura varieties}, Lecture Notes in Mathematics, vol.
  900, Springer-Verlag, Berlin-New York, 1982. \MR{654325}

\bibitem{MR1211481}
Vlastimil Dlab and Claus~Michael Ringel, \emph{The module theoretical approach
  to quasi-hereditary algebras}, Representations of algebras and related topics
  ({K}yoto, 1990), London Math. Soc. Lecture Note Ser., vol. 168, Cambridge
  Univ. Press, Cambridge, 1992, pp.~200--224. \MR{1211481}

\bibitem{MR1068703}
Michel Dubois-Violette and Guy Launer, \emph{The quantum group of a
  nondegenerate bilinear form}, Phys. Lett. B \textbf{245} (1990), no.~2,
  175--177. \MR{1068703}

\bibitem{elias}
B.~Elias, \emph{Light ladders and clasp conjectures}, Preprint:
  arXiv:1510.06840.

\bibitem{MR1810480}
Pavel Etingof and Shlomo Gelaki, \emph{Isocategorical groups}, Internat. Math.
  Res. Notices (2001), no.~2, 59--76. \MR{1810480}

\bibitem{MR0232821}
Pierre Gabriel, \emph{Des cat{\'e}gories ab{\'e}liennes}, Bull. Soc. Math.
  France \textbf{90} (1962), 323--448. \MR{0232821}

\bibitem{MR3169570}
J.-W. He, B.~Torrecillas, F.~Van~Oystaeyen, and Y.~Zhang, \emph{Dualizing
  complexes of {N}oetherian complete algebras via coalgebras}, Comm. Algebra
  \textbf{42} (2014), no.~1, 271--285. \MR{3169570}

\bibitem{humphreys2008representations}
James~E Humphreys, \emph{Representations of semisimple lie algebras in the bgg
  category o}, vol.~94, American Mathematical Soc., 2008.

\bibitem{MR1932664}
Masaki Izumi and Hideki Kosaki, \emph{On a subfactor analogue of the second
  cohomology}, Rev. Math. Phys. \textbf{14} (2002), no.~7-8, 733--757,
  Dedicated to Professor Huzihiro Araki on the occasion of his 70th birthday.
  \MR{1932664}

\bibitem{MR1321145}
Christian Kassel, \emph{Quantum groups}, Graduate Texts in Mathematics, vol.
  155, Springer-Verlag, New York, 1995. \MR{1321145}

\bibitem{krause2001spectrum}
Henning Krause, \emph{The spectrum of a module category}, vol. 707, American
  Mathematical Soc., 2001.

\bibitem{MR3299599}
Benoit Kriegk and Michel Van~den Bergh, \emph{Representations of
  non-commutative quantum groups}, Proc. Lond. Math. Soc. (3) \textbf{110}
  (2015), no.~1, 57--82. \MR{3299599}

\bibitem{leites2002howe}
Dimitry Leites and Irina Shchepochkina, \emph{The {H}owe duality and {L}ie
  superalgebras}, arXiv preprint math/0202181 (2002).

\bibitem{MR1016381}
Yu.~I. Manin, \emph{Quantum groups and noncommutative geometry}, Universit{\'e}
  de Montr{\'e}al, Centre de Recherches Math{\'e}matiques, Montreal, QC, 1988.
  \MR{1016381}

\bibitem{manin-second}
Yu.I. Manin, \emph{Quantum groups and noncommutative geometry}, with a
  contribution by Raedschelders, T. and Van den Bergh, M., CRM Short Courses,
  Springer, (forthcoming).

\bibitem{MR1054234}
Olivier Mathieu, \emph{Filtrations de {$G$}-modules}, C. R. Acad. Sci. Paris
  S{\'e}r. I Math. \textbf{309} (1989), no.~6, 273--276. \MR{1054234}

\bibitem{MR3275027}
Colin Mrozinski, \emph{Quantum groups of {$\rm GL(2)$} representation type}, J.
  Noncommut. Geom. \textbf{8} (2014), no.~1, 107--140. \MR{3275027}

\bibitem{MR1289425}
Bodo Pareigis, \emph{Endomorphism bialgebras of diagrams and of noncommutative
  algebras and spaces}, Advances in {H}opf algebras ({C}hicago, {IL}, 1992),
  Lecture Notes in Pure and Appl. Math., vol. 158, Dekker, New York, 1994,
  pp.~153--186. \MR{1289425}

\bibitem{thesis}
Theo Raedschelders, \emph{Manin's universal hopf algebras and highest weight
  categories}, Ph.D. thesis, Free University of Brussels (VUB), 2017.

\bibitem{MR3570144}
Theo Raedschelders and Michel Van~den Bergh, \emph{The {M}anin {H}opf algebra
  of a {K}oszul {A}rtin-{S}chelter regular algebra is quasi-hereditary}, Adv.
  Math. \textbf{305} (2017), 601--660. \MR{3570144}

\bibitem{MR3713007}
\bysame, \emph{The representation theory of non-commutative {$\Cal O({\rm
  GL}_2)$}}, J. Noncommut. Geom. \textbf{11} (2017), no.~3, 845--885.
  \MR{3713007}

\bibitem{MR1408508}
Peter Schauenburg, \emph{Hopf bi-{G}alois extensions}, Comm. Algebra
  \textbf{24} (1996), no.~12, 3797--3825. \MR{1408508}

\bibitem{MR1388568}
S.~Paul Smith, \emph{Some finite-dimensional algebras related to elliptic
  curves}, Representation theory of algebras and related topics ({M}exico
  {C}ity, 1994), CMS Conf. Proc., vol.~19, Amer. Math. Soc., Providence, RI,
  1996, pp.~315--348. \MR{1388568}

\bibitem{MR1371143}
Darin~R. Stephenson and James~J. Zhang, \emph{Growth of graded {N}oetherian
  rings}, Proc. Amer. Math. Soc. \textbf{125} (1997), no.~6, 1593--1605.
  \MR{1371143}

\bibitem{MR0472967}
Mitsuhiro Takeuchi, \emph{Morita theorems for categories of comodules}, J. Fac.
  Sci. Univ. Tokyo Sect. IA Math. \textbf{24} (1977), no.~3, 629--644.
  \MR{0472967}

\bibitem{MR3338683}
Michel Van~den Bergh, \emph{Calabi-{Y}au algebras and superpotentials}, Selecta
  Math. (N.S.) \textbf{21} (2015), no.~2, 555--603. \MR{3338683}

\bibitem{MR3545505}
Chelsea Walton and Xingting Wang, \emph{On quantum groups associated to
  non-{N}oetherian regular algebras of dimension 2}, Math. Z. \textbf{284}
  (2016), no.~1-2, 543--574. \MR{3545505}

\bibitem{zhang1998non}
James~J. Zhang, \emph{Non-noetherian regular rings of dimension 2}, Proceedings
  of the American Mathematical Society (1998), 1645--1653.

\end{thebibliography}
\bibliographystyle{amsplain}

\end{document}